\DeclarePairedDelimiter\floor{\lfloor}{\rfloor}
\def\R{{\mathbb R}}
\def \Sy{{\mathbb{S}}}
\def\conv{{\rm conv}}
\def\1_xi{\frac{1}{\xi}}
\def\C{{\mathcal C}}
\def\D{{\mathcal D}}
\def\H{{\mathcal H}}
\def\J{{\mathcal J}}
\def\01{\ensuremath{0\mathord{-}1}}
\newcommand{\st} {{\rm s.t.}}
\newtheorem{proposition}{Proposition}
\newtheorem{theorem}{Theorem}
\newtheorem{example}{Example}
\begin{document}



\title{SDP-quality bounds via convex quadratic relaxations for global optimization of mixed-integer quadratic programs}


\author{Carlos J. Nohra\thanks{Department of Chemical Engineering,
        Carnegie Mellon University
        (\texttt{cnohrakh@andrew.cmu.edu}).}
		\and Arvind U. Raghunathan\thanks{Mitsubishi Electric Research Laboratories
        (\texttt{raghunathan@merl.com})}.        
        \and Nikolaos V. Sahinidis\thanks{School of Chemical and Biomolecular Engineering, Georgia Institute of Technology
        (\texttt{nikos@gatech.edu}).}}

\maketitle

\begin{abstract}
We consider the global optimization of nonconvex mixed-integer quadratic programs with linear equality constraints. In particular, we present a new class of convex quadratic relaxations which are derived via quadratic cuts. To construct these quadratic cuts, we solve a separation problem involving a linear matrix inequality with a special structure that allows the use of specialized solution algorithms. Our quadratic cuts are nonconvex, but define a convex feasible set when  intersected with the equality constraints. We show that our relaxations are an outer-approximation of a semi-infinite convex program which under certain conditions is equivalent to a well-known semidefinite program relaxation. The new relaxations are implemented in the global optimization solver BARON, and tested by conducting numerical experiments on a large collection of problems. Results demonstrate that, for our test problems, these relaxations lead to a significant improvement in the performance of BARON.

\end{abstract}


%
%

\section{Introduction}
\label{intro}

We address the global optimization of problems of the form:
\begin{equation}
\label{problem_statement}
\begin{array}{cl}
 \underset{x \in \R^n}{\text{min}}\; &x^T Q x + q^T x\\
 \st           & A x = b \\
               & x_i \in S_i, \; \forall i \in [n] :=\{1, \dots, n\}
\end{array}
\end{equation}
where $Q \in \R^{n \times n}$ is a symmetric matrix which may be indefinite, $q \in {\R}^n$, $A \in {\R}^{m \times n}$ and $b \in {\R}^m$. For each $i \in [n]$, we assume that $S_i$ is a bounded set given by the union of finitely many closed intervals in $\R$.

The formulation in~\eqref{problem_statement} subsumes many classes of problems such as nonconvex quadratic programs (QPs) and mixed-integer quadratic programs (MIQPs), which  arise in applications including facility location and quadratic assignment~\cite{kb:57}, molecular conformation~\cite{pr:94} and max-cut problems~\cite{gw95}.

State-of-the-art global optimization solvers rely on branch-and-bound algorithms to solve~\eqref{problem_statement} to global optimality. The efficiency of these algorithms depends to a large extent on the tightness and the computational cost of the relaxations solved for lower bounding. Commonly used relaxations for~\eqref{problem_statement} are polyhedral relaxations constructed via factorable programming~\cite{mc76,sw01,ts:comp:04} and reformulation-linearization techniques (RLT)~\cite{sa99}, semi-definite programming (SDP) relaxations~\cite{bst:11:mp,bw:13,s:87}, and convex quadratic relaxations which can be obtained via separable programming~\cite{pgr:87}, d.c. programming~\cite{t:95}, or quadratic convex reformulation methods~\cite{bel:13}.

In a recent paper~\cite{nrs:20}, we derived convex quadratic relaxations of~\eqref{problem_statement} by convexifing the objective function through uniform diagonal perturbations of $Q$. These perturbations were constructed by solving eigenvalue and generalized eigenvalue problems involving $Q$ and $A$. Through numerical experiments, we demonstrated that these relaxations are not only inexpensive to solve, but can also provide very tight bounds, significantly improving the performance of branch-and-bound algorithms. Motivated by these results, in the current paper, we consider a related class of convex quadratic relaxations. In particular, we investigate quadratically constrained programming (QCP) relaxations for~\eqref{problem_statement}. These relaxations are derived via quadratic cuts obtained from nonuniform diagonal perturbations of $Q$. We show that these relaxations are at least as tight as the spectral relaxations in~\cite{nrs:20}, and provide a very good approximation of the bounds given by certain SDP relaxations of~\eqref{problem_statement}.

An SDP relaxation for~\eqref{problem_statement} obtained after adding RLT inequalities was considered by Saxena et al.~\cite{sbl:11}, who proposed a procedure to project the feasible region of this relaxation onto the space of the original variables. This projection relies on convex quadratic cuts derived from an SDP separation program which is solved by applying a sub-gradient-based algorithm. Even though the relaxations generated through this approach are nearly as tight as the original SDP formulation, the separation problem used to derive the quadratic cuts can be very expensive to solve.

Dong~\cite{d:16} used a different SDP relaxation for~\eqref{problem_statement} which only includes the diagonal RLT inequalities and showed that this SDP is equivalent to a particular semi-infinite program. This semi-infinite formulation served as a motivation to construct convex QCP relaxations for~\eqref{problem_statement} via convex quadratic cuts derived from a semidefinite separation problem. To ensure that the solution of the separation problem is finitely attained, Dong~\cite{d:16} proposed a regularized version of this semidefinite program, and demonstrated that it can be solved very efficiently through a specialized coordinate descent algorithm. Our work is partially inspired by the ideas proposed by Dong~\cite{d:16}. We refine his approach in several directions and make various theoretical and algorithmic contributions.

Our first contribution is a new class of convex QCP relaxations for~\eqref{problem_statement} constructed by using information from both $Q$ and the equality constraints $Ax=b$. These relaxations are derived from a semi-infinite program that generalizes the semi-infinite programming formulation proposed in~\cite{d:16}. Under our approach, we use the matrix $A^TA$ in order to modify the semidefinite constraint of the separation problem solved in~\cite{d:16}. This modification allows us to construct convex QCP relaxations which are at least as tight as those considered in~\cite{d:16}. Unlike the quadratic cuts introduced in~\cite{d:16}, the quadratic cuts obtained with our approach are nonconvex. However, as we show in \S\ref{sec:sicp_relaxations}, these nonconvex quadratic cuts define a convex feasible set when intersected with the equality constraints $Ax=b$, resulting in a convex relaxation of~\eqref{problem_statement}.

In our second contribution, we provide conditions under which our semi-infinite programming formulation is equivalent to a well-known SDP relaxation of~\eqref{problem_statement}. Moreover, we show that this SDP is the best relaxation in the class of SDP relaxations considered in this paper.

In our third contribution, we present a new analysis of the separation problem used to derive our quadratic cuts. In particular, we provide results on the finite attainment of this SDP by using its dual formulation. These results also apply to the separation problem in~\cite{d:16}, which is a special case of ours.

In our fourth contribution, we propose a new regularization approach for the semidefinite separation problem and modify the coordinate descent algorithm introduced in~\cite{d:16} accordingly. Through numerical experiments, we show that the quadratic cuts derived from our regularized separation problem provide a much better approximation of certain SDP bounds than the quadratic cuts obtained from the regularized separation problem proposed in~\cite{d:16}.

In order to assess the computational benefits of the proposed techniques, we implement the new quadratic relaxations in the global optimization solver BARON. These relaxations are incorporated into BARON's portfolio of relaxations and invoked according to a dynamic relaxation selection strategy introduced in~\cite{nrs:20}. We test our implementation on a large set of problems. Numerical results show that the new quadratic relaxations lead to a significant improvement in the performance of BARON, resulting in a new version of this solver which outperforms other state-of-the-art solvers such as CPLEX and GUROBI for many of our test problems.

The remainder of this paper is organized as follows. In \S\ref{convex_qcp_relaxations} we introduce the relaxations considered in this paper and investigate their theoretical properties. In \S\ref{separation_problem_analysis}, we provide a new analysis on the finite attainment of the semidefinite separation problem and present our regularization approach. In \S\ref{coordinate_minimization_algorithm} we introduce the version of coordinate minimization algorithm used to solve our regularized separation problem. This is followed by a description of our implementation in \S\ref{implementation}. In \S\ref{computational_results}, we present an extensive computational study which investigates the effectiveness of our regularization approach, the impact of the proposed relaxations on the performance of BARON, and the relative performance of several state-of-the-art global optimization solvers. Finally, in \S\ref{conclusions} we present conclusions from this work.

In what follows, we denote by $\R$ and $\R_{\geq 0}$, the set of real numbers and nonnegative real numbers, respectively. We use $\mathbbm{1} \in \R^n$ to denote a vector of ones. For a matrix $A \in \R^{m \times n}$, we use $A_{i \cdot}$, $A_{\cdot j}$ and $A_{ij}$ to denote its $i$-th row, $j$-th column and $(i,j)$-th entry, respectively. Let $M, N \in {\Sy}^{n}$, where $\Sy^{n}$ is the set of $n \times n$ real, symmetric matrices, and $N \succ 0$. We use $\lambda_{\text{min}} (M)$ to represent the smallest eigenvalue of $M$. Similarly, we denote by $\lambda_{\text{min}} (M, N)$ the smallest generalized eigenvalue of the problem $M v = \lambda N v$, where $v \in {\R}^{n}$. The inner product between matrices $M, P \in \Sy^{n}$ is denoted by $\langle M, P \rangle = \sum_{i=1}^n\sum_{j=1}^n M_{ij}P_{ij}$. Throughout this paper, we assume that $A$ has rank $m$ and use $Z \in \R^{n \times (n-m)}$ to denote an orthonormal basis for the nullspace of $A$.

\section{Construction and theoretical analysis of convex quadratic relaxations}
\label{convex_qcp_relaxations}

\subsection{A family of semi-infinite programming relaxations}
\label{sec:sicp_relaxations}

We start by considering the following reformulation of~\eqref{problem_statement}:
\begin{equation}
\label{reformulation_da_2}
\left.
\begin{array}{cl}
 \underset{x, y, v}{\text{min}}\;\; & v + q^T x \\
\st\;\;   & v \geq x^T \left( Q + \text{diag}(d) \right) x - d^T y + \alpha \|Ax - b\|^2 \\
		  & A x = b \\
		  & (x_i, y_i) \in \C_i, \; \forall i \in [n]       
\end{array}
\right\rbrace
\end{equation}
where $y \in \R^n$, $v \in \R$, $d \in \R^n$ is a vector used to perturb the diagonal entries of $Q$, $\alpha \in \R_{\geq 0}$, and $\C_i:= \{ (x_i, y_i) \in \R^2 : x_i \in S_i, \; y_i = x_i^2\}$. Define $L_i := \text{min} \{s \in \R : s \in S_i \}$ and $U_i := \text{max} \{s \in \R : s \in S_i \}$. It is simple to show that the convex hull of $\C_i$ is given by (see Proposition 1 in~\cite{d:16}):
\begin{equation}
\label{convex_hull_C}
\begin{array}{cl}
\conv (\C_i) = \{ (x_i, y_i) \in \R^2 : L_i \leq x_i \leq U_i, \; l_i(x_i) \leq y_i \leq u_i(x_i)\}          
\end{array}
\end{equation}
where $l_i(\cdot)$ is the tightest convex extension of $x_i^2$ when $x_i$ is restricted to $S_i$ (see \cite{ts02} for convex extensions) and $u_i(\cdot)$ is the concave envelope of $x_i^2$ over $[L_i, U_i]$. By replacing $\C_i$ with $\conv (\C_i)$ in~\eqref{reformulation_da_2}, we obtain the following relaxation of~\eqref{problem_statement}:
\begin{equation}
\left.
\label{relaxation_da}
\begin{array}{cl}
 \underset{(x, y) \in {\cal F}, v}{\text{min}}\;\; & v + q^T x  \\
\st\;\;   & v \geq x^T \left( Q + \text{diag}(d) \right) x - d^T y + \alpha \|Ax - b\|^2
\end{array}
\right\rbrace
\end{equation}
where ${\cal F} = \{x, y \in  {\R}^{n} : A x = b, \,\,\, (x_i, y_i) \in \conv (\C_i), \; \forall i \in [n] \}$. Let $\D_{\alpha} := \{d \in \R^n : Q + \text{diag}(d) + \alpha A^T A \succcurlyeq 0 \}$. Clearly, \eqref{relaxation_da} is a convex problem for any vector $d \in \D_{\alpha}$. By considering all such vectors, we obtain the following semi-infinite convex program (SICP):
\begin{subequations}
\label{sicp_da}
\begin{align}
 \underset{(x, y) \in {\cal F}, v}{\text{min}}\;\; & v + q^T x \label{sicp_da_obj} \\
\st\;\;   & v \geq x^T \left( Q + \text{diag}(d) \right) x - d^T y + \alpha \|Ax - b\|^2, \; \forall d \in \D_{\alpha}. \label{sicp_da_cuts}
\end{align}
\end{subequations}

Since any solution feasible in~\eqref{reformulation_da_2}, is feasible in~\eqref{sicp_da} as well, this SICP also is a relaxation of~\eqref{problem_statement}. To illustrate this, let $(\bar{x}, \bar{y}, \bar{v})$ be feasible to~\eqref{reformulation_da_2}. For each $i \in [n]$, we have $(\bar{x}_i, \bar{y}_i) \in \C_i \subseteq \conv (\C_i)$. Moreover, since $\bar{y}_i = \bar{x}_i^2, \; \forall i \in [n]$, and $A \bar{x} = b$, we have $\bar{v} = \bar{x}^T \left( Q + \text{diag}(d) \right) \bar{x} - d^T \bar{y} + \alpha \|A\bar{x} - b\|^2 = \bar{x}^T Q \bar{x}$. By replacing $(\bar{x}, \bar{y}, \bar{v})$ in~\eqref{sicp_da_cuts}, it is easy to see this inequality is satisfied $\forall d \in \D_{\alpha}$.

The quadratic term $\alpha \|Ax - b\|^2$ in~\eqref{sicp_da_cuts} vanishes for any $x$ feasible in~\eqref{sicp_da}. As a result, we can drop this term from~\eqref{sicp_da_cuts} in order to obtain the following simplified version of~\eqref{sicp_da}:
\begin{subequations}
\label{sicp_das}
\begin{align}
 \underset{(x, y) \in {\cal F}, v}{\text{{min}}}\;\; & v + q^T x \label{sicp_das_obj} \\
\st\;\;   & v \geq x^T \left( Q + \text{{diag}}(d) \right) x - d^T y, \; \forall d \in \D_{\alpha}. \label{sicp_das_cuts}
\end{align}
\end{subequations}
The quadratic cuts~\eqref{sicp_das_cuts} are not necessarily convex because, for $d \in \R^n$ and $\alpha \in \R_{\geq 0}$, the condition $Q + \text{diag}(d) + \alpha A^T A \succcurlyeq 0$ does not necessarily imply that $Q + \text{diag}(d) \succcurlyeq 0$. However, the quadratic cuts~\eqref{sicp_das_cuts} need not be convex in order for~\eqref{sicp_das} to be a convex optimization problem. As we show in the next proposition, the quadratic cuts~\eqref{sicp_das_cuts} define a convex feasible set when intersected with the constraints $Ax = b$, making~\eqref{sicp_das} a convex relaxation of~\eqref{problem_statement}.
\begin{proposition}
\label{sicp_da_convexity}
The semi-infinite program~\eqref{sicp_das} is a convex optimization problem.
\end{proposition}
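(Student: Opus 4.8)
The plan is to show that the feasible set of~\eqref{sicp_das} is convex, which suffices since the objective is linear. The key observation is that on the affine subspace $\{x : Ax = b\}$, every cut~\eqref{sicp_das_cuts} can be rewritten using a positive semidefinite quadratic form. Fix $d \in \D_\alpha$, so $Q + \text{diag}(d) + \alpha A^TA \succcurlyeq 0$. For any $x$ with $Ax = b$ we have $\|Ax - b\|^2 = 0$, hence $x^T(Q + \text{diag}(d))x = x^T(Q + \text{diag}(d) + \alpha A^TA)x - \alpha\|Ax-b\|^2 = x^T(Q + \text{diag}(d) + \alpha A^TA)x$. Therefore, restricted to $Ax = b$, the cut~\eqref{sicp_das_cuts} is equivalent to
\begin{equation*}
v \geq x^T\bigl(Q + \text{diag}(d) + \alpha A^TA\bigr)x - d^Ty,
\end{equation*}
whose right-hand side is a convex function of $(x,y)$ because $Q + \text{diag}(d) + \alpha A^TA \succcurlyeq 0$ and the term $-d^Ty$ is linear.

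First I would make this precise by writing the feasible region of~\eqref{sicp_das} as the intersection over all $d \in \D_\alpha$ of the sets
\begin{equation*}
G_d := \bigl\{(x,y,v) : Ax = b,\ (x_i,y_i) \in \conv(\C_i)\ \forall i,\ v \geq x^T(Q + \text{diag}(d))x - d^Ty\bigr\}.
\end{equation*}
Since an arbitrary intersection of convex sets is convex, and $\F$ (together with the lifting to $v$) is convex by~\eqref{convex_hull_C}, it is enough to show each $G_d$ is convex. Using the identity above, $G_d$ coincides with $\{(x,y,v) : Ax = b,\ (x_i,y_i)\in\conv(\C_i)\ \forall i,\ v \geq x^T(Q + \text{diag}(d) + \alpha A^TA)x - d^Ty\}$, and in this description the constraint function $(x,y,v) \mapsto x^T(Q + \text{diag}(d) + \alpha A^TA)x - d^Ty - v$ is convex, so its sublevel set at $0$ is convex; intersecting with the convex set $\F$ (lifted) preserves convexity.

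The only subtlety, and the step I would be most careful about, is the direction of the set equality $G_d = \widetilde{G}_d$ where $\widetilde{G}_d$ uses the perturbed matrix: one must verify that the substitution is valid only \emph{after} imposing $Ax = b$, so the rewriting is performed within the constraint $Ax=b$ already present in the definition of $G_d$. Since every point of $G_d$ satisfies $Ax = b$ and hence $\alpha\|Ax-b\|^2 = 0$, the two quadratic forms agree pointwise on $G_d$ (and on $\widetilde G_d$), giving $G_d = \widetilde G_d$ as claimed. No boundedness or closedness hypotheses are needed beyond what is already assumed. I would also remark that this argument does not require the individual cuts to be convex as functions on all of $\R^n$ — indeed they generally are not, since $Q + \text{diag}(d)$ may be indefinite — which is precisely the phenomenon highlighted before the proposition.
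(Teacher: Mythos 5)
Your overall strategy is sound and genuinely different from the paper's: rather than projecting onto the nullspace of $A$ (which is what the paper does, showing $Z^T(Q+\text{diag}(d))Z \succcurlyeq 0$ after substituting $x = \hat{x} + Zx_z$), you try to rewrite each cut on the slice $\{Ax=b\}$ so that its right-hand side becomes a globally convex function of $(x,y)$ and then invoke closure of convexity under intersection. However, the key identity you use is wrong as written: $x^T(\alpha A^TA)x = \alpha\|Ax\|^2$, not $\alpha\|Ax-b\|^2$, so for $x$ with $Ax=b$ you get $x^T\bigl(Q+\text{diag}(d)+\alpha A^TA\bigr)x = x^T\bigl(Q+\text{diag}(d)\bigr)x + \alpha\|b\|^2$, not equality of the two quadratic forms. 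Consequently the asserted set identity $G_d = \widetilde{G}_d$ fails whenever $\alpha > 0$ and $b \neq 0$: the set $\widetilde{G}_d$ imposes the strictly stronger inequality $v \geq x^T(Q+\text{diag}(d))x + \alpha\|b\|^2 - d^Ty$ on the feasible slice, so $\widetilde{G}_d \subsetneq G_d$ in general, and convexity of $\widetilde{G}_d$ does not by itself give convexity of $G_d$.

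The fix is immediate and preserves your approach: replace the matrix form by the full penalty term, i.e.\ rewrite the cut as $v \geq x^T(Q+\text{diag}(d))x + \alpha\|Ax-b\|^2 - d^Ty$. This added term genuinely vanishes on $\{Ax=b\}$, so the feasible set is unchanged (this is exactly the passage from~\eqref{sicp_da} back to~\eqref{sicp_das} that the paper performs in the other direction), and the function $x \mapsto x^T(Q+\text{diag}(d))x + \alpha\|Ax-b\|^2$ is convex on all of $\R^n$ because its Hessian is $2\bigl(Q+\text{diag}(d)+\alpha A^TA\bigr) \succcurlyeq 0$ for $d \in \D_\alpha$; equivalently, it differs from $x^T(Q+\text{diag}(d)+\alpha A^TA)x$ only by an affine function of $x$, so your convexity conclusion for each sublevel set survives. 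With that correction your argument is complete and is arguably more elementary than the paper's, since it avoids introducing the basis $Z$; the paper's nullspace argument, on the other hand, yields the slightly stronger structural fact that $Z^T(Q+\text{diag}(d))Z \succcurlyeq 0$, which is reused later (e.g.\ in Proposition~\ref{sicp_da_tightest_relaxation} and Theorem~\ref{sdp_sicp_equivalence}).
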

\begin{proof}
This proof relies on the projection of the feasible set of~\eqref{sicp_das} onto the nullspace of $A$. Let $\H = \{x \in \R^n : Ax = b \}$. Clearly, any point satisfying $Ax = b$ can be expressed as $x = \hat{x} + Z x_z$, where $\hat{x} \in \H$ and $x_z \in \R^{n-m}$. By using this transformation, \eqref{sicp_das} can be equivalently written as:
\begin{subequations}
\label{sicp_das_proj}
\begin{align}
 \underset{x_z, y, v}{\text{min}}\;\; & v + q^T \left(\hat{x} + Z x_z\right) \label{sicp_das_proj_obj} \\
\st\;\;   & v \geq {\left(\hat{x} + Z x_z\right)}^T \left( Q + \text{diag}(d) \right) \left(\hat{x} + Z x_z\right) - d^T y, \; \forall d \in \D_{\alpha} \label{sicp_das_proj_c1} \\
		  & L_i \leq \hat{x}_i + e_i^T Z x_z \leq U_i, \; \forall i \in [n] \label{sicp_das_proj_c2} \\
		  & l_i\left(\hat{x}_i + e_i^T Z x_z\right) \leq y_i \leq u_i\left(\hat{x}_i + e_i^TZx_z\right), \; \forall i \in [n]. \label{sicp_das_proj_c3}
\end{align}
\end{subequations}
To prove that~\eqref{sicp_das_proj} is convex, it suffices to show that $Z^T \left(Q + \text{diag}(d)\right) Z$ is positive semidefinite. By definition, any vector $d \in \D_{\alpha}$ satisfies:
\begin{equation}
\label{semidefinite_condition_1}
\begin{array}{cl}
w^T \left(Q + \text{diag}(d) + \alpha A^T A \right) w \geq 0, \; \forall w \in \R^n .
\end{array}
\end{equation}
Let $w = Z w_z$, where $w_z \in \R^{n-m}$. For this choice of $w$, \eqref{semidefinite_condition_1} becomes
\begin{equation}
\label{semidefinite_condition_2}
\begin{array}{cl}
w_z^T Z^T \left(Q + \text{diag}(d) \right) Z w_z \geq 0, \; w_z \in \R^{n-m} .
\end{array}
\end{equation}
Clearly, \eqref{semidefinite_condition_2} holds for all vectors $w_z \in \R^{n-m}$. Hence, $Z^T \left(Q + \text{diag}(d)\right) Z$ is positive semidefinite for any $d \in \D_{\alpha}$. This completes the proof.
\end{proof}

By setting $\alpha = 0$ in~\eqref{sicp_das}, we obtain the following SICP relaxation of~\eqref{problem_statement} which was considered in~\cite{d:16}:
\begin{subequations}
\label{sicp_d}
\begin{align}
 \underset{(x, y) \in {\cal F}, v}{\text{min}}\;\; & v + q^T x \label{sicp_d_obj} \\
\st\;\;   & v \geq x^T \left( Q + \text{diag}(d) \right) x - d^T y, \; \forall d \in \D_{0} \label{sicp_d_cuts}
\end{align}
\end{subequations}
where $\D_{0} := \{d \in \R^n : Q + \text{diag}(d) \succcurlyeq 0 \}$. The formulation in~\eqref{sicp_d} served as a motivation in~\cite{d:16} to develop an algorithm to construct convex relaxations for~\eqref{problem_statement} by using a finite number of quadratic cuts of the form~\eqref{sicp_d_cuts}. As we demonstrate in \S\ref{sec:cutting_surface_algorithm}, a similar algorithm can be devised based on the SICP~\eqref{sicp_das}.

In~\eqref{sicp_das}, the set $\D_{\alpha}$ is parameterized by the scalar $\alpha$. An interesting question that arises in this context is how we can choose $\alpha$ to obtain the tightest relaxation of the form~\eqref{sicp_das}. This question is addressed by the following proposition.
\begin{proposition}
\label{sicp_da_tightest_relaxation}

Let $\alpha_1$ and $\alpha_2$ be real scalars such that $0 \leq \alpha_1 \leq \alpha_2$. Denote by $\mu_{\emph{\text{SICPda1}}}$ and $\mu_{\emph{\text{SICPda2}}}$ the optimal objective function values in the SICP~\eqref{sicp_das} for $\alpha_1$ and $\alpha_2$, respectively. Define $\D_{\infty} := \{ d \in \R^n : Z^T(Q + \textnormal{\text{diag}}(d))Z \succcurlyeq 0 \}$. Then, the following holds:

\begin{enumerate}[(i)]
\item $\mu_{\emph{\text{SICPda2}}} \geq \mu_{\emph{\text{SICPda1}}}$.
\item The tightest relaxation of the form~\eqref{sicp_das} is obtained when $\alpha \to\infty$.
\item $\lim_{\alpha \rightarrow \infty} \D_{\alpha} = \D_{\infty}$.
\end{enumerate}

\end{proposition}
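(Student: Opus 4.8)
The plan is to establish the three claims in order, since (i) drives (ii), and (iii) is a separate (essentially topological) fact about the sets $\D_\alpha$. For claim (i), the key observation is that the family of sets $\{\D_\alpha\}_{\alpha\geq 0}$ is \emph{nested}: if $0\leq\alpha_1\leq\alpha_2$ and $d\in\D_{\alpha_1}$, then $Q+\mathrm{diag}(d)+\alpha_1 A^TA\succcurlyeq 0$, and since $(\alpha_2-\alpha_1)A^TA\succcurlyeq 0$, adding it preserves positive semidefiniteness, so $d\in\D_{\alpha_2}$. Hence $\D_{\alpha_1}\subseteq\D_{\alpha_2}$. Now recall that in both SICPs~\eqref{sicp_das} the quadratic term $\alpha\|Ax-b\|^2$ was dropped because it vanishes on the feasible set; consequently the \emph{constraints} in~\eqref{sicp_das} for parameter $\alpha_2$ are exactly the constraints for $\alpha_1$ together with the additional cuts indexed by $d\in\D_{\alpha_2}\setminus\D_{\alpha_1}$. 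More cuts means a smaller feasible region, hence a larger (or equal) minimum: $\mu_{\text{SICPda2}}\geq\mu_{\text{SICPda1}}$. I would phrase this as: any $(x,y,v)$ feasible for the $\alpha_2$-problem is feasible for the $\alpha_1$-problem, so the $\alpha_1$-minimum is taken over a superset and is therefore no larger.

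For claim (ii), the nestedness from (i) shows $\alpha\mapsto\mu_{\text{SICPda}}(\alpha)$ is nondecreasing and bounded above (by the optimal value of~\eqref{problem_statement}, since every $\D_\alpha\subseteq\D_\infty$ as I argue next, and all these are relaxations), so the supremum is approached as $\alpha\to\infty$; I would state the ``tightest relaxation'' as the limiting SICP obtained by using the cut family indexed by $\D_\infty$, and note that this limit is itself a valid relaxation because for any $d\in\D_\infty$ the cut $v\geq x^T(Q+\mathrm{diag}(d))x-d^Ty$ is still valid at feasible points of~\eqref{problem_statement} (the argument in the paragraph after~\eqref{sicp_da} only used $y_i=x_i^2$ and $Ax=b$, not membership in any particular $\D_\alpha$), and the convexity argument of Proposition~\ref{sicp_da_convexity} goes through verbatim since it used precisely the condition $Z^T(Q+\mathrm{diag}(d))Z\succcurlyeq 0$, i.e. $d\in\D_\infty$.

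For claim (iii), I would prove the two inclusions. First, $\D_\alpha\subseteq\D_\infty$ for every finite $\alpha$: given $d\in\D_\alpha$, set $w=Zw_z$ in $w^T(Q+\mathrm{diag}(d)+\alpha A^TA)w\geq 0$; since $AZ=0$ this reduces to $w_z^TZ^T(Q+\mathrm{diag}(d))Zw_z\geq 0$, which is exactly $d\in\D_\infty$ — this is the computation already carried out in the proof of Proposition~\ref{sicp_da_convexity}. Combined with the nestedness, $\bigcup_{\alpha\geq 0}\D_\alpha\subseteq\D_\infty$ and the union is an increasing limit. For the reverse direction I would show $\D_\infty\subseteq\bigcup_{\alpha\geq 0}\D_\alpha$, i.e. every $d$ with $Z^T(Q+\mathrm{diag}(d))Z\succcurlyeq 0$ lies in $\D_\alpha$ for all sufficiently large $\alpha$. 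Decompose $\R^n=\mathrm{range}(Z)\oplus\mathrm{range}(A^T)$ (orthogonal, since $Z$ is an orthonormal nullspace basis of $A$). For $w=Zw_z+A^Tw_a$ write $M:=Q+\mathrm{diag}(d)$ and expand $w^T(M+\alpha A^TA)w$ as the block form
\begin{equation}
\begin{bmatrix} w_z \\ w_a \end{bmatrix}^T
\begin{bmatrix} Z^TMZ & Z^TMA^T \\ A M Z & A M A^T + \alpha\, (AA^T)^2 \end{bmatrix}
\begin{bmatrix} w_z \\ w_a \end{bmatrix}.
\end{equation}
The $(1,1)$ block is positive semidefinite by hypothesis, and as $\alpha\to\infty$ the $(2,2)$ block becomes positive definite and dominates the off-diagonal coupling; a standard Schur-complement / block-dominance argument then gives that the whole matrix is positive semidefinite for $\alpha$ large enough, so $d\in\D_\alpha$. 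This establishes $\D_\infty=\bigcup_{\alpha\geq 0}\D_\alpha=\lim_{\alpha\to\infty}\D_\alpha$.

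The main obstacle is the reverse inclusion in (iii): showing that a fixed $d\in\D_\infty$ actually enters some $\D_\alpha$, rather than merely that the sets shrink down to $\D_\infty$ from outside. The delicate point is that positive semidefiniteness of the $(1,1)$ block alone is not enough — one needs the off-diagonal block $Z^TMA^T$ to be controlled, and the Schur complement $Z^TMZ - Z^TMA^T\,(AMA^T+\alpha(AA^T)^2)^{-1}\,AMZ$ must stay $\succcurlyeq 0$; this requires a uniform-in-$w_z$ estimate showing the correction term vanishes as $\alpha\to\infty$, which is where the bulk of the careful (but routine) linear-algebra work lies. If $d$ lies on the boundary of $\D_\infty$ (so $Z^TMZ$ is merely PSD, possibly singular), one must check the argument does not need strict definiteness of the $(1,1)$ block — it does not, because the Schur correction is itself of the form (something) times $1/\alpha$ and can be absorbed, but this needs to be stated with care.
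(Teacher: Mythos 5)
Your treatment of (i) and (ii) is correct and essentially identical to the paper's: $\D_{\alpha_1}\subseteq\D_{\alpha_2}$ because adding $(\alpha_2-\alpha_1)A^TA\succcurlyeq 0$ preserves positive semidefiniteness, so the $\alpha_2$-problem imposes a superset of cuts and has a no-smaller optimal value, and (ii) follows.

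The gap is in (iii). You read $\lim_{\alpha\to\infty}\D_\alpha$ as the increasing union $\bigcup_{\alpha\ge 0}\D_\alpha$ and assert the reverse inclusion $\D_\infty\subseteq\bigcup_{\alpha\ge 0}\D_\alpha$, i.e.\ that every $d$ with $Z^T(Q+\text{diag}(d))Z\succcurlyeq 0$ lies in $\D_\alpha$ for some finite $\alpha$. This is false on the boundary of $\D_\infty$, which is exactly the case you flag and then dismiss. Concretely, take the data of Example~\ref{example_attainment}: $Q=\begin{bsmallmatrix} 0 & \;\;\;2 \\ 2 & \;-1\end{bsmallmatrix}$, $A=\begin{bsmallmatrix} 0 & 1 \end{bsmallmatrix}$, so $Z=\begin{bsmallmatrix} 1 & 0 \end{bsmallmatrix}^T$ and $Z^T(Q+\text{diag}(d))Z=d_1$. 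The point $d=(0,0)$ belongs to $\D_\infty$, yet $Q+\text{diag}(d)+\alpha A^TA=\begin{bsmallmatrix} 0 & 2 \\ 2 & \alpha-1\end{bsmallmatrix}$ has determinant $-4$ for every $\alpha$ and is never positive semidefinite; equivalently, the Schur complement you propose to control equals $-4/(\alpha-1)$, which tends to $0$ but remains negative for all $\alpha>1$ and cannot be ``absorbed,'' because the $(1,1)$ block vanishes on a direction where the off-diagonal block does not. So $\bigcup_{\alpha\ge 0}\D_\alpha$ is in general a proper subset of $\D_\infty$ (one can show $\D_\infty$ is its closure, e.g.\ by perturbing $d$ to $d+\epsilon\mathbbm{1}$, which makes $Z^T(Q+\text{diag}(d)+\epsilon I)Z\succ 0$, and then applying the Finsler/Debreu lemma, which genuinely requires strict definiteness on the nullspace). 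The paper avoids this issue entirely by giving $\lim_{\alpha\to\infty}\D_\alpha$ a weaker, pointwise meaning: it shows only that for each fixed $\bar d\in\D_\infty$ and each fixed $w\in\R^n$ the quantity $w^T(Q+\text{diag}(\bar d)+\alpha A^TA)w$ has a nonnegative limit as $\alpha\to\infty$ (the limit is $+\infty$ if $w$ has a nonzero component in $\text{range}(A^T)$, and equals $w_z^TZ^T(Q+\text{diag}(\bar d))Zw_z$ otherwise). Your argument would need to be restated in that pointwise sense, or restricted to the interior of $\D_\infty$, in order to go through.
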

\begin{proof}

We start with the proof of (i). Let $\D_{\alpha_1}$ and $\D_{\alpha_2}$ be the sets of diagonal perturbations parameterized by $\alpha_1$ and $\alpha_2$, respectively. To prove the claim in (i), it suffices to show that $\D_{\alpha_1} \subseteq \D_{\alpha_2}$. Let $\bar{d} \in \D_{\alpha_1}$. By definition, $\bar{d}$ satisfies:
\begin{equation}
\label{semidefinite_condition_3}
\begin{array}{cl}
w^T \left(Q + \text{diag}(\bar{d}) + \alpha_1 A^T A \right) w \geq 0, \; \forall w \in \R^n .
\end{array}
\end{equation}
As $A^T A \succcurlyeq 0 $ and $\alpha_2 - \alpha_1 \geq 0$, we have that $(\alpha_2 - \alpha_1) w^T A^T A w \geq 0, \; \forall w \in \R^n$. This condition combined with~\eqref{semidefinite_condition_3} implies
\begin{equation}
\label{semidefinite_condition_4}
\begin{array}{cl}
w^T \left(Q + \text{diag}(\bar{d}) + \alpha_2 A^T A \right) w \geq 0, \; \forall w \in \R^n .
\end{array}
\end{equation}
It follows that $\bar{d} \in \D_{\alpha_2}$. Hence, $\D_{\alpha_1} \subseteq \D_{\alpha_2}$, which completes the proof of (i). The claim in (ii) follows directly from (i). To prove (iii), it suffices to show that, for any $\bar{d} \in \D_{\infty}$, the following condition holds:
\begin{equation}
\label{define_limit_alpha2infty}
    \lim\limits_{\alpha \rightarrow \infty} 
    w^T \left(Q + \text{diag}(\bar{d}) + \alpha A^TA \right) w \geq 0, \; \forall w \in \R^n .
\end{equation}
Clearly, any $w \in \R^n$ can be written as $w = w_A + Z w_z$, where $w_A \in \text{range}(A^T)$ and $w_z \in \R^{n-m}$. Suppose that $w_A \neq 0$. Then, $w^T A^T A w = w_A^T A^T A w_A > 0$, and it is easy to show that~\eqref{define_limit_alpha2infty} holds in the limit as $\alpha \rightarrow \infty$. Now, assume that $w_A = 0$. Since $AZ = 0$, the left-hand side of~\eqref{define_limit_alpha2infty} reduces to $w_z^T Z^T\left( Q + \text{diag}(\bar{d}) \right) Z w_z$, which is nonnegative because $\bar{d} \in D_{\infty}$. This proves the claim in (iii).
\end{proof}

A direct consequence of Proposition~\ref{sicp_da_tightest_relaxation}(i) is that, for any $\alpha > 0$, the bound provided by~\eqref{sicp_das} is at least as large as that given by~\eqref{sicp_d}.

\subsection{Relationship between the semi-infinite and semi-definite formulations}
\label{sec:sicp_sdp_relationship}

In~\cite{d:16}, it was shown that the SICP~\eqref{sicp_d} is equivalent to the following SDP relaxation of~\eqref{problem_statement} (see Section 2 in~\cite{d:16} for details):
\begin{subequations}
\label{sdp_d}
\begin{align}
\underset{(x, y) \in {\cal F}, X}{\text{min}}\;\; & \langle Q, X \rangle + q^T x \label{sdp_d_obj} \\
\st\;\;     & X - x x^T \succcurlyeq 0 \label{sdp_d_c1} \\
 			& X_{ii} = y_i, \; \forall i \in [n]. \label{sdp_d_c2}
\end{align}
\end{subequations}
Motivated by this result, in this section we investigate the relationship between the SICP~\eqref{sicp_das} and the following SDP relaxation of~\eqref{problem_statement}:
\begin{subequations}
\label{sdp_da}
\begin{align}
\underset{(x,y) \in {\cal F}, X}{\text{min}}\;\; & \langle Q, X \rangle + q^T x \label{sdp_da_obj} \\
\st\;\;     & X - x x^T \succcurlyeq 0 \label{sdp_da_c1} \\
 			& X_{ii} = y_i, \; \forall i \in [n] \label{sdp_da_c2} \\
 			& \langle A^T A, X \rangle - 2 (A^T b)^T x + b^T b  = 0. \label{sdp_da_c3}
\end{align}
\end{subequations}

We start by showing that, for any $\alpha \geq 0$, the optimal solution of the SICP~\eqref{sicp_das} is bounded by the optimal solutions of the SDPs~\eqref{sdp_d} and~\eqref{sdp_da}.

\begin{proposition}
\label{relationship_sdp_da_sicp_da}
Assume that $\alpha \geq 0$ in~\eqref{sicp_das}. Denote by $\mu_{\emph{\text{SICPda}}}$, $\mu_{\emph{\text{SDPd}}}$ and $\mu_{\emph{\text{SDPda}}}$ the optimal objective function values in~\eqref{sicp_das}, \eqref{sdp_d} and~\eqref{sdp_da}, respectively. Then, $\mu_{\emph{\text{SDPd}}} \leq \mu_{\emph{\text{SICPda}}} \leq \mu_{\emph{\text{SDPda}}}$.
\end{proposition}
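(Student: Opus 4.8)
The plan is to prove the two bounds separately. The lower bound $\mu_{\text{SDPd}} \le \mu_{\text{SICPda}}$ is immediate from results already in hand: writing $\mu_{0}$ for the optimal value of~\eqref{sicp_d}, which is exactly~\eqref{sicp_das} with $\alpha = 0$, the equivalence established in~\cite{d:16} gives $\mu_{0} = \mu_{\text{SDPd}}$, while Proposition~\ref{sicp_da_tightest_relaxation}(i) applied with $\alpha_1 = 0$ and $\alpha_2 = \alpha$ gives $\mu_{0} \le \mu_{\text{SICPda}}$; chaining these yields the claim.

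For the upper bound $\mu_{\text{SICPda}} \le \mu_{\text{SDPda}}$ I would argue by a feasible-point (weak-duality) construction. Let $(x,y,X)$ be any point feasible in~\eqref{sdp_da} and set
\[
v^{\star} := \sup_{d \in \D_{\alpha}} \Big( x^T\big(Q + \text{diag}(d)\big)x - d^T y \Big).
\]
Since $\D_{\alpha} \supseteq \D_{0} \neq \emptyset$, the supremum is over a nonempty set, and it is finite (bounded above by $\langle Q,X\rangle$ via the estimate below, and below by its value at any fixed $d \in \D_{\alpha}$). By construction $(x,y,v^{\star})$ satisfies the cut family~\eqref{sicp_das_cuts}, and combined with $(x,y) \in \F$ this makes $(x,y,v^{\star})$ feasible for~\eqref{sicp_das}. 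Hence $\mu_{\text{SICPda}} \le v^{\star} + q^T x$, and the whole bound follows once we show $v^{\star} \le \langle Q, X\rangle$ and take the infimum over all feasible $(x,y,X)$ of~\eqref{sdp_da}.

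To bound $v^{\star}$, I would use $X_{ii} = y_i$ to rewrite, for every $d \in \R^n$,
\[
x^T\big(Q + \text{diag}(d)\big)x - d^T y = \langle Q, xx^T\rangle + \langle \text{diag}(d),\, xx^T - X\rangle = \langle Q, X\rangle - \big\langle Q + \text{diag}(d),\; X - xx^T \big\rangle,
\]
which reduces the claim to $\langle Q + \text{diag}(d),\, X - xx^T\rangle \ge 0$ for all $d \in \D_{\alpha}$. This is where constraint~\eqref{sdp_da_c3} enters: it can be rewritten as $\langle A^TA,\, X - xx^T\rangle = -\|Ax - b\|^2$, and since $(x,y) \in \F$ forces $Ax = b$, it collapses to $\langle A^TA,\, X - xx^T\rangle = 0$. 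Consequently, for any $d \in \D_{\alpha}$,
\[
\big\langle Q + \text{diag}(d),\, X - xx^T\big\rangle = \big\langle Q + \text{diag}(d) + \alpha A^TA,\; X - xx^T\big\rangle \ge 0,
\]
where the equality uses $\langle A^TA, X - xx^T\rangle = 0$ and the inequality uses $Q + \text{diag}(d) + \alpha A^TA \succcurlyeq 0$ (the definition of $\D_{\alpha}$) together with $X - xx^T \succcurlyeq 0$ (constraint~\eqref{sdp_da_c1}). Taking the supremum over $d \in \D_{\alpha}$ gives $v^{\star} \le \langle Q, X\rangle$, as needed.

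The main obstacle is precisely this last cancellation. One must recognize that the equality $Ax = b$ built into $\F$ collapses~\eqref{sdp_da_c3} to $\langle A^TA, X - xx^T\rangle = 0$, and then combine the two semidefiniteness facts in the right order so that the $\alpha A^TA$ term drops out; without~\eqref{sdp_da_c3} this term would survive and $\langle Q + \text{diag}(d), X - xx^T\rangle$ could be negative for some $d \in \D_{\alpha}$, since $\D_{\alpha}$ is defined only through $Q + \text{diag}(d) + \alpha A^TA \succcurlyeq 0$ rather than $Q + \text{diag}(d) \succcurlyeq 0$. Everything else is routine inner-product bookkeeping.
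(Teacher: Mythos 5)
Your proposal is correct and follows essentially the same route as the paper: the lower bound via Proposition~\ref{sicp_da_tightest_relaxation}(i) together with the equivalence $\mu_{\text{SICPd}} = \mu_{\text{SDPd}}$ from~\cite{d:16}, and the upper bound via the key inequality $\langle Q + \text{diag}(d) + \alpha A^TA,\, X - xx^T\rangle \geq 0$ combined with constraint~\eqref{sdp_da_c3} and $Ax = b$ to cancel the $\alpha A^TA$ term. Your packaging of the feasible point through the supremum $v^{\star}$ is a cosmetic variation of the paper's direct verification that $\langle Q, \bar{X}\rangle$ satisfies every cut.
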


\begin{proof}

We start by proving that $\mu_{{\text{SDPd}}} \leq \mu_{{\text{SICPda}}}$. Let $\mu_{{\text{SICPd}}}$ be the optimal solution of the SICP~\eqref{sicp_d}. Since $\alpha \geq 0$, Proposition~\ref{sicp_da_tightest_relaxation}(i) implies that $\mu_{{\text{SICPd}}} \leq \mu_{{\text{SICPda}}}$. By Theorem 1 in~\cite{d:16} we have that $\mu_{{\text{SICPd}}} = \mu_{{\text{SDPd}}}$. Hence, $\mu_{{\text{SDPd}}} \leq \mu_{{\text{SICPda}}}$. To prove that $\mu_{{\text{SICPda}}} \leq \mu_{{\text{SDPda}}}$, it suffices to show that for any $(\bar{x}, \bar{y}, \bar{X})$ feasible in~\eqref{sdp_da} and $\bar{d} \in \D_{\alpha}$, the following condition holds:
\begin{equation}
\label{relationship_sdp_da_sicp_da_1}
\langle Q, \bar{X} \rangle \geq \bar{x}^T (Q + \text{diag}(\bar{d})) \bar{x} - \bar{d}^T \bar{y}.
\end{equation}
To this end, consider the following inequality:
\begin{equation}
\label{relationship_sdp_da_sicp_da_2}
\langle Q + \text{diag}(\bar{d}) + \alpha A^T A , \bar{X} - \bar{x}\bar{x}^T \rangle \geq 0
\end{equation}
which is valid by the feasibility of $\bar{x}$ and $\bar{X}$ in~\eqref{sdp_da} and the self-duality of the positive semi-definite cone. This inequality can be equivalently written as:
\begin{equation}
\label{relationship_sdp_da_sicp_da_3}
\langle Q, \bar{X} \rangle \geq \bar{x}^T (Q + \text{diag}(\bar{d})) \bar{x} - \sum_{i = 1}^{n} d_i \bar{X}_{ii} - \alpha \langle A^T A, \bar{X} \rangle + \alpha \bar{x}^T A^T A \bar{x}.
\end{equation}
From the feasibility of $(\bar{x}, \bar{y}, \bar{X})$ in~\eqref{sdp_da}, it follows that $\bar{X}_{ii} = \bar{y}_i$, $\forall i \in [n]$, and $\langle A^T A, \bar{X} \rangle = 2 b^T A \bar{x} - b^T b$. Then, the inequality in~\eqref{relationship_sdp_da_sicp_da_3} becomes:
\begin{equation}
\label{relationship_sdp_da_sicp_da_4}
\langle Q, \bar{X} \rangle \geq \bar{x}^T (Q + \text{diag}(\bar{d})) \bar{x} - \bar{d}^T \bar{y} + \alpha {\left( A\bar{x} - b \right)}^T \left( A\bar{x} - b \right).
\end{equation}
Since $A\bar{x} - b = 0$, \eqref{relationship_sdp_da_sicp_da_4} is equivalent to~\eqref{relationship_sdp_da_sicp_da_1}, which completes the proof.
\end{proof}

Now, we consider the case in which $\alpha \rightarrow \infty$ in the SICP~\eqref{sicp_das}. By Proposition~\ref{sicp_da_tightest_relaxation}(iii), the resulting SICP can be written as:
\begin{subequations}
\label{sicp_da_inf}
\begin{align}
 \underset{(x, y) \in {\cal F}, v}{\text{{min}}}\;\; & v + q^T x \label{sicp_da_inf_obj} \\
\st\;\;   & v \geq x^T \left( Q + \text{{diag}}(d) \right) x - d^T y, \; \forall d \in \D_{\infty}. \label{sicp_da_inf_cuts}
\end{align}
\end{subequations}
Proposition~\ref{sicp_da_tightest_relaxation}(ii) implies that~\eqref{sicp_da_inf} is the tightest relaxation of the form~\eqref{sicp_das} that can be constructed for~\eqref{problem_statement}. Moreover, from Proposition~\ref{relationship_sdp_da_sicp_da}, we know that~\eqref{sdp_da} provides an upper bound on the optimal solution of~\eqref{sicp_da_inf}. Therefore, an important question is the existence of conditions under which these two relaxations are equivalent. This question is addressed in the remainder of this section.

In~\cite{d:16}, the equivalence between the SICP~\eqref{sicp_d} and the SDP~\eqref{sdp_d} was established by applying strong duality to the SDP~\eqref{sdp_d}. However, unlike the SDP~\eqref{sdp_d}, the SDP~\eqref{sdp_da} does not admit a strictly feasible solution. To illustrate this, note that, for any $x$ satisfying $Ax=b$, \eqref{sdp_da_c3} can be equivalently written as:
\begin{subequations}
\label{sdp_da_eq_soln}
\begin{align}
 & \langle A^T A, X \rangle - {\left(2 A^T b \right)}^T x + b^T b + \langle A^T A, xx^T \rangle - \langle A^T A, xx^T \rangle = 0 \label{sdp_da_eq_soln_c1} \\
\implies & \langle A^T A, X - xx^T \rangle = 0 \label{sdp_da_eq_soln_c2}
\end{align}
\end{subequations}
which implies that $X - xx^T$ cannot be positive definite for the pairs $(x,X)$ that are feasible in~\eqref{sdp_da}. It follows that we cannot apply strong duality to~\eqref{sdp_da}. As a result, in order to show the equivalence between the SICP~\eqref{sicp_da_inf} and the SDP~\eqref{sdp_da}, we will rely on an auxiliary SDP which we will derive from~\eqref{sdp_da}.

We begin by characterizing the set of symmetric matrices $X \in \Sy^{n}$ which are feasible in~\eqref{sdp_da}.

\begin{proposition}
\label{X_characterization}
Let the $\bar{x} \in \R^{n}$ and $\bar{X} \in \Sy^{n}$ be feasible in the {SDP}~\eqref{sdp_da}. Then $\bar{X}$ has the form $\bar{X} = \bar{x}\bar{x}^T + Z \bar{W}_z Z^T$, where $\bar{W}_z \in \Sy^{n-m}$ and $\bar{W}_z \succeq 0$.
\end{proposition}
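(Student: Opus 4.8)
The plan is to work with the single matrix $M := \bar{X} - \bar{x}\bar{x}^T$ and extract from feasibility in~\eqref{sdp_da} exactly two facts: first, $M \succeq 0$ by~\eqref{sdp_da_c1}; second, $\langle A^TA, M \rangle = 0$, which is precisely the content of the manipulation leading to~\eqref{sdp_da_eq_soln_c2} (here one uses both the defining equation~\eqref{sdp_da_c3} and $A\bar{x} = b$, so it is an equality, not merely an inequality). The core of the argument is then the standard linear-algebra fact that a positive semidefinite matrix which is annihilated, in the trace inner product, by another positive semidefinite matrix must have its range contained in the kernel of the latter.

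Concretely, I would first rewrite $\langle A^TA, M \rangle = \text{tr}(A M A^T)$. Since $M \succeq 0$, take a spectral decomposition $M = \sum_{k} \lambda_k v_k v_k^T$ with $\lambda_k > 0$ and $\{v_k\}$ orthonormal. Then $0 = \text{tr}(A M A^T) = \sum_k \lambda_k \left\| A v_k \right\|^2$, and since every term is nonnegative this forces $A v_k = 0$ for each $k$; that is, every $v_k$ lies in the nullspace of $A$. Because $Z$ is an orthonormal basis for $\text{null}(A)$, we may write $v_k = Z u_k$ for some $u_k \in \R^{n-m}$. Substituting back gives $M = Z \left( \sum_k \lambda_k u_k u_k^T \right) Z^T = Z \bar{W}_z Z^T$, where $\bar{W}_z := \sum_k \lambda_k u_k u_k^T$ is visibly symmetric and positive semidefinite. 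Recalling the definition of $M$ yields $\bar{X} = \bar{x}\bar{x}^T + Z \bar{W}_z Z^T$, as claimed.

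There is essentially no obstacle here; the proof is short once the two feasibility facts are isolated. If one prefers to avoid the spectral decomposition, an equivalent route is to observe that $\text{tr}(A M^{1/2} (A M^{1/2})^T) = \text{tr}(A M A^T) = 0$ forces $A M^{1/2} = 0$, hence $\text{range}(M) = \text{range}(M^{1/2}) \subseteq \text{null}(A) = \text{range}(Z)$; any symmetric matrix with range inside $\text{range}(Z)$ has the form $Z \bar{W}_z Z^T$, and $\bar{W}_z \succeq 0$ follows since $Z$ has full column rank and $M \succeq 0$. Either way, the only point requiring a moment's care is the passage from $\langle A^TA, M\rangle \geq 0$ (self-duality of the PSD cone) to the strict equality $\langle A^TA, M\rangle = 0$, which is supplied by~\eqref{sdp_da_c3} together with $A\bar{x}=b$.
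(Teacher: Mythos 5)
Your proof is correct and follows essentially the same route as the paper: write $\bar{X}-\bar{x}\bar{x}^T$ as a PSD matrix, use~\eqref{sdp_da_c3} together with $A\bar{x}=b$ to get $\langle A^TA, \bar{X}-\bar{x}\bar{x}^T\rangle = 0$, and then use the spectral decomposition to force every eigenvector with positive eigenvalue into the nullspace of $A$. Your write-up is in fact slightly more explicit than the paper's in the last step, where you construct $\bar{W}_z = \sum_k \lambda_k u_k u_k^T$ from $v_k = Z u_k$ rather than merely asserting the form $Z\bar{W}_zZ^T$.
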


\begin{proof}

For any $(\bar{x},\bar{X})$ feasible in~\eqref{sdp_da} we can express $\bar{X}$ as $\bar{X} = \bar{x} \bar{x}^T + \widetilde{X}$ with $\widetilde{X} \in \Sy^{n}$ and $\widetilde{X} \succeq 0$. As mentioned previously, the constraint \eqref{sdp_da_c3} implies that $\langle A^T A, \bar{X} - \bar{x} \bar{x}^T \rangle = 0$, which can be equivalently written as $\langle A^TA, \widetilde{X} \rangle = 0$. Let the eigenvalue decomposition of $\widetilde{X}$ be given by $\widetilde{X} = \sum_{i=1}^p \lambda_i v_iv_i^T$, where $p \leq n$. Since $\widetilde{X} \succeq 0$, we have that $\lambda_i > 0 ,\; \forall i =1, \ldots, p$. By using this decomposition in $\langle A^TA, \widetilde{X} \rangle = 0$ we obtain 
\begin{subequations}
\label{X_characterization_condition}
\begin{align}
 & \langle A^TA , \widetilde{X} \rangle = \sum_{i=1}^p \lambda_i ||Av_i\|^2 = 0 \label{X_characterization_condition_c1} \\
\implies & Av_i = 0, \; \forall i =1, \ldots, p. \label{X_characterization_condition_c2}
\end{align}
\end{subequations}
This shows that $\widetilde{X}$ is of the form $Z \bar{W_z} Z^T$ with $\bar{W}_z \in \Sy^{n-m}$ and $\bar{W_z} \succeq 0$.
\end{proof}

Now, let $\H = \{x \in \R^n : Ax = b \}$. Clearly, any point satisfying $Ax = b$ can be expressed as 
\begin{equation}
\label{x_sdp_da}
\begin{aligned}
x = \hat{x} + Z x_z
\end{aligned}
\end{equation}
where $\hat{x} \in \H$ and $x_z \in \R^{n-m}$. From Proposition~\ref{X_characterization} we know that any $X$ feasible in~\eqref{sdp_da} is of the form ${X} = {x}{x}^T + Z {W}_z Z^T$, where ${W}_z \in \Sy^{n-m}$ and ${W}_z \succeq 0$. By using~\eqref{x_sdp_da} in the expression for $X$ we obtain $X = \hat{x}\hat{x}^T + \hat{x}(Z x_z)^T + (Z x_z)\hat{x}^T + Z x_z x_z^T Z^T + Z W_z Z^T$. Define $X_z := W_z + x_z x_z^T$ with $X_z \in \Sy^{n-m}$. Then $X$ can be expressed as
\begin{equation}
\label{X_sdp_da}
\begin{aligned}
X = \hat{x}\hat{x}^T + \hat{x}(Z x_z)^T + (Z x_z)\hat{x}^T + Z X_z Z^T.
\end{aligned}
\end{equation}
By substituting~\eqref{x_sdp_da} and~\eqref{X_sdp_da} in~\eqref{sdp_da}, we can cast this SDP as follows:
\begin{subequations}
\label{sdp_daz}
\begin{align}
\underset{x_z, y, X_z}{\text{min}}\;\; & \left\langle Q, Z X_z Z^T \right\rangle + \left(2Q\hat{x} + q\right)^T (Zx_z) + \hat{x}^TQ\hat{x} + q^T\hat{x}\label{sdp_da_z_obj} \\
\st\;\;     & X_z - x_z x_z^T \succcurlyeq 0 \label{sdp_daz_c1} \\
 			& \hat{x}_i^2 + 2\hat{x}_i \left( e_i^TZx_z \right) + e_i^TZX_zZ^Te_i = y_i, \; \forall i \in [n] \label{sdp_daz_c2} \\
 			& L_i \leq \hat{x}_i + e_i^T Z x_z \leq U_i, \; \forall i \in [n] \label{sdp_daz_c3} \\
 			& l_i\left(\hat{x}_i + e_i^T Z x_z\right) \leq y_i \leq u_i\left(\hat{x}_i + e_i^TZx_z\right), \; \forall i \in [n]. \label{sdp_daz_c4}
\end{align}
\end{subequations}
As we show later in this section, under certain conditions, the SDP in~\eqref{sdp_daz} admits a strictly feasible solution. Based on this key observation, in the next theorem we rely on strong duality holding for~\eqref{sdp_daz} in order to show the equivalence between the SICP~\eqref{sicp_da_inf} and the SDP~\eqref{sdp_da}.

\begin{theorem}
\label{sdp_sicp_equivalence}
Let $\mu_{\textnormal{SDPda}}$ and $\mu_{\textnormal{SICPda}\infty}$ denote the optimal objective function values in~\eqref{sdp_da} and~\eqref{sicp_da_inf}, respectively. Assume that the {SDP}~\eqref{sdp_daz} admits a strictly feasible solution. Then, $\mu_{\textnormal{SDPda}} = \mu_{\textnormal{SICPda}\infty}$.
\end{theorem}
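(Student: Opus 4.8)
The plan is to establish the equality $\mu_{\textnormal{SDPda}} = \mu_{\textnormal{SICPda}\infty}$ by a two-sided inequality. One direction, $\mu_{\textnormal{SICPda}\infty} \leq \mu_{\textnormal{SDPda}}$, is already available from Proposition~\ref{relationship_sdp_da_sicp_da} (taking $\alpha \to \infty$, or directly from the fact that the cuts indexed by $\D_\infty$ are a subset of the valid inequalities for the SDP feasible set). So the work is in proving $\mu_{\textnormal{SICPda}\infty} \geq \mu_{\textnormal{SDPda}}$, i.e., every feasible point of the SICP~\eqref{sicp_da_inf} has objective value at least $\mu_{\textnormal{SDPda}}$. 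Equivalently, I want to show that the ``best quadratic cut'' available at a given $(x,y) \in \F$ already certifies the SDP bound.

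The key idea is to fix a point $(\bar x, \bar y) \in \F$ with associated reduced variable $\bar x_z$ (via $\bar x = \hat x + Z\bar x_z$), consider the separation problem that asks whether $(\bar x, \bar y)$ violates some cut~\eqref{sicp_da_inf_cuts}, and show its optimal value equals the ``reduced SDP'' value. Concretely, the smallest value of $v$ compatible with $(\bar x, \bar y)$ in~\eqref{sicp_da_inf} is $\sup_{d \in \D_\infty}\bigl(\bar x^T(Q+\textnormal{diag}(d))\bar x - d^T\bar y\bigr)$; I want to argue this supremum equals the optimal value of the inner SDP in the reduced space, namely $\inf\{\langle Q, ZX_zZ^T\rangle + (2Q\hat x + q)^T(Z\bar x_z) + \hat x^TQ\hat x : X_z - \bar x_z\bar x_z^T \succeq 0,\ e_i^TZX_zZ^Te_i = \bar y_i - \hat x_i^2 - 2\hat x_i e_i^TZ\bar x_z\}$, by SDP strong duality. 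This is where the hypothesis that~\eqref{sdp_daz} admits a strictly feasible solution enters: restricting attention to a neighborhood of the Slater point, the relevant sliced SDP (with $x_z$ fixed near $\bar x_z$ and the equality constraints~\eqref{sdp_daz_c2} encoding $\bar y$) is also strictly feasible, so strong duality holds and there is no duality gap; its dual is precisely the maximization over diagonal multipliers $d$ subject to $Z^T(Q+\textnormal{diag}(d))Z \succeq 0$, which is $\D_\infty$ after recalling Proposition~\ref{sicp_da_tightest_relaxation}(iii). Then minimizing over the outer variables $x_z$ (and $y$) on both sides and interchanging the resulting $\min$–$\sup$ (justified by the convexity established in Proposition~\ref{sicp_da_convexity} together with compactness coming from boundedness of $\F$) yields $\mu_{\textnormal{SICPda}\infty} = \mu_{\textnormal{SDPdaz}} = \mu_{\textnormal{SDPda}}$, the last equality because~\eqref{sdp_daz} is an exact reformulation of~\eqref{sdp_da} via~\eqref{x_sdp_da}–\eqref{X_sdp_da}.

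Carrying this out in order: (1) record the easy inequality $\mu_{\textnormal{SICPda}\infty} \leq \mu_{\textnormal{SDPda}}$ from Proposition~\ref{relationship_sdp_da_sicp_da}; (2) rewrite~\eqref{sicp_da_inf} in the reduced coordinates $(x_z, y, v)$ using the transformation~\eqref{x_sdp_da}, obtaining the inner constraint $v \geq (\hat x + Zx_z)^T(Q+\textnormal{diag}(d))(\hat x + Zx_z) - d^Ty$ for all $d \in \D_\infty$, and note that only the reduced Hessian $Z^T(Q+\textnormal{diag}(d))Z \succeq 0$ matters for feasibility; (3) for fixed $(x_z, y)$, identify the optimal $v$ as the value of a finite-dimensional SDP dual and invoke strong duality using the Slater assumption on~\eqref{sdp_daz}; (4) take the minimum over $(x_z, y)$ and identify the result with $\mu_{\textnormal{SDPdaz}}$; (5) conclude via the exactness of the reformulation~\eqref{sdp_daz}. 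The main obstacle I anticipate is step (3)–(4): one must set up the sliced/parametrized SDP carefully so that the Slater condition for the full problem~\eqref{sdp_daz} transfers to the slice at the relevant $(x_z, y)$ (or argue the exchange of $\min$ and $\sup$ directly via a minimax/Sion-type argument on the convex SICP), and one must handle the fact that the constraints~\eqref{sdp_daz_c4} on $y$ and the range constraints~\eqref{sdp_daz_c3} on $x_z$ are inequalities present on the SDP side but enter the SICP only through membership in $\F$ — this requires checking that the dual of the sliced SDP does not pick up extra multipliers beyond the diagonal $d$, which is exactly what makes the dual feasible region equal to $\D_\infty$ rather than something larger.
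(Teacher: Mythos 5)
Your overall architecture is right --- both directions reduce to duality for the reduced SDP~\eqref{sdp_daz}, and your easy inequality $\mu_{\textnormal{SICPda}\infty}\leq\mu_{\textnormal{SDPda}}$ matches the paper's --- but the key step of your hard direction has a genuine gap. You need strong duality for the \emph{sliced} SDP obtained by freezing $(x_z,y)$ at an arbitrary feasible $(\bar x_z,\bar y)$, and you propose to get it by transferring the Slater point of the full problem~\eqref{sdp_daz} to the slice. That transfer fails: the slice carries the equality constraints $e_i^TZ(X_z-\bar x_z\bar x_z^T)Z^Te_i=\bar y_i-\bar x_i^2$, so whenever $\bar y_i=\bar x_i^2$ for some $i$ the matrix $X_z-\bar x_z\bar x_z^T$ is forced to be singular and the slice admits no strictly feasible point, no matter how nice the Slater point of~\eqref{sdp_daz} is. These are exactly the problematic points the paper identifies in \S\ref{separation_problem_analysis} (the set $P=\{i:\bar y_i=\bar x_i^2\}$ and Example~\ref{example_attainment}), and they cannot be excluded from the SICP feasible set. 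You flag this as ``the main obstacle'' but do not resolve it, so the argument is incomplete as written. (The min--sup interchange you then invoke is beside the point: if the per-point identity held, the pointwise bound $v+q^Tx\geq\mu_{\textnormal{SDPdaz}}$ would already give the result with no interchange.)

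The paper sidesteps this by dualizing \emph{globally} rather than slice by slice: it forms the partial Lagrangian dual of~\eqref{sdp_daz} with respect to the constraints~\eqref{sdp_daz_c2} only, uses the Slater assumption once to obtain zero gap \emph{and dual attainment}, and thereby produces a single multiplier $d^*\in\D_{\infty}$ for which the one-cut problem~\eqref{dsdp_daz_3} already has value $\mu_{\textnormal{SDPdaz}}$; since~\eqref{sicp_da_inf} contains that cut among infinitely many, $\mu_{\textnormal{SICPda}\infty}\geq\mu_{\textnormal{SDPdaz}}=\mu_{\textnormal{SDPda}}$. If you want to keep your per-point route, the constraint qualification you should invoke lives on the \emph{dual} ($d$-)side: the LMI $Z^T(Q+\mathrm{diag}(d))Z\succeq 0$ always has a strictly feasible point (take $d=\mu\mathbbm{1}$ with $\mu$ large, since $Z$ is orthonormal), so conic duality gives, for every $(\bar x,\bar y)\in{\cal F}$, that $\sup_{d\in\D_{\infty}}\bigl(\bar x^T(Q+\mathrm{diag}(d))\bar x-d^T\bar y\bigr)$ equals the sliced SDP value whenever that supremum is finite, the point being SICP-infeasible otherwise. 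That repair works --- and notably would not use the Slater hypothesis on~\eqref{sdp_daz} at all --- but it is a different certificate from the one you wrote down.
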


\begin{proof}
To prove that $\mu_{\textnormal{SDPda}} = \mu_{\textnormal{SICPda}\infty}$, we rely on the SDP~\eqref{sdp_daz} and its dual. Denote by $\mu_{\textnormal{SDPdaz}}$ and $\mu_{\textnormal{DSDPdaz}}$, the optimal objective function values of the SDP~\eqref{sdp_daz} and its dual, respectively. Since~\eqref{sdp_daz} admits a strictly feasible solution, Slater’s condition is satisfied by~\eqref{sdp_daz}, which implies that strong duality holds for this SDP and its dual. Hence, $\mu_{\textnormal{SDPdaz}} = \mu_{\textnormal{DSDPdaz}}$. Moreover, as the SDPs~\eqref{sdp_da} and~\eqref{sdp_daz} are equivalent, we have that $\mu_{\textnormal{SDPda}} = \mu_{\textnormal{SDPdaz}} = \mu_{\textnormal{DSDPdaz}}$.

Now, we construct the dual of~\eqref{sdp_daz}. Let $d_i \in \R, \; i \in [n]$, be the multipliers associated with the constraints~\eqref{sdp_daz_c2}. Then, the dual of~\eqref{sdp_daz} is given by
\begin{equation}
\label{dsdp_daz_1}
\underset{d \in \R^n}{\text{max}} \left\lbrace
\begin{aligned}
\centering
& \underset{x_z, y, X_z}{\text{min}}
& & \langle Q_{d,Z}, X_z \rangle + q_{d, \hat{x}}^T (Zx_z) + k_{d, \hat{x}} - d^Ty \\
& \;\;\; \text{s.t.} & & X_z - x_z x_z^T \succcurlyeq 0 \\
& & & L_i \leq \hat{x}_i + e_i^T Z x_z \leq U_i, \; \forall i \in [n] \\
& & & l_i\left(\hat{x}_i + e_i^T Z x_z\right) \leq y_i \leq u_i\left(\hat{x}_i + e_i^TZx_z\right), \; \forall i \in [n]
\end{aligned}
\right\rbrace
\end{equation}
where $Q_{d,Z} = Z^T Q_d Z$, $Q_d = Q+\text{diag}(d)$, $q_{d, \hat{x}} = 2 Q_d \hat{x} + q$, and $k_{d, \hat{x}} = \hat{x}^T Q_d \hat{x} + q^T\hat{x}$. For the minimization problem in~\eqref{dsdp_daz_1} to be bounded below, we need to choose $d$ such that $Q_{d,Z} \succcurlyeq 0$, i.e. $d \in \D_{\infty} := \{ d \in \R^n : Z^T(Q + \textnormal{\text{diag}}(d))Z \succcurlyeq 0 \}$. This restriction on $d$ implies that $X_z = x_zx_z^T$ holds at any optimal solution to the inner minimization problem. As a result, the dual problem in~\eqref{dsdp_daz_1} can be simplified as:
\begin{equation}
\label{dsdp_daz_2}
\underset{d \in \D_{\infty}}{\text{max}} \left\lbrace
\begin{aligned}
\underset{x_z, y}{\text{min}}\;\; & 
\left(\hat{x} + Zx_z\right)^T Q_d \left(\hat{x} + Z z_x\right) + 
q^T (\hat{x} + Zx_z) - d^Ty \\
\st\;\;     & L_i \leq \hat{x}_i + e_i^T Z x_z \leq U_i, \; \forall i \in [n] \\
 			& l_i\left(\hat{x}_i + e_i^T Z x_z\right) \leq y_i \leq u_i\left(\hat{x}_i + e_i^TZx_z\right), \; \forall i \in [n].
\end{aligned}
\right\rbrace
\end{equation}
Since strong duality holds for~\eqref{sdp_daz} and its dual~\eqref{dsdp_daz_2}, both problems attain their optimal objective functions values. This implies that $\exists \, d^* \in \D_{\infty}$ such that:
\begin{equation}
\label{dsdp_daz_3}
\left.
\begin{aligned}
\mu_\textnormal{{DSDPdaz}} = \underset{x_z, y, v}{\text{min}}\;\; & v + q^T \left(\hat{x} + Zx_z\right) \\
\st\;\;     & v \geq \left(\hat{x} + Zx_z\right)^T \left(Q+\text{diag}(d^*)\right) \left(\hat{x} + Z x_z\right) - {d^*}^Ty \\
            & L_i \leq \hat{x}_i + e_i^T Z x_z \leq U_i, \; \forall i \in [n] \\
 			& l_i\left(\hat{x}_i + e_i^T Z x_z\right) \leq y_i \leq u_i\left(\hat{x}_i + e_i^TZx_z\right), \; \forall i \in [n].
\end{aligned}
\right\rbrace
\end{equation}
It is easy to show that~\eqref{dsdp_daz_3} is a relaxation of~\eqref{sicp_da_inf}. To this end, note that by replacing $x$ and $\D_{\infty}$ in~\eqref{sicp_da_inf} with $\hat{x} + Zx_z$ and $\{d^*\}$, respectively, we obtain~\eqref{dsdp_daz_3}. Hence, $\mu_{\textnormal{SICPda}\infty} \geq \mu_{\textnormal{DSDPdaz}} = \mu_{\textnormal{SDPdaz}} = \mu_{\textnormal{SDPda}}$. Combining this condition with the result of Proposition~\ref{relationship_sdp_da_sicp_da}, we obtain $\mu_{\textnormal{SICPda}\infty} = \mu_{\textnormal{DSDPda}}$, which completes the proof.
\end{proof}
The key assumption in Theorem~\ref{sdp_sicp_equivalence} is the existence of a strictly feasible solution for the SDP~\eqref{sdp_daz}. In the next proposition, we provide a set of sufficient conditions under which a strictly feasible solution can be constructed for~\eqref{sdp_daz}.
\begin{proposition}
\label{sufficient_conditions}
Define ${\cal X} := \{x \in  {\R}^{n} : A x = b, \; L_i < x_i < U_i, \; \forall i \in [n] \}$. Assume that ${\cal X}$ is nonempty and that the following conditions hold:
\begin{enumerate}[(i)]
    \item $e_i^TZ \neq 0, \; \forall i \in [n]$.
    \item $\exists \, \widetilde{x} \in {\cal X}$ such that $l_i(\widetilde{x}_i) = \widetilde{x}_i^2$ and $l_i(\widetilde{x}_i) < u_i(\widetilde{x}_i), \; \forall i \in [n]$.
\end{enumerate}
Then, the SDP~\eqref{sdp_daz} admits a strictly feasible solution.
\end{proposition}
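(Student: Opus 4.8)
The plan is to write down an explicit triple $(x_z, y, X_z)$ and verify directly that it is strictly feasible for~\eqref{sdp_daz}, i.e., that it satisfies the equalities~\eqref{sdp_daz_c2}, makes the matrix inequality~\eqref{sdp_daz_c1} strict (equivalently, makes the Schur-complement LMI $\left(\begin{smallmatrix} 1 & x_z^T \\ x_z & X_z\end{smallmatrix}\right)$ positive definite), and satisfies the bound constraints~\eqref{sdp_daz_c3}--\eqref{sdp_daz_c4} with \emph{strict} inequalities. Since $\mathcal{X}$ is nonempty and condition (ii) holds, fix $\widetilde{x}\in\mathcal{X}$ with $l_i(\widetilde{x}_i)=\widetilde{x}_i^2$ and $l_i(\widetilde{x}_i)<u_i(\widetilde{x}_i)$ for all $i\in[n]$. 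Because $A\widetilde{x}=b$, write $\widetilde{x}=\hat{x}+Z\widetilde{x}_z$ for the unique $\widetilde{x}_z\in\R^{n-m}$, and set $x_z:=\widetilde{x}_z$, so that $\hat{x}_i+e_i^TZx_z=\widetilde{x}_i$ for every $i$.

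Next I would perturb the rank-one matrix $x_zx_z^T$ into the interior of the semidefinite cone: set $X_z:=x_zx_z^T+\varepsilon I$ with $\varepsilon>0$ to be chosen below. Then $X_z-x_zx_z^T=\varepsilon I\succ0$, so~\eqref{sdp_daz_c1} holds strictly, and the equalities~\eqref{sdp_daz_c2} are then used to \emph{define} $y$. A short computation using $\hat{x}_i^2+2\hat{x}_i(e_i^TZx_z)+(e_i^TZx_z)^2=(\hat{x}_i+e_i^TZx_z)^2=\widetilde{x}_i^2$ and $e_i^TZX_zZ^Te_i=(e_i^TZx_z)^2+\varepsilon\,\|e_i^TZ\|^2$ gives $y_i=\widetilde{x}_i^2+\varepsilon\,\|e_i^TZ\|^2$. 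By construction this makes~\eqref{sdp_daz_c2} hold, and since $\hat{x}_i+e_i^TZx_z=\widetilde{x}_i\in(L_i,U_i)$ by $\widetilde{x}\in\mathcal{X}$, the bounds~\eqref{sdp_daz_c3} hold strictly as well.

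It then remains to check~\eqref{sdp_daz_c4}. For the lower bound, condition (i) gives $\|e_i^TZ\|^2>0$, hence $y_i=\widetilde{x}_i^2+\varepsilon\,\|e_i^TZ\|^2>\widetilde{x}_i^2=l_i(\widetilde{x}_i)$ for every $\varepsilon>0$, using $l_i(\widetilde{x}_i)=\widetilde{x}_i^2$ from (ii). For the upper bound, condition (ii) gives $\widetilde{x}_i^2=l_i(\widetilde{x}_i)<u_i(\widetilde{x}_i)$, so $y_i<u_i(\widetilde{x}_i)$ whenever $\varepsilon<(u_i(\widetilde{x}_i)-\widetilde{x}_i^2)/\|e_i^TZ\|^2$; since there are only finitely many indices, picking $\varepsilon$ strictly below the minimum of these positive thresholds makes~\eqref{sdp_daz_c4} strict simultaneously for all $i$, which finishes the construction. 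I do not expect a real obstacle here: the argument is a direct construction once the correct perturbation is identified. The only points requiring care are (a) phrasing~\eqref{sdp_daz_c1} through its Schur-complement LMI so that ``strict'' unambiguously means positive definiteness of the lifted matrix, and (b) the uniform choice of $\varepsilon$ across the $n$ constraints in~\eqref{sdp_daz_c4} --- the step where both hypotheses (i) and (ii) are used in an essential way.
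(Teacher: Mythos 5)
Your construction is correct and is essentially the paper's own proof: the paper likewise sets $\bar{X}_z=\epsilon I$, defines $\bar{y}_i=\tilde{x}_i^2+\epsilon\, e_i^TZZ^Te_i$, and takes $\epsilon$ below $\delta=\min_i \bigl(u_i(\tilde{x}_i)-l_i(\tilde{x}_i)\bigr)/\bigl(e_i^TZZ^Te_i\bigr)$, which is exactly your uniform threshold. The only cosmetic difference is that the paper chooses $\hat{x}=\tilde{x}$ with $x_z=0$, whereas you keep $\hat{x}$ fixed and set $x_z=\widetilde{x}_z$; both yield the same point.
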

\begin{proof}
Define the scalar $\delta$ as: 
\begin{equation}
    \delta := \min\limits_{i \in [n]} \frac{u_i(\tilde{x}_i) - l_i(\tilde{x}_i)}{e_i^T Z Z^T e_i}.
    \label{define_delta}
\end{equation}
From assumption (i), it follows that $e_i^T Z Z^T e_i > 0, \; \forall i \in [n]$. Moreover, assumption (ii) implies that $u_i(\tilde{x}_i) - l_i(\tilde{x}_i) > 0, \; \forall i \in [n]$. Hence, $0 < \delta < \infty$. Let $\hat{x} = \tilde{x}$, $\bar{x}_z = 0$, $\bar{y}_i = \tilde{x}_i^2 + \epsilon e_i^TZZ^Te_i, \; i \in [n]$, and $\bar{X}_z = \epsilon I_{n-m}$, where $\tilde{x}$ satisfies the condition in (ii), and $0 < \epsilon < \delta$. It is simple to check that this choice is feasible in the SDP~\eqref{sdp_daz}, and further, the inequalities~\eqref{sdp_daz_c1}, \eqref{sdp_daz_c3} and~\eqref{sdp_daz_c4} are satisfied strictly. This completes the proof.
\end{proof}

If for a given index $i$ assumption (i) in Proposition~\ref{sufficient_conditions} fails to hold, then $x_i$ must be a fixed variable in the original problem. To illustrate this, note that any $x \in \R^n$ satisfying $Ax=b$ can be written as $x = \hat{x} + Z x_z$, where $\hat{x}$ satisfies $A \hat{x} = b$ and $x_z \in \R^{n-m}$. The condition $e_i^T Z = 0$ means that $e_i^T Z x_z = 0$, which in turn implies that $x_i = \hat{x}_i$. Hence, for all the indices $i$ for which $e_i^T Z = 0$, the corresponding variables can be eliminated from the original problem in order to obtain a reduced problem for which assumption (i) holds.

The satisfaction of assumption (ii) in Proposition~\ref{sufficient_conditions} depends on the form of the functions $l_i(x_i)$ and $u_i(x_i)$. It is easy to show that $u_i(x_i)$ is an affine function given by $u_i(x) = (L_i+U_i)x_i - L_i U_i$. On the other hand, the form of $l_i(x_i)$ depends on the structure of the set $S_i$. If $S_i$ is given by a closed interval, then $l_i(x_i)= x_i^2$, and in this case assumption (ii) is satisfied provided that there exists a vector $x \in {\R}^{n}$ such that $Ax = b$ and $L_i < x_i < U_i, \; \forall i \in [n]$. If $S_i$ is given by a set of two discrete points, then $l_i(x_i) = u_i(x_i), \; \forall x_i \in [L_i, U_i]$, and assumption (ii) fails to hold. This occurs, for example, when $x_i$ is binary, i.e., $S_i = \{0, 1\}$, because in this case $l_i(x_i) = u_i(x_i) = x_i, \; \forall x_i \in [0,1]$.

\subsection{Further insights into the semidefinite relaxation}

The SDP relaxation~\eqref{sdp_da} can be derived from~\eqref{sdp_d} by adding the valid equality $(Ax-b)^T (Ax-b) = 0$ and lifting it into the space of $(x,X)$. Clearly, we can construct other SDP relaxations for~\eqref{problem_statement} by including other classes of constraints derived from $Ax = b$. In general, we can apply the following procedure:

\begin{enumerate}[(R1)]
    \item\label{recipe:step1} identify a (possibly empty) set $\J$ of quadratic functions of the form $f_j(x) = x^T C_j x + c_j^T x + \gamma_j$, where $C_j \in \Sy^n, c_j \in \R^n, \gamma_j \in \R$, such that $f_j(x) = 0$ for $x \in \Omega := \{x \in \R^n \,|\, Ax = b\}$;
    \item\label{recipe:step2} construct an SDP relaxation for~\eqref{problem_statement} as
\begin{subequations}
\label{sdp_daJ}
\begin{align}
\underset{(x, y) \in {\cal F}, X}{\text{min}}\;\; & \langle Q, X \rangle + q^T x \label{sdp_daJ_obj} \\
\st\;\;     & X - x x^T \succcurlyeq 0 \label{sdp_daJ_c1} \\
 			& X_{ii} = y_i, \; \forall i \in [n] \label{sdp_daJ_c2} \\
 			& \langle C_j, X \rangle + c_j^T x + \gamma_j  = 0, \; \forall j \in \J \label{sdp_daJ_c4}
\end{align}
\end{subequations}
where the constraints~\eqref{sdp_daJ_c4} are obtained by lifting the valid equalities $x^T C_j x + c_j^T x + \gamma_j = 0, \; \forall j \in \J$ into the space of $(x,X)$.
\end{enumerate}

There are different types of functions $f_j(x)$ that satisfy the condition in~(R\ref{recipe:step1}). Some examples that have been considered in the literature are~\cite{fr:07}: $\left(x_j(A_{i\cdot}x - b_i)\right)$, and $\left((A_{j\cdot}x - b_j)(A_{i\cdot}x-b_i)\right)$, $\left(x^TA_{j\cdot}^TA_{i\cdot}x - b_jb_i\right)$. A natural question in this context is whether we can improve on the bound given by~\eqref{sdp_da} when restricted to the class of relaxations in~\eqref{sdp_daJ}. We address this question in the remainder of this section by showing that~\eqref{sdp_da} is the best relaxation among the class of relaxations in~\eqref{sdp_daJ}.

We start by recalling the properties of the functions satisfying~(R\ref{recipe:step1}). The following result follows from Theorem 1 in~\cite{fr:07}.
\begin{proposition}\label{prop:null_quad}
Let $f(x) = x^T C x + c^T x + \gamma$ be a quadratic function. Then, $f(x) = 0$ for all $x \in \Omega := \{ x \in \R^n \,|\, Ax = b\}$ if and only if $C = A^TW^T + WA$, $c = A^T \beta - 2Wb$, and $\gamma = -b^T \beta$ for some $W \in \R^{n \times m}$ and $\beta \in \R^m$.
\end{proposition}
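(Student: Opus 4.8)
The statement is an ``if and only if'', so the plan is to prove the two implications separately. The ``if'' direction is a short direct computation: assuming $C = A^TW^T + WA$, $c = A^T\beta - 2Wb$, $\gamma = -b^T\beta$, plug any $x$ with $Ax = b$ into $f$, use $Ax = b$ to replace $Ax$ by $b$ wherever it appears, and collect terms; the quadratic part gives $x^TCx = 2x^TWb$, the linear part gives $c^Tx = b^T\beta - 2x^TWb$, and adding $\gamma = -b^T\beta$ everything cancels, so $f(x)=0$. I would write this out in two or three lines, being careful that scalars such as $b^TW^Tx$ equal their transposes $x^TWb$.

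The ``only if'' direction carries the content. Since $A$ has rank $m$ the affine set $\Omega$ is nonempty; fix $\hat x$ with $A\hat x = b$ and recall $Z$ is an orthonormal basis for the nullspace of $A$, so every $x\in\Omega$ is $x = \hat x + Zx_z$ with $x_z\in\R^{n-m}$. Substituting and expanding, $f(\hat x + Zx_z)$ becomes a quadratic polynomial in $x_z$, namely $x_z^T(Z^TCZ)x_z + (2Z^TC\hat x + Z^Tc)^Tx_z + \big(\hat x^TC\hat x + c^T\hat x + \gamma\big)$, and the hypothesis forces this polynomial to vanish identically on $\R^{n-m}$. A polynomial-identity argument (evaluate at $0$, at $e_i$, at $\pm e_i$, at $e_i+e_j$) then yields three conditions: (a) $Z^TCZ = 0$; (b) $Z^T(2C\hat x + c) = 0$, i.e. $2C\hat x + c\in\mathrm{range}(A^T)$; and (c) $f(\hat x) = 0$. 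The crux is to turn (a) into $C = A^TW^T + WA$. I would argue via the orthogonal projectors $\Pi := ZZ^T$ onto $\mathrm{null}(A)$ and $\Pi^\perp := I - ZZ^T = A^T(AA^T)^{-1}A$ onto $\mathrm{range}(A^T)$: decompose $C = \Pi C\Pi + \Pi C\Pi^\perp + \Pi^\perp C\Pi + \Pi^\perp C\Pi^\perp$, note that (a) says $\Pi C\Pi = 0$, write $\Pi^\perp = A^TN = N^TA$ with $N := (AA^T)^{-1}A$, and exhibit $W$ explicitly: the block $\Pi^\perp C\Pi^\perp = A^TNCN^TA$ is $W_2A + A^TW_2^T$ with $W_2 = \tfrac12 A^TNCN^T$, and $\Pi^\perp C\Pi + \Pi C\Pi^\perp$ is $W_3A + A^TW_3^T$ with $W_3 = \Pi CN^T$, so $W := W_2 + W_3 \in \R^{n\times m}$ works. (Alternatively this step may simply be quoted from Theorem~1 in~\cite{fr:07}, which the proposition already cites.)

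With $W$ in hand the rest is bookkeeping. From (b), $c = A^T\beta' - 2C\hat x$ for some $\beta'\in\R^m$; substituting $C\hat x = Wb + A^T(W^T\hat x)$ and setting $\beta := \beta' - 2W^T\hat x$ gives $c = A^T\beta - 2Wb$. From (c), $\gamma = -\hat x^TC\hat x - c^T\hat x$; evaluating $\hat x^TC\hat x = 2\hat x^TWb$ and $c^T\hat x = b^T\beta - 2\hat x^TWb$ using $A\hat x = b$ collapses this to $\gamma = -b^T\beta$, which is exactly the claimed form. I expect the only real obstacle to be the reconstruction of $W$ from $Z^TCZ = 0$ in the paragraph above; everything else is substitution. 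One should also double-check that $AA^T$ is invertible, which holds by the standing rank-$m$ assumption on $A$.
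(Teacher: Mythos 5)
Your proof is correct, but note that the paper itself does not prove this proposition at all: it simply states that the result ``follows from Theorem~1 in~\cite{fr:07}'' and moves on. So there is no in-paper argument to compare against; what you have written is a self-contained derivation of the cited result. I checked the steps: the ``if'' direction computation is right; in the ``only if'' direction the reduction to (a) $Z^TCZ=0$, (b) $2C\hat x + c \in \mathrm{range}(A^T)$, (c) $f(\hat x)=0$ is standard (using that $Z^TCZ$ is symmetric, so the vanishing of the quadratic form forces the matrix itself to vanish); the projector decomposition with $\Pi = ZZ^T$ and $\Pi^\perp = A^T(AA^T)^{-1}A$ (invertibility of $AA^T$ being guaranteed by the paper's standing full-row-rank assumption on $A$) correctly kills the $\Pi C \Pi$ block and exhibits $W = \tfrac12 A^TNCN^T + \Pi C N^T$ with $C = WA + A^TW^T$; and the bookkeeping recovering $\beta$ and $\gamma$ from (b) and (c) goes through exactly as you describe. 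The only stylistic remark is that, since the proposition is stated as an imported fact, the authors would likely accept your parenthetical alternative of simply quoting \cite{fr:07}; but your explicit construction of $W$ is a genuine added value if one wants the paper to be self-contained.
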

Based on Proposition~\ref{prop:null_quad}, we can assume without loss of generality that $C_j = A^TW_j^T + W_jA$ for some $W_j \in \R^{n \times m}$. In the next proposition, we show that the feasible set of~\eqref{sdp_da} is a subset of the feasible set of~\eqref{sdp_daJ}. Moreover, we provide conditions on the choice of quadratic functions in $\J$ for these two feasible sets to be identical. 

\begin{proposition}\label{prop:sdp_relation_Z_J}
Let ${\cal F}_{\emph{\text{SDPda}}}$ and ${\cal F}_{\emph{\text{SDPdaJ}}}$ denote the feasible regions of the SDPs in~\eqref{sdp_da} and~\eqref{sdp_daJ}, respectively. Then, the following holds:
\begin{enumerate}[(i)]
    \item ${\cal F}_{\emph{\text{SDPda}}} \subseteq {\cal F}_{\emph{\text{SDPdaJ}}}$.
    \item If $\exists \, \omega_j, j \in \J$ such that $\sum_{j \in \J} \omega_j W_j = A^T$ then ${\cal F}_{\emph{\text{SDPda}}} = {\cal F}_{\emph{\text{SDPdaJ}}}$.
\end{enumerate}
\end{proposition}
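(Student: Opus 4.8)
The plan is to prove the two parts in order, using the characterization of ``null quadratics'' from Proposition~\ref{prop:null_quad} together with Proposition~\ref{X_characterization}.

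For part (i), I would start from a feasible point $(x, y, X) \in {\cal F}_{\text{SDPda}}$ and verify it satisfies all constraints of~\eqref{sdp_daJ}. The constraints~\eqref{sdp_daJ_c1} and~\eqref{sdp_daJ_c2} are identical to~\eqref{sdp_da_c1} and~\eqref{sdp_da_c2}, so nothing is needed there. The only work is to check the lifted equalities~\eqref{sdp_daJ_c4}, i.e. $\langle C_j, X \rangle + c_j^T x + \gamma_j = 0$ for each $j \in \J$. Writing $C_j = A^T W_j^T + W_j A$, $c_j = A^T \beta_j - 2 W_j b$, $\gamma_j = -b^T \beta_j$ as given by Proposition~\ref{prop:null_quad}, I would expand $\langle C_j, X \rangle = \langle A^T W_j^T + W_j A, X \rangle = 2 \langle W_j A, X \rangle$ (by symmetry of $X$) and then substitute the decomposition $X = x x^T + Z W_z Z^T$ from Proposition~\ref{X_characterization}, using $A Z = 0$ to kill the $Z W_z Z^T$ term. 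This reduces $\langle C_j, X \rangle$ to $\langle C_j, x x^T \rangle = x^T C_j x$, after which the whole expression becomes $x^T C_j x + c_j^T x + \gamma_j = f_j(x) = 0$ because $Ax = b$ and $f_j$ vanishes on $\Omega$ by~(R\ref{recipe:step1}). Hence $(x,y,X) \in {\cal F}_{\text{SDPdaJ}}$.

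For part (ii), I need the reverse inclusion under the hypothesis that $\sum_{j \in \J} \omega_j W_j = A^T$ for some scalars $\omega_j$. Take $(x, y, X) \in {\cal F}_{\text{SDPdaJ}}$; the goal is to recover the missing constraint~\eqref{sdp_da_c3}, namely $\langle A^T A, X - x x^T \rangle = 0$ (in the equivalent form~\eqref{sdp_da_eq_soln_c2}). From~\eqref{sdp_daJ_c4} and Proposition~\ref{prop:null_quad} each $j$ gives $2 \langle W_j A, X \rangle + (A^T \beta_j - 2 W_j b)^T x - b^T \beta_j = 0$; using $Ax = b$ the terms involving $\beta_j$ cancel, leaving $\langle W_j A, X \rangle = b^T W_j^T x = x^T W_j A x$ (again using $Ax = b$), i.e. $\langle W_j A, X - x x^T \rangle = 0$ for each $j$. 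Taking the combination $\sum_j \omega_j$ and invoking $\sum_j \omega_j W_j = A^T$ yields $\langle A^T A, X - x x^T \rangle = 0$, which is precisely~\eqref{sdp_da_c3}. Combined with part (i) this gives ${\cal F}_{\text{SDPda}} = {\cal F}_{\text{SDPdaJ}}$.

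The one subtlety to watch is the bookkeeping in part (i): Proposition~\ref{X_characterization} is stated for matrices feasible in~\eqref{sdp_da}, so it legitimately applies there, but one must be careful that the decomposition $X = x x^T + Z W_z Z^T$ is exactly what makes the $C_j$-terms collapse onto $x x^T$ — this is the crux of why~\eqref{sdp_da} is contained in every~\eqref{sdp_daJ}. I do not expect any genuine obstacle; the main thing is to handle the symmetry of $X$ cleanly so that $\langle A^T W_j^T + W_j A, X \rangle = 2\langle W_j A, X \rangle$, and to use $Ax = b$ at the right moments so the linear terms $c_j^T x + \gamma_j$ reassemble into the values that cancel.
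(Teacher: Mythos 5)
Your proposal is correct and follows essentially the same route as the paper: part (i) uses the decomposition $X = xx^T + Z W_z Z^T$ from Proposition~\ref{X_characterization} together with $AZ = 0$ to collapse $\langle C_j, X\rangle$ onto $x^T C_j x$, and part (ii) takes the $\omega_j$-weighted combination of the lifted equalities and invokes $\sum_{j\in\J}\omega_j W_j = A^T$ to recover $\langle A^TA, X - xx^T\rangle = 0$. The only cosmetic difference is that you observe $\langle W_j A, X - xx^T\rangle = 0$ for each $j$ individually before summing, whereas the paper adds and subtracts $\bar{x}^T C_j \bar{x}$ and combines first; the substance is identical.
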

\begin{proof}
We first prove (i). From Proposition~\ref{X_characterization}, any $(\bar{x},\bar{X}) \in {\cal F}_{\text{SDPda}}$ satisfies $\bar{X} = \bar{x} \bar{x}^T + Z \bar{W}_z Z^T$, where $\bar{W}_z \in \Sy^{n-m}$ and $\bar{W}_z \succeq 0$. Then, for any $(\bar{x}, \bar{y}, \bar{X}) \in {\cal F}_{\text{SDPda}}$ and $\forall j \in \J$ we have:
\begin{subequations}
\label{sdp_da_daJ_1}
\begin{align}
 \;\; & \langle C_j, \bar{X} \rangle + c_j^T \bar{x} + \gamma_j  \label{sdp_da_daJ_1_c1} \\
=\;\; & \langle C_j, \bar{X} - \bar{x}\bar{x}^T \rangle + \bar{x}^T C_j \bar{x} + c_j^T \bar{x} + \gamma_j  \label{sdp_da_daJ_1_c2} \\
=\;\; & \langle C_j, \bar{X} - \bar{x}\bar{x}^T \rangle = \langle A^T W_j^T + W_j A, Z \bar{W}_z Z^T \rangle \label{sdp_da_daJ_1_c3} = 0
\end{align}
\end{subequations}
where~\eqref{sdp_da_daJ_1_c2} follows from adding 
and subtracting $\bar{x}^T C_j \bar{x}$, the first equality in~\eqref{sdp_da_daJ_1_c3} follows from~(R\ref{recipe:step1}), the second equality in~\eqref{sdp_da_daJ_1_c3} from Proposition~\ref{prop:null_quad} and the final equality from the fact that $Z$ is a basis for the nullspace of $A$. Thus $(\bar{x},\bar{y},\bar{X}) \in {\cal F}_{\text{SDPdaJ}}$ proving the claim in (i).

Now, we prove the claim in (ii). Assume that there exist $\omega_j, j \in \J$ such that the condition in (ii) holds. By performing a linear combination of the inequalities in~\eqref{sdp_daJ_c4} using $\omega_j$, we obtain that for any $(\bar{x},\bar{y},\bar{X}) \in {\cal F}_{\text{SDPdaJ}}$: 
\begin{subequations}
\label{sdp_da_daJ_2}
\begin{align}
0 = \;\; & \sum_{j \in \J} \omega_j \left(\langle C_j, \bar{X} \rangle + c_j^T \bar{x} + \gamma_j \right)  \label{sdp_da_daJ_2_c1} \\
=\;\; & \sum_{j \in \J} \omega_j \left( \langle C_j, \bar{X} - \bar{x} \bar{x}^T \rangle + \bar{x}^T C_j \bar{x} + c_j^T \bar{x} + \gamma_j \right) \label{sdp_da_daJ_2_c2} \\
=\;\; & \sum_{j \in \J} \omega_j\langle C_j, \bar{X} - \bar{x} \bar{x}^T \rangle = 2 \langle A^TA, \bar{X} - \bar{x} \bar{x}^T \rangle \label{sdp_da_daJ_2_c3}
\end{align}
\end{subequations}
where~\eqref{sdp_da_daJ_2_c2} follows from adding 
and subtracting $\bar{x}^T C_j \bar{x}$, the first equality in~\eqref{sdp_da_daJ_2_c3} follows from~(R\ref{recipe:step1}), the second equality in~\eqref{sdp_da_daJ_2_c3} from Proposition~\ref{prop:null_quad} and the condition in (ii). 
Thus $(\bar{x},\bar{y},\bar{X}) \in {\cal F}_{\text{SDPda}}$ proving the claim in (ii).
\end{proof}

Now, we are ready to present the main result of this section.

\begin{theorem}
Suppose that $\J$ is chosen such that (R\ref{recipe:step1}) holds. Let $\mu_{\emph{\text{SDPda}}}$, $\mu_{\textnormal{SICPda}\infty}$, and $\mu_{\emph{\text{SDPdaJ}}}$ be the optimal objective function values in~\eqref{sdp_da}, \eqref{sicp_da_inf}, and~\eqref{sdp_daJ}, respectively. Then, the following hold: \begin{enumerate}[(i)]
    \item $\mu_{\emph{\text{SDPdaJ}}} \leq \mu_{\emph{\text{SDPda}}}$
    \item If the assumption in Theorem~\ref{sdp_sicp_equivalence} holds, then $\mu_{\emph{\text{SDPdaJ}}} \leq \mu_{\textnormal{SICPda}\infty}$.
\end{enumerate}
\end{theorem}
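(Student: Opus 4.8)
The plan is to obtain both claims as immediate consequences of results already established, with essentially no new computation required. The observation driving (i) is that the SDPs~\eqref{sdp_da} and~\eqref{sdp_daJ} have the \emph{same} objective function $\langle Q, X\rangle + q^T x$ and differ only in their feasible regions, both of which sit inside the common set defined by $(x,y)\in{\cal F}$ together with $X - xx^T \succcurlyeq 0$ and $X_{ii}=y_i$. Thus minimizing the same function over a larger feasible set yields a value that is no larger. Concretely, I would invoke Proposition~\ref{prop:sdp_relation_Z_J}(i), which gives ${\cal F}_{\text{SDPda}} \subseteq {\cal F}_{\text{SDPdaJ}}$, and conclude directly that $\mu_{\text{SDPdaJ}} \leq \mu_{\text{SDPda}}$, proving (i).

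For (ii), I would simply chain (i) with the equivalence established in Theorem~\ref{sdp_sicp_equivalence}. Under the hypothesis that the SDP~\eqref{sdp_daz} admits a strictly feasible solution, that theorem gives $\mu_{\text{SDPda}} = \mu_{\text{SICPda}\infty}$. Combining this with part (i) yields $\mu_{\text{SDPdaJ}} \leq \mu_{\text{SDPda}} = \mu_{\text{SICPda}\infty}$, which is the claim.

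There is no real obstacle here; the content of the theorem lies entirely in Proposition~\ref{prop:sdp_relation_Z_J} and Theorem~\ref{sdp_sicp_equivalence}, which this statement packages together. The only point requiring a word of care is to note explicitly that the two SDPs share the identical objective and the identical $(x,y)$-constraints, so that the set inclusion of feasible regions translates cleanly into the inequality of optimal values; once that is stated, both parts follow in one line each.

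\begin{proof}
We first prove (i). The SDPs~\eqref{sdp_da} and~\eqref{sdp_daJ} minimize the same objective function $\langle Q, X \rangle + q^T x$, and by Proposition~\ref{prop:sdp_relation_Z_J}(i) we have ${\cal F}_{\text{SDPda}} \subseteq {\cal F}_{\text{SDPdaJ}}$. Hence the minimum of this objective over ${\cal F}_{\text{SDPdaJ}}$ is at most its minimum over the smaller set ${\cal F}_{\text{SDPda}}$, that is, $\mu_{\text{SDPdaJ}} \leq \mu_{\text{SDPda}}$. To prove (ii), suppose the assumption in Theorem~\ref{sdp_sicp_equivalence} holds, so that $\mu_{\text{SDPda}} = \mu_{\textnormal{SICPda}\infty}$. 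Combining this with part (i) gives $\mu_{\text{SDPdaJ}} \leq \mu_{\text{SDPda}} = \mu_{\textnormal{SICPda}\infty}$, completing the proof.
\end{proof}
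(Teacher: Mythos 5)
Your proof is correct and follows exactly the paper's argument: part (i) from the feasible-set inclusion in Proposition~\ref{prop:sdp_relation_Z_J}(i) combined with the shared objective, and part (ii) by chaining (i) with Theorem~\ref{sdp_sicp_equivalence}. The only difference is that you spell out the ``same objective over a larger feasible set'' step, which the paper leaves implicit.
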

\begin{proof}
The claim in (i) follows from Proposition~\ref{prop:sdp_relation_Z_J}. If the assumption in Theorem~\ref{sdp_sicp_equivalence} holds, then $\mu_{\emph{\textnormal{SDPda}}} = \mu_{\textnormal{SICPda}\infty}$ and the claim in (ii) follows from (i).
\end{proof}

\subsection{Cutting Surface Algorithm}
\label{sec:cutting_surface_algorithm}

By replacing $\D_{\alpha}$ in~\eqref{sicp_das_cuts} with a set $\D_{\alpha}^{(k)}$ of finite dimension, we can devise an iterative cutting surface algorithm which allows us to derive convex QCP relaxations for~\eqref{problem_statement}. At the $k$-th iteration of this algorithm, the following relaxation is solved:
\begin{subequations}
\label{qcp_das}
\begin{align}
 \underset{(x,y) \in {\cal F}, v}{\text{{min}}}\;\; & v + q^T x \label{qcp_das_obj} \\
\st\;\;   & v \geq x^T \left( Q + \text{{diag}}(d) \right) x - d^T y, \; \forall d \in \D_{\alpha}^{(k)}. \label{qcp_das_cuts}             
\end{align}
\end{subequations}
This iterative approach is described in Algorithm~\ref{alg:cutting_surface_algorithm}. At each iteration of this algorithm, a separation problem is solved in order to construct a new quadratic cut of the form~\eqref{qcp_das_cuts}. As the number of quadratic cuts increases, the resulting QCP relaxations become tighter. The algorithm terminates when either the maximum number of iterations MaxNC is reached or the new quadratic cut derived at iteration $k$ does not violate the solution of the QCP relaxation constructed at iteration $k-1$.

Note that the parameter $\alpha$ is fixed during the execution of this algorithm. Proposition~\ref{sicp_da_tightest_relaxation} suggests that we should select a large value of $\alpha$ in order to improve the bound given by~\eqref{qcp_das}. We describe a procedure to determine such a value of $\alpha$ in \S\ref{implementation}.

\begin{algorithm}
\caption{A cutting surface procedure to derive QCP relaxations for~\eqref{problem_statement}}
\label{alg:cutting_surface_algorithm}
\begin{algorithmic}
\STATE \textbf{Input}: $Q$, $q$, $A$, $b$, and algebraic expressions for $l(\cdot)$ and $u(\cdot)$.
\STATE \textbf{Output}: A lower bound $\mu_{\text{QCPda}}$ on the optimal solution of~\eqref{problem_statement}.
\IF{$m = 0$}
\STATE Set $\alpha = 0$
\ELSE
\STATE Choose a positive value of $\alpha$ according to the procedure described in \S\ref{implementation}.
\ENDIF
\STATE Set $\D_{\alpha}^{(0)} = \{d^{(0)}\}$, where $d^{(0)} \in \R^n$ is a perturbation for which~\eqref{qcp_das} is convex.
\STATE Solve~\eqref{qcp_das}. Let $(\bar{x}, \bar{y}, \bar{v})$ be an optimal solution to this relaxation.
\STATE Set $\mu_{\text{QCPda}} = \bar{v} + q^T \bar{x}$.
\FOR{$k = 1$ to $\text{MaxNC}$}
\STATE Solve a separation problem to find a new perturbation $d^{(k)}$. 
\IF{the quadratic cut~\eqref{qcp_das_cuts} with $d = d^{(k)}$ violates $(\bar{x}, \bar{y}, \bar{v})$}
\STATE $\D_{\alpha}^{(k)} \leftarrow \D_{\alpha}^{(k-1)} \cup \{d^{(k)}\}$
\STATE Solve~\eqref{qcp_das}. Let $(\bar{x}, \bar{y}, \bar{v})$ be an optimal solution to this relaxation.
\STATE Set $\mu_{\text{QCPda}} = \bar{v} + q^T \bar{x}$.
\ELSE
\STATE Terminate
\ENDIF
\ENDFOR
\end{algorithmic}
\end{algorithm}

Another interesting observation about Algorithm~\ref{alg:cutting_surface_algorithm} is that the parameter $\alpha$ only appears in the separation problem (see \S\ref{separation_problem_analysis} for details), since the term $\alpha \|Ax - b\|^2$ is not included in~\eqref{qcp_das_cuts}. This is particularly advantageous because it allows us to preserve the sparsity pattern of $Q$ in the quadratic constraints~\eqref{qcp_das_cuts}. In addition, by dropping the term $\alpha \|Ax - b\|^2$ from~\eqref{qcp_das_cuts}, we prevent the relaxation from becoming ill-conditioned for large values of $\alpha$.

In order to construct the first relaxation of Algorithm~\ref{alg:cutting_surface_algorithm}, we need to specify an initial perturbation $d^{(0)}$. For simplicity, we set $d^{(0)} = \mu  \mathbbm{1}$, where $\mu \in \R_{\geq 0}$. For this choice of $d^{(0)}$, \eqref{qcp_das} becomes:
\begin{subequations}
\label{qcp_da_init}
\begin{align}
 \underset{(x,y) \in {\cal F}, v}{\text{{min}}}\;\; & v + q^T x \\
\st\;\;   & v \geq x^T \left( Q + \mu I_n \right) x - \mu \mathbbm{1}^T y.    
\end{align}
\end{subequations}
In order to select $\mu$, we consider two cases depending on the value of $m$:
\begin{enumerate}[(i)]
\item If $m = 0$, \eqref{problem_statement} is an unconstrained optimization problem. We run Algorithm~\ref{alg:cutting_surface_algorithm} only if $Q$ is indefinite and set $\mu = -\lambda_{\text{min}} (Q)$. It is easy to verify that this choice of $\mu$ renders $Q + \mu I_n$ positive semidefinite, thus ensuring the convexity of~\eqref{qcp_da_init}.

\item If $m > 0$, \eqref{problem_statement} contains at least one equality constraint. We run Algorithm~\ref{alg:cutting_surface_algorithm} only if $Z^T Q Z$ is indefinite and set $\mu = -\lambda_{\text{min}} (Z^T Q Z)$. It is simple to check that, for this choice of $\mu$, \eqref{qcp_da_init} is a convex problem. To this end, note that the projection of~\eqref{qcp_da_init} onto the nullspace of A can be obtained from~\eqref{sicp_das_proj} by considering a single quadratic constraint in~\eqref{sicp_das_proj_c1} and setting $d = \mu \mathbbm{1}$. The resulting problem is convex when $Z^T \left(Q + \mu I_n \right) Z \succcurlyeq 0$. It is easy to verify that our choice of $\mu$ satisfies this condition.
\end{enumerate}
Since~\eqref{qcp_da_init} contains a single quadratic constraint and $\mu \geq 0$, we can eliminate the variables $y$ and $v$, and rewrite this QCP as the following quadratic program:
\begin{equation}
\label{qp_init}
\begin{array}{cl}
 \underset{x \in {\cal X}}{\text{{min}}}\;\; & x^T \left( Q + \mu I_n \right) x + q^T x - \mu \sum\limits_{i = 1}^{n} u_i(x_i).
\end{array}
\end{equation}
We refer to the relaxations obtained by setting $\mu = - \lambda_{\text{min}} (Q)$ and $\mu = -\lambda_{\text{min}} (Z^T Q Z)$ in~\eqref{qp_init} as the~\textit{eigenvalue relaxation} and the~\textit{eigenvalue relaxation in the nullspace of $A$}, respectively (see Section 3 in~\cite{nrs:20} for details). In (ii), we could use the same initial perturbation as in (i). However, as shown in~\cite{nrs:20}, the perturbation given in (ii) can lead to a tighter initial bound.

Observe that the spectral relaxations introduced in~\cite{nrs:20} are derived through a uniform diagonal perturbation of $Q$, i.e., by considering a perturbation vector $d \in \R^n$ whose entries are identical to each other. By contrast, the QCP relaxations of Algorithm~\ref{alg:cutting_surface_algorithm} are derived through a nonuniform diagonal perturbation of $Q$, i.e., by considering a perturbation vector $d \in \R^n$  whose entries are allowed to differ from each other. Note also that, since the bound provided by~\eqref{qcp_da_init} is equal to the bound given by~\eqref{qp_init}, by construction, the QCP relaxations solved during the execution in Algorithm~\ref{alg:cutting_surface_algorithm} are at least as tight at the spectral relaxations considered in~\cite{nrs:20}.

\section{Analysis and regularization of the separation problem}
\label{separation_problem_analysis}

We start this section by presenting the separation problem solved in Algorithm~\ref{alg:cutting_surface_algorithm}. Let $(\bar{x}, \bar{y}, \bar{v})$ be an optimal solution to the relaxation~\eqref{qcp_das}. Then, in order to construct a quadratic inequality of the form~\eqref{qcp_das_cuts} that is maximally violated by $(\bar{x}, \bar{y}, \bar{v})$, we can solve the following optimization problem:
\begin{equation}
\label{separation_da_sup}
\begin{aligned}
\underset{d \in \D_{\alpha}}{\text{sup}}\;\; & \sum\limits_{i = 1}^{n} \left(\bar{x}_i^2 - \bar{y}_i \right) d_i.
\end{aligned}
\end{equation}
This SDP is parametrized by the value of $\alpha$ determined at the beginning of Algorithm~\ref{alg:cutting_surface_algorithm}. Observe also that the separation problem considered in~\cite{d:16} is a particular instance of~\eqref{separation_da_sup}, obtained by setting $\alpha = 0$ in~\eqref{separation_da_sup}. For the remainder of this section, we will cast~\eqref{separation_da_sup} as:
\begin{equation}
\label{separation_da}
\begin{aligned}
\underset{d \in \D_{\alpha}}{\text{inf}}\;\; & \eta^T d
\end{aligned}
\end{equation}
where $\eta_i := \bar{y}_i - \bar{x}_i^2, \; \forall i \in [n]$. As shown in~\cite{d:16}, the attainment of the infimum in~\eqref{separation_da} is not guaranteed, and may depend on the problem data. We illustrate this behavior through the following example.

\begin{example}
\label{example_attainment}

Let $Q = \begin{bsmallmatrix} 0 & \;\;\;2 \\ 2 & \;-1\end{bsmallmatrix}$, $A =\begin{bsmallmatrix} 0 & 1 \end{bsmallmatrix}$, and $\alpha = 1$ in~\eqref{separation_da}. Consider the following cases:

\begin{enumerate}[(i)]

\item $\bar{x} = \bar{y} = \begin{bsmallmatrix} 0.5 & \; 0.5 \end{bsmallmatrix}^T$. In this case, the infimum in~\eqref{separation_da} is attained for $d_1^* = d_2^* = 2$.

\item $\bar{x} = \bar{y} = \begin{bsmallmatrix} 0.4 & \; 0 \end{bsmallmatrix}^T$. In this case, the infimum in~\eqref{separation_da} cannot be attained since it occurs as $d_1 \rightarrow 0$ and $d_2 \rightarrow \infty$.

\end{enumerate}

\end{example}

To further analyze the attainment of~\eqref{separation_da}, we construct the dual of this SDP. Let $Y \in \Sy^n$ be the matrix of dual variables associated with the semidefinite constraint $Q + \text{diag}(d) + \alpha A^T A \succcurlyeq 0$. Then, the dual of~\eqref{separation_da} is:
\begin{subequations}
\label{separation_da_dual}
\begin{align}
\underset{Y \succcurlyeq 0}{\text{sup}}\;\; & -\langle Q + \alpha A^T A, Y \rangle \\
  \st\;\;      & Y_{ii} = \eta_i, \; \forall i \in [n]. \label{separation_da_dual_c1}
\end{align}
\end{subequations}
Let $P:= \{ i \in [n] \, : \, \bar{y}_i = \bar{x}_i^2 \}$. From~\eqref{separation_da_dual_c1}, it follows that $Y_{ii} = 0, \; \forall i \in P$. Hence, if $P \neq \emptyset$, \eqref{separation_da_dual} does not admit a strictly feasible solution and, as a result, strong duality may not hold for the primal-dual pair \eqref{separation_da},\eqref{separation_da_dual}. In Example~\ref{example_attainment}, we have $P = \emptyset$ for (i), and $P = \{2\}$ for (ii).

In the separation step of Algorithm~\ref{alg:cutting_surface_algorithm}, we do not need to solve~\eqref{separation_da} to optimality, but rather derive quadratic cuts that can be used to tighten a relaxation of the form~\eqref{qcp_das}. As a result, we can replace~\eqref{separation_da} with a regularized separation problem constructed in a way such that its optimum is always attained. One option is to regularize~\eqref{separation_da} as discussed in~\cite{d:16}. To this end, we can add to the objective function of~\eqref{separation_da} the term $\lambda \sum_{i = 1}^{n} {[d_i]}_{+}$, where $\lambda = \sum_{i = 1}^{n} (\bar{y}_i - \bar{x}_i^2)$, and ${[d_i]}_{+}$ is equal to $d_i$ if $d_i > 0$, and $0$ otherwise. This leads to the following regularized separation problem:
\begin{equation}
\label{separation_da_nonsmooth_reg}
\begin{aligned}
\underset{d \in \D_{\alpha}}{\text{inf}}\;\; & \eta^T d + \lambda \sum\limits_{i = 1}^{n} {[d_i]}_{+}.
\end{aligned}
\end{equation}
It is simple to show that the infimum in this SDP is always attained (see Proposition 3 in~\cite{d:16} for details). The parameter $\lambda$ is always positive unless the current relaxation is exact.

In this paper, we propose an alternative regularization for~\eqref{separation_da}. We modify this problem by adding to the objective function the quadratic term $\rho  d^T d$, where $\rho$ is a positive scalar. The resulting regularized separation problem is:
\begin{equation}
\label{separation_da_smooth_reg}
\begin{aligned}
\underset{d \in \D_{\alpha}}{\text{inf}}\;\; & \eta^T d + \rho  d^T d.
\end{aligned}
\end{equation}
As we show in the following proposition, this regularization also gives rise to a separation problem for which the optimum is always attained.

\begin{proposition}
\label{smooth_reg_attainment}
Let $\rho > 0$ in~\eqref{separation_da_smooth_reg}. Then, the optimal solution to the semidefinite program~\eqref{separation_da_smooth_reg} is always attained at some finite point.
\end{proposition}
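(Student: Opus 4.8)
The plan is to show that the objective function $\phi(d) := \eta^T d + \rho\, d^T d$ is coercive on the feasible set $\D_\alpha$, i.e. $\phi(d) \to \infty$ as $\|d\| \to \infty$ with $d \in \D_\alpha$, and then invoke a standard compactness argument: since $\D_\alpha$ is closed (it is the preimage of the closed positive semidefinite cone under the continuous affine map $d \mapsto Q + \text{diag}(d) + \alpha A^TA$) and $\phi$ is continuous, coercivity guarantees that the infimum is attained at some finite point. The crucial observation is that the quadratic regularizer $\rho\, d^T d = \rho \|d\|^2$ grows quadratically in $\|d\|$, whereas the linear term $\eta^T d$ can decrease at most linearly, namely $\eta^T d \geq -\|\eta\|\,\|d\|$ by Cauchy--Schwarz. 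Hence
\begin{equation*}
\phi(d) \;\geq\; \rho\|d\|^2 - \|\eta\|\,\|d\| \;=\; \|d\|\bigl(\rho\|d\| - \|\eta\|\bigr),
\end{equation*}
which tends to $+\infty$ as $\|d\| \to \infty$, regardless of whether the feasible set is bounded.

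First I would record that $\D_\alpha$ is nonempty (this is needed for the statement to be meaningful; it follows because, for instance, $d = \mu\mathbbm{1}$ with $\mu$ large enough lies in $\D_\alpha$, since $Q + \mu I_n + \alpha A^TA \succcurlyeq 0$ for $\mu \geq -\lambda_{\text{min}}(Q + \alpha A^TA)$) and closed, as noted above. Second I would fix any feasible point $d^0 \in \D_\alpha$ and set $c := \phi(d^0)$; the sublevel set $\{ d \in \D_\alpha : \phi(d) \leq c \}$ contains $d^0$ and is therefore nonempty. Third, using the coercivity bound above, I would show this sublevel set is bounded: if $\phi(d) \leq c$ then $\|d\|(\rho\|d\| - \|\eta\|) \leq c$, which forces $\|d\| \leq R$ for some finite radius $R$ depending only on $\rho$, $\|\eta\|$ and $c$ (explicitly, $R = (\|\eta\| + \sqrt{\|\eta\|^2 + 4\rho\,\max\{c,0\}})/(2\rho)$ works). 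Being closed and bounded in $\R^n$, the sublevel set is compact. Fourth, a continuous function on a nonempty compact set attains its minimum; since any minimizer over the sublevel set is also a global minimizer of $\phi$ over $\D_\alpha$ (points outside the sublevel set have strictly larger objective value), the optimum of~\eqref{separation_da_smooth_reg} is attained at a finite point.

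I do not anticipate a serious obstacle: the argument is the textbook "coercive + closed $\Rightarrow$ attainment" template, and the only mildly delicate point is confirming that $\D_\alpha$ is closed (so that the sublevel set is genuinely compact rather than merely bounded). This is immediate from continuity of the affine matrix-valued map and closedness of the PSD cone, so I would state it in one line. The contrast with the unregularized problem~\eqref{separation_da} — where $\eta^T d$ alone need not be coercive on $\D_\alpha$, as Example~\ref{example_attainment}(ii) shows — is exactly what the quadratic term repairs, and it may be worth a sentence of commentary but is not part of the formal proof.
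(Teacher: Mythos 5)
Your proof is correct, but it takes a genuinely different route from the paper's. You argue by coercivity: the quadratic regularizer makes $\eta^T d + \rho\, d^T d$ grow like $\rho\|d\|^2$ while the linear term loses at most $\|\eta\|\|d\|$, so sublevel sets restricted to the closed, nonempty set $\D_{\alpha}$ are compact and Weierstrass gives attainment. All the ingredients check out: $\D_{\alpha}$ is indeed closed as the preimage of the PSD cone under an affine map, it is nonempty via $d = \mu\mathbbm{1}$ for $\mu$ large, and the radius bound on the sublevel set is right. The paper instead forms the Lagrangian dual of~\eqref{separation_da_smooth_reg}, which (because the constraint $Y_{ii}=\eta_i$ of the unregularized dual gets absorbed into a quadratic penalty $-\tfrac{1}{4\rho}\sum_i (Y_{ii}-\eta_i)^2$) has $\bar Y = I_n$ as a strictly feasible point; together with the strictly feasible primal point $\bar d = \mu\mathbbm{1}$, strong duality holds and both problems attain their optima. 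Your argument is more elementary and self-contained, and it isolates exactly what the regularizer buys (coercivity on an unbounded feasible set), which dovetails nicely with Example~\ref{example_attainment}(ii). The paper's duality argument proves slightly more --- it also yields dual attainment and a zero duality gap --- and it mirrors the dual-based diagnosis in \S\ref{separation_problem_analysis} of why the unregularized problem~\eqref{separation_da} can fail to attain (loss of strict feasibility of the dual when $P \neq \emptyset$). Either proof is acceptable for the proposition as stated, which only claims primal attainment.
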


\begin{proof}
This proof relies on strong duality holding for~\eqref{separation_da_smooth_reg} and its dual. Denote by $Y \in \Sy^n$ the matrix of dual variables associated with the semidefinite constraint $Q + \text{diag}(d) + \alpha A^T A \succcurlyeq 0$. Then, the dual of~\eqref{separation_da_smooth_reg} is:
\begin{equation}
\label{separation_da_smooth_reg_dual}
\begin{aligned}
\underset{Y \succcurlyeq 0}{\text{sup}}\;\; & -\langle Q + \alpha A^T A, Y \rangle - \dfrac{1}{4 \rho} \sum\limits_{i = 1}^{n} {\left( Y_{ii} - \eta_i \right)}^2.
\end{aligned}
\end{equation}
Now, let $\bar{d} = \mu \mathbbm{1}$ and $\bar{Y} = I_n$, where $\mu > -\min(0, \lambda_{\text{min}} (Q + \alpha A^T A))$. Clearly, $\bar{d}$ and $\bar{Y}$ are strictly feasible in~\eqref{separation_da_smooth_reg} and~\eqref{separation_da_smooth_reg_dual}, respectively. Hence, strong duality holds, and both SDPs attain their optimal solutions. 
\end{proof}

\section{Solution of the regularized separation problem}
\label{coordinate_minimization_algorithm}

In this section, we describe the algorithm that we use to solve the regularized separation problem proposed in \S\ref{separation_problem_analysis}. This algorithm is a modification of the barrier coordinate minimization method introduced in~\cite{d:16}. To solve~\eqref{separation_da_smooth_reg}, our algorithm operates on the following penalized log-det problem:
\begin{equation}
\label{separation_da_smooth_reg_log_det}
\begin{array}{cl}
\underset{d \in \R^n}{\text{inf}}\;\; & f(d; \sigma) := G(d) - \sigma \text{log-det}\left(Q + \text{diag}(d) + \alpha A^T A \right) \\
  \st\;\;      & Q + \text{diag}(d) + \alpha A^T A \succ 0
\end{array}
\end{equation}
where $G(d) = \sum_{i = 1}^{n} g_i(d_i)$, $g_i(d_i) = \eta_i d_i + \rho d_i^2, \; \forall i \in [n]$, and $\sigma$ is a positive penalty parameter. The optimality condition for~\eqref{separation_da_smooth_reg_log_det} can be expressed as:
\begin{equation}
\label{optimality_condition}
\begin{array}{cl}
\nabla f(d; \sigma) = 0, \;\;\; Q + \text{diag}(d) + \alpha A^T A \succ 0
\end{array}
\end{equation}
where the gradient of $f(d; \sigma)$ has the form:
\begin{equation}
\label{gradient_f}
\begin{array}{cl}
\nabla f(d; \sigma) = \nabla G(d)  - \sigma \text{diag} \left( {\left[ Q + \text{diag}(d) + \alpha A^T A \right]}^{-1} \right)
\end{array}
\end{equation}
with $\nabla G(d)_i = \eta_i + 2 \rho d_i, \; \forall i \in [n]$. At each iteration of this algorithm, we update a feasible vector $\bar{d}$ and an inverse matrix $V:= {\left[ Q + \text{diag}(\bar{d}) + \alpha A^T A \right]}^{-1}$. Based on the optimality condition in~\eqref{optimality_condition}, we perform coordinate minimization by choosing an index $i$ which corresponds to the entry of $\nabla f(\bar{d}; \sigma)$ with the largest magnitude:
\begin{equation}
\label{index_choice}
\begin{array}{cl}
i = \text{arg}\underset{j = 1, \dots, n}{\text{max}} \left\lbrace \left| {\nabla f(\bar{d}; \sigma)}_j \right| \right\rbrace.
\end{array}
\end{equation}
This choice of $i$ leads to the following one-dimensional minimization problem:
\begin{equation}
\label{one_dimensional_problem}
\begin{array}{cl}
\Delta d_i^* \in \text{arg}\underset{\Delta d_i}{\text{min}} \left\lbrace f(\bar{d} + \Delta d_i e_i; \sigma)   \, : \, Q + \text{diag}(\bar{d} + \Delta d_i e_i) + \alpha A^T A \succ 0 \right\rbrace.
\end{array}
\end{equation}
As we show in the next proposition, it is possible to find a closed-form expression for the optimal solution of~\eqref{one_dimensional_problem}.

\begin{proposition}
\label{closed_form_solution}
The optimal solution to the one-dimensional problem~\eqref{one_dimensional_problem} is:
\begin{equation}
\label{delta_di}
\begin{array}{cl}
\Delta d_i^* = -(\phi_i + \tau_i) + \sqrt{ {(\phi_i - \tau_i)}^2 + \kappa }
\end{array}
\end{equation}
where $\phi_i = 1/(2 V_{ii})$, $\tau_i = (\eta_i + 2 \rho \bar{d}_i )/ (4\rho)$ and $\kappa = \sigma / (2 \rho)$.
\end{proposition}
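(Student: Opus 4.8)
The plan is to reduce the two-dimensional constrained minimization over $\Delta d_i$ to an unconstrained one-variable calculus problem and then solve the resulting stationarity equation in closed form. First I would use the rank-one update structure: since only the $i$-th diagonal entry changes, $Q + \text{diag}(\bar d + \Delta d_i e_i) + \alpha A^TA = M + \Delta d_i\, e_i e_i^T$ where $M := Q + \text{diag}(\bar d) + \alpha A^TA \succ 0$ and $V = M^{-1}$. The matrix determinant lemma gives $\det(M + \Delta d_i e_i e_i^T) = \det(M)(1 + \Delta d_i V_{ii})$, so the log-det term of $f(\bar d + \Delta d_i e_i;\sigma)$ equals $\text{log-det}(M) + \log(1 + \Delta d_i V_{ii})$ up to the additive constant, and the feasibility condition $M + \Delta d_i e_i e_i^T \succ 0$ is equivalent to $1 + \Delta d_i V_{ii} > 0$ (using $V_{ii} > 0$, which holds since $M \succ 0$). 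The separable part $G$ changes only in its $i$-th coordinate: $g_i(\bar d_i + \Delta d_i) = \eta_i(\bar d_i + \Delta d_i) + \rho(\bar d_i + \Delta d_i)^2$.

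Next I would write the objective as a function of the single scalar $t := \Delta d_i$ on the open interval $\{t : 1 + t V_{ii} > 0\}$, namely
\[
h(t) = \eta_i(\bar d_i + t) + \rho(\bar d_i + t)^2 - \sigma\log(1 + t V_{ii}) + \text{const}.
\]
This $h$ is strictly convex (sum of a strictly convex quadratic and $-\sigma\log$ of an affine function with positive slope) and coercive as $t$ approaches either endpoint of its domain, so it has a unique minimizer characterized by $h'(t) = 0$. Computing $h'(t) = \eta_i + 2\rho(\bar d_i + t) - \sigma V_{ii}/(1 + tV_{ii})$ and setting it to zero, I would multiply through by $(1 + tV_{ii})$ to obtain a quadratic equation in $t$. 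Collecting terms and dividing by the leading coefficient $2\rho V_{ii}$, the equation takes the form $t^2 + 2(\phi_i + \tau_i)t + (\text{something}) = 0$ with $\phi_i = 1/(2V_{ii})$, $\tau_i = (\eta_i + 2\rho\bar d_i)/(4\rho)$, $\kappa = \sigma/(2\rho)$; solving by the quadratic formula and selecting the root in the feasible domain $1 + tV_{ii} > 0$ (equivalently $t > -2\phi_i$) forces the $+$ sign in front of the square root, yielding $\Delta d_i^* = -(\phi_i + \tau_i) + \sqrt{(\phi_i - \tau_i)^2 + \kappa}$.

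The bulk of the work is the routine algebra of expanding the quadratic and matching coefficients to the claimed form; the one genuinely substantive point — and the step I would be most careful about — is the root selection: I must verify that the discriminant $(\phi_i - \tau_i)^2 + \kappa$ is strictly positive (immediate since $\kappa > 0$), that the $+$ root indeed satisfies $1 + \Delta d_i^* V_{ii} > 0$, and that the $-$ root does not, so that the closed-form expression is the unique feasible stationary point and hence the global minimizer. This can be checked directly: $1 + \Delta d_i^* V_{ii} > 0 \iff \Delta d_i^* > -2\phi_i \iff \sqrt{(\phi_i-\tau_i)^2 + \kappa} > \phi_i - \tau_i$, which holds because the left side is at least $|\phi_i - \tau_i|$ and strictly exceeds it since $\kappa > 0$. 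The strict convexity and coercivity of $h$ on its domain then guarantee this stationary point is the optimum, completing the proof.
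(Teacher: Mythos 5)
Your proposal is correct and follows essentially the same route as the paper: set the one-dimensional derivative to zero, solve the resulting quadratic in $\Delta d_i$, and select the root satisfying $1 + \Delta d_i V_{ii} > 0$ (the paper imports this feasibility criterion as Lemma 1 of Dong, which you rederive via the matrix determinant lemma, and you additionally make explicit the convexity/coercivity argument that the feasible stationary point is the global minimizer). One trivial slip: feasibility of the $+$ root is equivalent to $\sqrt{(\phi_i-\tau_i)^2+\kappa} > \tau_i - \phi_i$, not $> \phi_i - \tau_i$, but your justification via $\sqrt{(\phi_i-\tau_i)^2+\kappa} > |\phi_i-\tau_i|$ covers both signs, so the conclusion is unaffected.
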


\begin{proof}

At the optimal solution of~\eqref{one_dimensional_problem} the following holds:
\begin{equation}
\label{optimality_condition_1D_b}
\begin{array}{cl}
\dfrac{\partial f(\bar{d} + \Delta d_i e_i; \sigma)}{\partial  \Delta d_i} = \eta_i + 2 \rho (\bar{d}_i + \Delta d_i) - \dfrac{\sigma V_{ii}}{ 1 + \Delta d_i V_{ii}} = 0.
\end{array}
\end{equation}
By solving for $\Delta d_i$ in~\eqref{optimality_condition_1D_b}, we obtain the roots ${\Delta d_i^*}^{(\pm)} = -(\phi_i + \tau_i) \pm \sqrt{ {(\phi_i - \tau_i)}^2 + \kappa }$. It is easy to show that ${\Delta d_i^*}^{(+)}$ is the only one of these two solutions that is feasible in~\eqref{one_dimensional_problem}. By applying Lemma 1 from~\cite{d:16}, we have:
\begin{equation}
\label{delta_di_feasibility_condition}
\begin{aligned}
Q + \text{diag}(\bar{d} + \Delta d_i e_i) + \alpha A^T A \succ 0 \iff \Delta d_i > -1 / V_{ii}.
\end{aligned}
\end{equation}
Therefore, for ${\Delta d_i^*}^{(+)}$ to be feasible in~\eqref{one_dimensional_problem} we must have $z + \sqrt{ z^2 + \kappa } > 0$, where $z = (\phi_i - \tau_i)$. It is simple to check that this condition is always satisfied. To this end, note that $\kappa > 0$. This implies $z + \sqrt{ z^2 + \kappa } > z + |z| \geq 0, \; \forall z \in \R$. Using a similar analysis, we can show that ${\Delta d_i^*}^{(-)}$ is infeasible in~\eqref{one_dimensional_problem}.
\end{proof}

After solving the one-dimensional problem~\eqref{one_dimensional_problem}, we update $\bar{d}$ as:
\begin{equation}
\label{update_d}
\begin{aligned}
\bar{d} \leftarrow \bar{d} + \Delta d_i^* e_i \\
\end{aligned}
\end{equation}
and update $V$ using the Sherman-Morrison formula:
\begin{equation}
\label{update_V}
\begin{aligned}
V \leftarrow V - \dfrac{\Delta d_i^*  V_{\cdot i} {V_{\cdot i}}^T}{1 + \Delta d_i^*  V_{ii}}.
\end{aligned}
\end{equation}

In our numerical experiments, we noticed that, for very small values of $\rho$, some of the entries of $\bar{d}$ become very large after performing the update in~\eqref{update_d}. This is not surprising because: (i) for very small values of $\rho$,~\eqref{separation_da_smooth_reg} exhibits a similar behavior to~\eqref{separation_da}, and (ii) as discussed in \S\ref{separation_problem_analysis}, the finite attainment of~\eqref{separation_da} is not guaranteed. To address this issue, we propose an adaptive strategy in order to adjust the value of $\rho$ used in~\eqref{separation_da_smooth_reg}. At a given iteration of our algorithm, after performing the update in~\eqref{update_d}, we determine the entry of $\bar{d}$ with the largest magnitude, i.e.:
\begin{equation}
\label{d_max}
\begin{array}{cl}
\bar{d}_{\text{max}} = \underset{j = 1, \dots, n}{\text{max}} \left\lbrace \left| \bar{d}_j \right| \right\rbrace.
\end{array}
\end{equation}
If $\bar{d}_{\text{max}}$ is at least an order of magnitude larger than $\mu :=-\lambda_{\text{min}} (Q + \alpha A^T A)$, we increase $\rho$ by multiplying it by a factor $\rho_{\text{upd}}$, and restart our algorithm with this new value of $\rho$. In practice, this adaptive strategy only requires a few restarts before finding a suitable value of $\rho$. For the first run, we set $\rho = \rho_{\text{init}}$.

Once~\eqref{separation_da_smooth_reg_log_det} has been solved within a given precision, we update the penalty parameter $\sigma$ according to the following condition:
\begin{equation}
\label{update_sigma}
\begin{aligned}
\sigma \leftarrow \max\{\sigma_{\text{min}}, \sigma_{\text{upd}}  \sigma \} \;\;\;\; \text{if} \;\;\;\; \dfrac{ {\Vert \nabla f(\bar{d}; \sigma) \Vert}_2 }{ {\Vert \eta \Vert}_2 } \leq \epsilon_{\text{upd}}
\end{aligned}
\end{equation}
where $\nabla f(\bar{d}; \sigma)$  is used as a measure of optimality. We check the relative improvement in the objective function of~\eqref{separation_da_smooth_reg} every $\omega_{\text{check}}  n$ iterations, and terminate our algorithm if this relative improvement is smaller than $\epsilon_{\text{check}}$.  

Our coordinate minimization strategy is summarized in Algorithm~\ref{alg:coordinate_minimization_smooth}. If $Q + \alpha A^T A$ is positive semidefinite, then $Z^T Q Z$ is also positive semidefinite and~\eqref{problem_statement} is convex when restricted to the nullspace of $A$. In this case, it suffices to  solve the continuous relaxation of~\eqref{problem_statement} in order to obtain a lower bound. As a result, the separation procedure outlined in Algorithm~\ref{alg:coordinate_minimization_smooth} is only used if $Q + \alpha A^T A$ is indefinite. We start Algorithm~\ref{alg:coordinate_minimization_smooth} with an initial perturbation $\hat{d} = -1.5 \lambda_{\text{min}} (Q + \alpha A^T A)  \mathbbm{1}$. We set $\text{MaxIter} = 500  n$, $\sigma_{\text{min}} = 10^{-5}$, $\sigma_{\text{upd}} = 0.8$, $\epsilon_{\text{upd}} = 0.03$, $\omega_{\text{check}} = 10$ and $\epsilon_{\text{check}} = 10^{-4}$. We initialize $\rho_{\text{init}}$ as:
\begin{equation}
\label{rho_init}
\begin{aligned}
\rho_{\text{init}} = 10^{-4}  \dfrac{10^{4  \floor{\log_{10} (\delta_{\max})}}}{\max\{1, \floor{Q_{\max} / 100}  Q_{\max} \}}
\end{aligned}
\end{equation}
where $Q_{\max}$ and $\delta_{\max}$ are given by:
\begin{equation}
\label{rho_init_parameters}
\begin{aligned}
Q_{\max} = \underset{i = 1, \dots, n, \; j = i, \dots, n}{\text{max}} \left\lbrace \left| Q_{ij} \right| \right\rbrace, \;\;\; \delta_{\max} = \underset{i = 1, \dots, n}{\text{max}} \left\lbrace U_i - L_i \right\rbrace
\end{aligned}
\end{equation}
and set $\rho_{\text{upd}} = 10$. The initial value of  $\sigma_{\text{init}}$ is determined as:
\begin{equation}
\label{sigma_formula}
\begin{aligned}
\sigma_{\text{init}} = {{\text{median}} \left\lbrace \left| \dfrac{\eta_i + 2 \rho \hat{d}_i}{V_{ii}} \right| \right\rbrace}_{i = 1}^{n}.
\end{aligned}
\end{equation}

\begin{algorithm}
\caption{Barrier coordinate minimization algorithm used to solve the smooth regularized separation problem~\eqref{separation_da_smooth_reg}}
\label{alg:coordinate_minimization_smooth}
\begin{algorithmic}[1]
\STATE \textbf{Input}: $Q$, $A$, $\alpha$, and an optimal solution $(\bar{x}, \bar{y}, \bar{v})$ to~\eqref{qcp_das}.
\STATE \textbf{Output}: A vector $\bar{d}$ that solves~\eqref{separation_da_smooth_reg}.
\STATE Set $\rho = \rho_{\text{init}}$, where $\rho_{\text{init}}$ is determined using~\eqref{rho_init}.
\STATE Set $\bar{d} = \hat{d}$, where $\hat{d} = 1.5 \mu  \mathbbm{1}$ and $\mu = -\lambda_{\text{min}} (Q + \alpha A^T A)$.
\STATE Set $\sigma = \sigma_{\text{init}}$, where $\sigma_{\text{init}}$ is calculated according to~\eqref{sigma_formula}.
\STATE Calculate $V = {\left[ Q + \text{diag}(\bar{d}) + \alpha A^T A \right]}^{-1}$ and set $k = 0$.
\WHILE{$(k < \text{MaxIter})$}
\STATE Update $k \leftarrow k + 1$.
\STATE Determine an index $i$ according to~\eqref{index_choice} and calculate $\Delta d_i^*$ using~\eqref{delta_di}.
\STATE Update $\bar{d}$ according to~\eqref{update_d} and determine $\bar{d}_{\max}$ using~\eqref{d_max}.
\IF{$(\bar{d}_{\max} > 10  \mu)$}
\STATE Update $\rho \leftarrow \rho_{\text{upd}}  \rho $ and \textbf{goto} 4.
\ENDIF
\STATE Update $V$ according to~\eqref{update_V}.
\STATE Adjust $\sigma$ according to~\eqref{update_sigma}.
\IF{$(k \bmod (\omega_{\text{check}}  n) = 0)$}
\STATE Terminate if the improvement in the objective of~\eqref{separation_da_smooth_reg} is smaller than $\epsilon_{\text{check}}$.
\ENDIF
\ENDWHILE
\end{algorithmic}
\end{algorithm}

Even though Algorithm~\ref{alg:coordinate_minimization_smooth} is a variant of the barrier coordinate minimization algorithm introduced in~\cite{d:16}, there are two key differences between these two algorithms. First, unlike the algorithm presented in~\cite{d:16}, Algorithm~\ref{alg:coordinate_minimization_smooth} does not rely on nonsmooth optimization techniques because the objective function of our regularized separation problem~\eqref{separation_da_smooth_reg} is smooth. Second, the regularization parameter $\lambda$ used in~\eqref{separation_da_nonsmooth_reg} is fixed throughout the execution of the algorithm proposed in~\cite{d:16}. By contrast, in Algorithm~\ref{alg:coordinate_minimization_smooth}, we adaptively adjust the regularization parameter $\rho$ used in~\eqref{separation_da_smooth_reg}. As we demonstrate in \S\ref{sec:results_relaxations}, because of this adaptive strategy, the quadratic cuts derived by solving~\eqref{separation_da_smooth_reg} with our algorithm lead to significantly tighter relaxations than the quadratic cuts obtained through the solution of~\eqref{separation_da_nonsmooth_reg} with the algorithm proposed in~\cite{d:16}. For the sake of completeness, in \S\ref{appendix}, we provide a detailed description of the barrier coordinate minimization algorithm introduced in~\cite{d:16}.

\section{Implementation in a branch-and-bound algorithm}
\label{implementation}

BARON's portfolio of relaxations consists of linear, quadratic, nonlinear, and mixed-integer linear programming relaxations~\cite{ks:18,ks19,nrs:20,ts:comp:04}. As part of our implementation, we have expanded this portfolio by adding the convex relaxations described in \S\ref{sec:cutting_surface_algorithm}. These new relaxations are only used if the original model supplied to BARON is of the form~\eqref{problem_statement}.

At the root node of the branch-and-bound tree, we solve QCP relaxations of the form~\eqref{qcp_das} by running Algorithm~\ref{alg:cutting_surface_algorithm} with the maximum number of iterations $\text{MaxNC}$ set to $20$. In each iteration of this algorithm, we generate quadratic cuts of the form~\eqref{qcp_das_cuts} by solving the regularized separation problem~\eqref{separation_da_smooth_reg} with Algorithm~\ref{alg:coordinate_minimization_smooth}. As indicated in \S\ref{sec:cutting_surface_algorithm}, for $m> 0$, the initial perturbation used in Algorithm~\ref{alg:cutting_surface_algorithm} is set as $d^{(0)} = \mu  \mathbbm{1}$, where $\mu = -\lambda_{\text{min}} (Z^T Q Z)$. In our previous paper~\cite{nrs:20}, we showed that it is possible to approximate $\lambda_{\text{min}} (Z^T Q Z)$ without having to explicit compute the basis $Z$. In particular, we proved that:
\begin{equation}
\label{eigenvalue_approximation}
\begin{aligned}
\lim_{\alpha \to\infty} \lambda_{{\text{min}}} (Q, I_n + \alpha A^T A) = \min(0, \lambda_{{\text{min}}} (Z^T Q Z)).
\end{aligned}
\end{equation}
Using this result, we set $\mu = \mu(\alpha) := -\lambda_{{\text{min}}} (Q, I_n + \alpha A^T A)$. From~\eqref{eigenvalue_approximation}, it follows that, for a sufficiently large value of $\alpha$, $\mu(\alpha)$ will converge to $0$ if~\eqref{problem_statement} is convex when restricted to the nullspace of $A$, or $-\lambda_{\text{min}} (Z^T Q Z)$ otherwise. To find such value of $\alpha$, we follow the iterative procedure presented in Section 5 of~\cite{nrs:20}. This is the same value of $\alpha$ that we use in Algorithm~\ref{alg:cutting_surface_algorithm} and the regularized separation problem~\eqref{separation_da_smooth_reg}. Note that this value of $\alpha$ determined at the root-node is used throughout the entire branch-and-bound tree.

Algorithm~\ref{alg:cutting_surface_algorithm} is only called at the root-node of the branch-and-bound tree. At nodes other than the root-node, instead of solving QCP relaxations, we solve QP relaxations of the form:
\begin{equation}
\label{qp_child_nodes}
\begin{array}{cl}
 \underset{(x,y) \in {\cal F}}{\text{{min}}}\;\; & x^T \left( Q + \text{diag}(d) \right) x + q^T x - d^T y
\end{array}
\end{equation}
where $d \in \D_{\alpha}$. We proceed as follows:

\begin{enumerate}[(i)]

\item We solve an initial relaxation of the form~\eqref{qp_child_nodes} by setting $d = d^{\text{parent}}$, where $d^{\text{parent}}$ is a diagonal perturbation originating from the parent node. Let $(\bar{x}, \bar{y})$ be the optimal solution of this initial QP relaxation and denote by $\bar{\mu}_{\text{QP}}$ its optimal objective function value.

\item  We use this relaxation solution to construct a new perturbation $d^{\text{new}}$ by solving the regularized separation problem~\eqref{separation_da_smooth_reg} with Algorithm~\ref{alg:coordinate_minimization_smooth}.

\item If $\bar{\mu}_{\text{QP}} - q^T \bar{x} < \bar{x}^T \left( Q + \text{diag}(d^{\text{new}}) \right) \bar{x} - {(d^{\text{new}})}^T \bar{y}$, we solve a second relaxation of the form~\eqref{qp_child_nodes} by setting $d = d^{\text{new}}$. Let $(\hat{x}, \hat{y})$ be an optimal solution of this relaxation and denote by $\hat{\mu}_{\text{QP}}$ its optimal objective function value. If $\hat{\mu}_{\text{QP}} \geq \bar{\mu}_{\text{QP}}$ (resp. $\hat{\mu}_{\text{QP}} < \bar{\mu}_{\text{QP}})$, we use the bound $\hat{\mu}_{\text{QP}}$ (resp. $\bar{\mu}_{\text{QP}}$) and pass $d^{\text{new}}$ (resp. $d^{\text{parent}}$) to the descendant nodes of the current node.

\end{enumerate}

For the descendant nodes of the root-node, we set $d^{\text{parent}} = d^{\text{root}}$, with $d^{\text{root}}$ being a surrogate perturbation vector determined as:
\begin{equation}
\label{surrogate}
\begin{array}{cl}
d^{\text{root}} = \dfrac{1}{\sum_{i = 1}^{\text{NC}} \nu_i} \sum\limits_{i = 1}^{\text{NC}} \nu_i d^{(i)}, 
\end{array}
\end{equation}
where $\text{NC}$ is the number of quadratic cuts generated during the execution of Algorithm~\ref{alg:cutting_surface_algorithm} at the root-node, $d^{(i)}$ are the diagonal perturbations, and $\nu_i$ are the optimal Lagrange multipliers associated with the quadratic constraints of the last root-node relaxation of the form~\eqref{qcp_das}. Note that $\text{NC} \leq \text{MaxNC}$, since Algorithm~\ref{alg:cutting_surface_algorithm} might terminate before reaching the maximum number of iterations $\text{MaxNC}$.

The decision to solve QP relaxations instead of QCP relaxations at nodes other than the root node is motivated by two key observations. First, the convex QCP relaxations of the form~\eqref{qcp_das} are at least an order of magnitude more expensive than the QP relaxations of the form~\eqref{qp_child_nodes}. Second, often a single quadratic cut of the form~\eqref{qcp_das_cuts} leads to a significant bound improvement. As a result, there is little gain in running Algorithm~\ref{alg:coordinate_minimization_smooth} more than once. Since the first QP relaxation constructed at the descendant nodes always uses a diagonal perturbation originating from the parent node, the monotonicity of the bounds generated during the branch-and-bound search is guaranteed.

To solve the eigenvalue and generalized eigenvalue problems that arise during the construction of the relaxations discussed above, we use the subroutines included in the linear algebra library LAPACK~\cite{1999lapack}. At a given node of the branch-and-bound tree, we only consider the variables that have not been fixed in order to construct our relaxations. We solve the convex QCP relaxations with IPOPT and the convex QP relaxations with CPLEX. The relaxation solutions returned by these solvers are used at a given node only if they satisfy the KKT conditions. At nodes at which~\eqref{problem_statement} is convex, we do not use the relaxations described in this section, and solve instead a continuous relaxation of~\eqref{problem_statement} subject to the variable bounds of the current node.

When all the variables in~\eqref{problem_statement} are binary, $l_i(x_i) = u_i(x_i) = x_i, \; \forall i \in [n]$, and as a result, we can eliminate the $y$ variables from~\eqref{qcp_das} and~\eqref{qp_child_nodes}. For continuous and general integer variables, we use $l_i(x_i) = x_i^2, \; i \in [n]$. For general integer variables, this choice of $l_i(x_i)$ does not lead to the convex hull of $\C_i$, but it allows us to construct a convex outer-approximation for this set.

Our implementation relies on the dynamic relaxation selection strategy proposed in~\cite{nrs:20} in order to adjust the frequencies at which we solve polyhedral and quadratic relaxations during the branch-and-bound search. Moreover, if~\eqref{problem_statement} is a binary quadratic program, we use the spectral braching variable selection rule introduced in~\cite{nrs:20}. The QP relaxations~\eqref{qp_child_nodes} are only used during the branch-and-bound search if, at the root-node, Algorithm~\ref{alg:cutting_surface_algorithm} gives a tighter bound than BARON's LP relaxation. Otherwise, we disable these QP relaxations and utilize the spectral relaxations proposed in~\cite{nrs:20}.

\section{Computational results}
\label{computational_results}

In this section, we investigate the impact of the quadratic relaxations proposed in this paper on the performance of branch-and-bound algorithms. We start in \S\ref{sec:results_relaxations}, by showing the effectiveness of the regularization approach discussed in \S\ref{separation_problem_analysis}. Then, in \S\ref{sec:results_baron}, we demonstrate the benefits of the implementation described in \S\ref{implementation} on the performance of BARON. Finally, in \S\ref{sec:results_solvers}, we present a comparison between several state-of-the-art global optimization solvers.

Our experiments are conducted under GAMS 30.1.0 on a 64-bit Intel Xeon X5650 2.66GHz processor with a single-thread. For the experiments described in \S\ref{sec:results_relaxations}, we solve the QP relaxations with CPLEX 12.10, the QCP relaxations with IPOPT 3.12 and the SDP relaxations with MOSEK 9.1.9. For the experiments considered in \S\ref{sec:results_baron}--\ref{sec:results_solvers}, we consider the following global optimization solvers: ANTIGONE 1.1, BARON 20.4, COUENNE 0.5, CPLEX 12.10, GUROBI 9.0, LINDOGLOBAL 12.0 and SCIP 6.0. In this case we: (i) run all solvers with relative/absolute tolerances of 10\textsuperscript{-6} and a time limit of 500 seconds, and (ii) set the CPLEX option {\tt optimalitytarget} to 3 and the GUROBI option {\tt nonconvex} to 2 to ensure that these two solvers search for a globally optimal solution. We use default settings for other algorithmic parameters.

For our experiments, we consider a large test set consisting of 960 Cardinality Binary Quadratic Programs (CBQPs), 30 Quadratic Semi-Assignment Problems (QSAPs), 246 Box-Constrained Quadratic Programs (BoxQPs), and 315 Equality Integer Quadratic Programs (EIQPs). These test libraries are described in detail in~\cite{qp_miqp_tests}.

\subsection{Experiments with root-node relaxations}
\label{sec:results_relaxations}

In this section, we provide a numerical comparison between two versions of Algorithm~\ref{alg:cutting_surface_algorithm} which differ in the separation procedure used to derive the quadratic cuts of the form~\eqref{qcp_das_cuts}. We use the following notation for the relaxations considered in this comparison:

\begin{enumerate}[(i)]

\item EIG: Eigenvalue relaxation, obtained by setting $\mu = -\lambda_{\text{min}} (Q)$ in~\eqref{qp_init}.

\item EIGNS: Eigenvalue relaxation in the nullspace of $A$, obtained by setting $\mu = -\lambda_{\text{min}} (Z^T Q Z)$ in~\eqref{qp_init}.

\item SDPd: SDP relaxation~\eqref{sdp_d}.

\item SDPda: SDP relaxation~\eqref{sdp_da}.

\item QCPnsreg: QCP relaxation~\eqref{qcp_das}, where the quadratic cuts~\eqref{qcp_das_cuts} are obtained by solving~\eqref{separation_da_nonsmooth_reg} with the algorithm proposed in~\cite{d:16} (see Algorithm~\ref{alg:coordinate_minimization_nonsmooth} in \S\ref{appendix}).

\item QCPsreg: QCP relaxation~\eqref{qcp_das}, where the quadratic cuts~\eqref{qcp_das_cuts} are obtained by solving~\eqref{separation_da_smooth_reg} with Algorithm~\ref{alg:coordinate_minimization_smooth}.

\end{enumerate}

In our experiments, we run the two versions of Algorithm~\ref{alg:cutting_surface_algorithm} by setting the maximum number of iterations $\text{MaxNC}$ to 20. We first compare these relaxations by selecting one instance from each of the four libraries that are part of the test set. The results of this comparison are presented in Figures~\ref{fig:CBQP_example}--\ref{fig:EIQP_example}. In these figures, we plot the lower bounds of the QCP relaxations against the number of iterations, and use a dashed vertical line to indicate the iteration number at which each version of Algorithm~\ref{alg:cutting_surface_algorithm} terminates. We use horizontal lines to represent the lower bounds provided by the spectral and SDP relaxations. As seen in the figures, the quadratic cuts derived by solving~\eqref{separation_da_smooth_reg} with Algorithm~\ref{alg:coordinate_minimization_smooth} lead to significantly tighter QCP relaxations than the quadratic cuts obtained through the solution of~\eqref{separation_da_nonsmooth_reg} with the algorithm proposed in~\cite{d:16}. Under our approach, a few quadratic cuts are sufficient in order to obtain a good approximation of the lower bounds given by the SDP relaxations.

\begin{figure}[htp]
\centering
\subfloat[Selected CBQP instance.\label{fig:CBQP_example}]{%
  \includegraphics[scale=0.12]{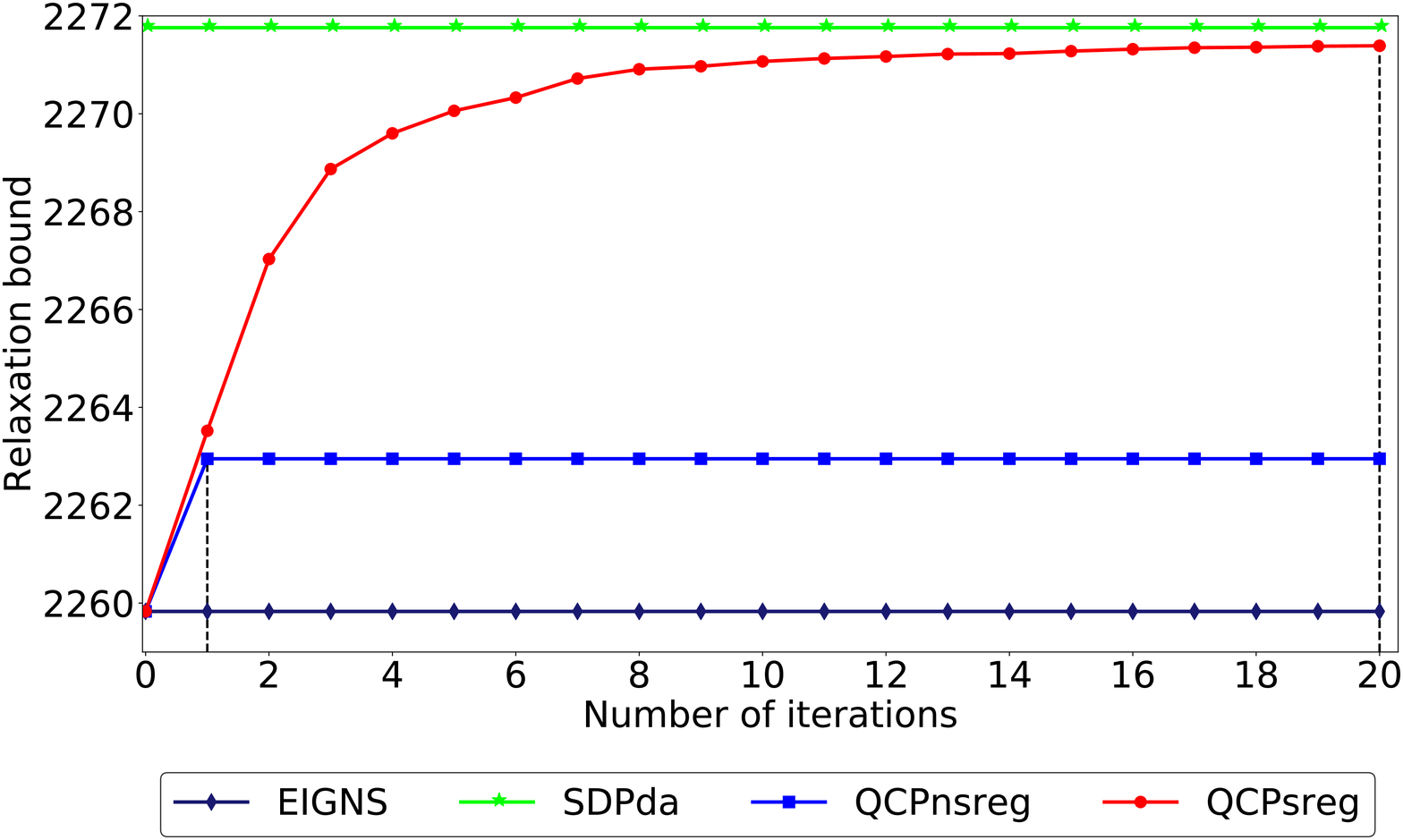}%
}
\subfloat[Selected QSAP instance.\label{fig:QSAP_example}]{%
  \includegraphics[scale=0.12]{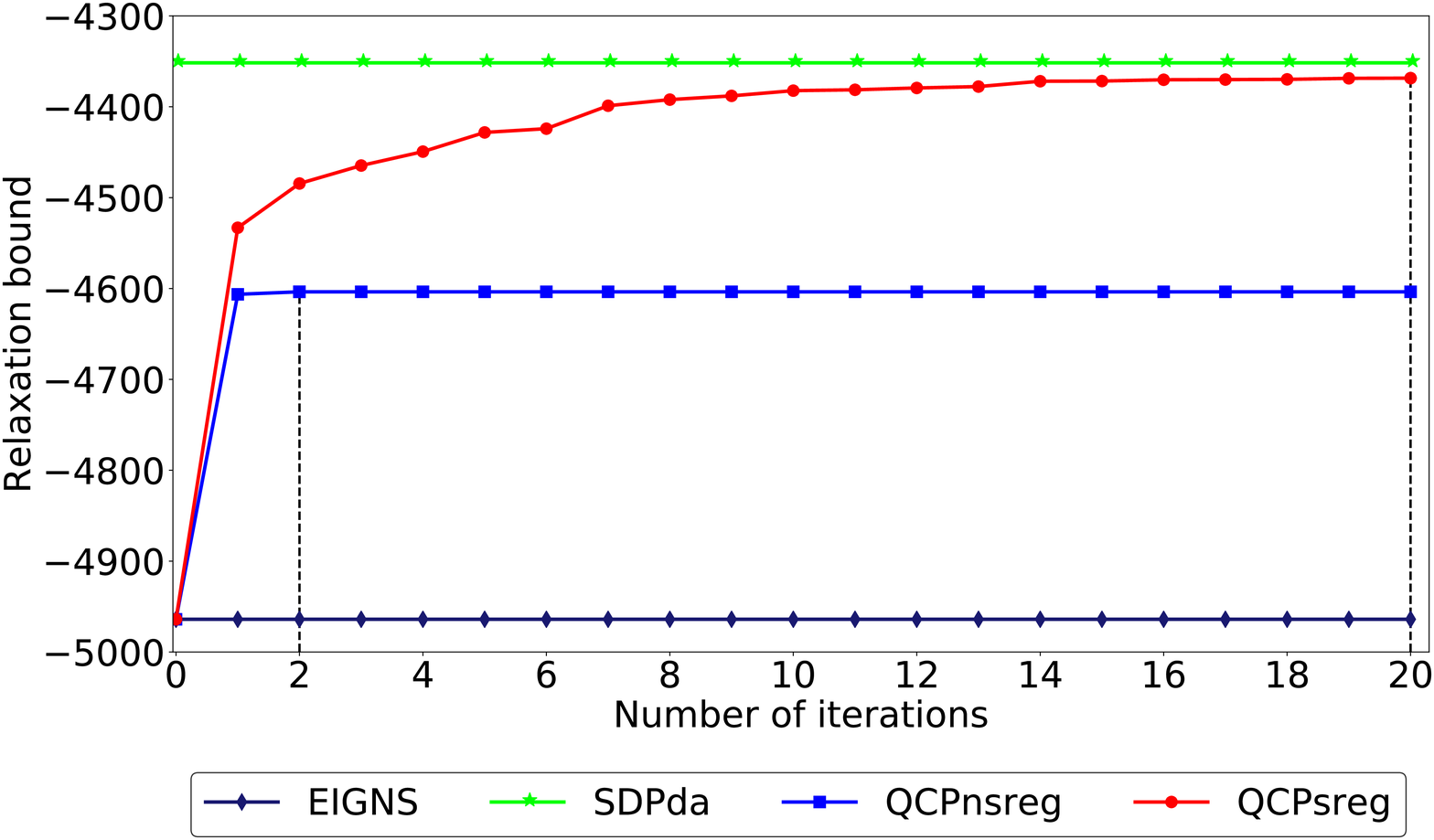}%
} \\
\subfloat[Selected BoxQP instance.\label{fig:BoxQP_example}]{%
  \includegraphics[scale=0.12]{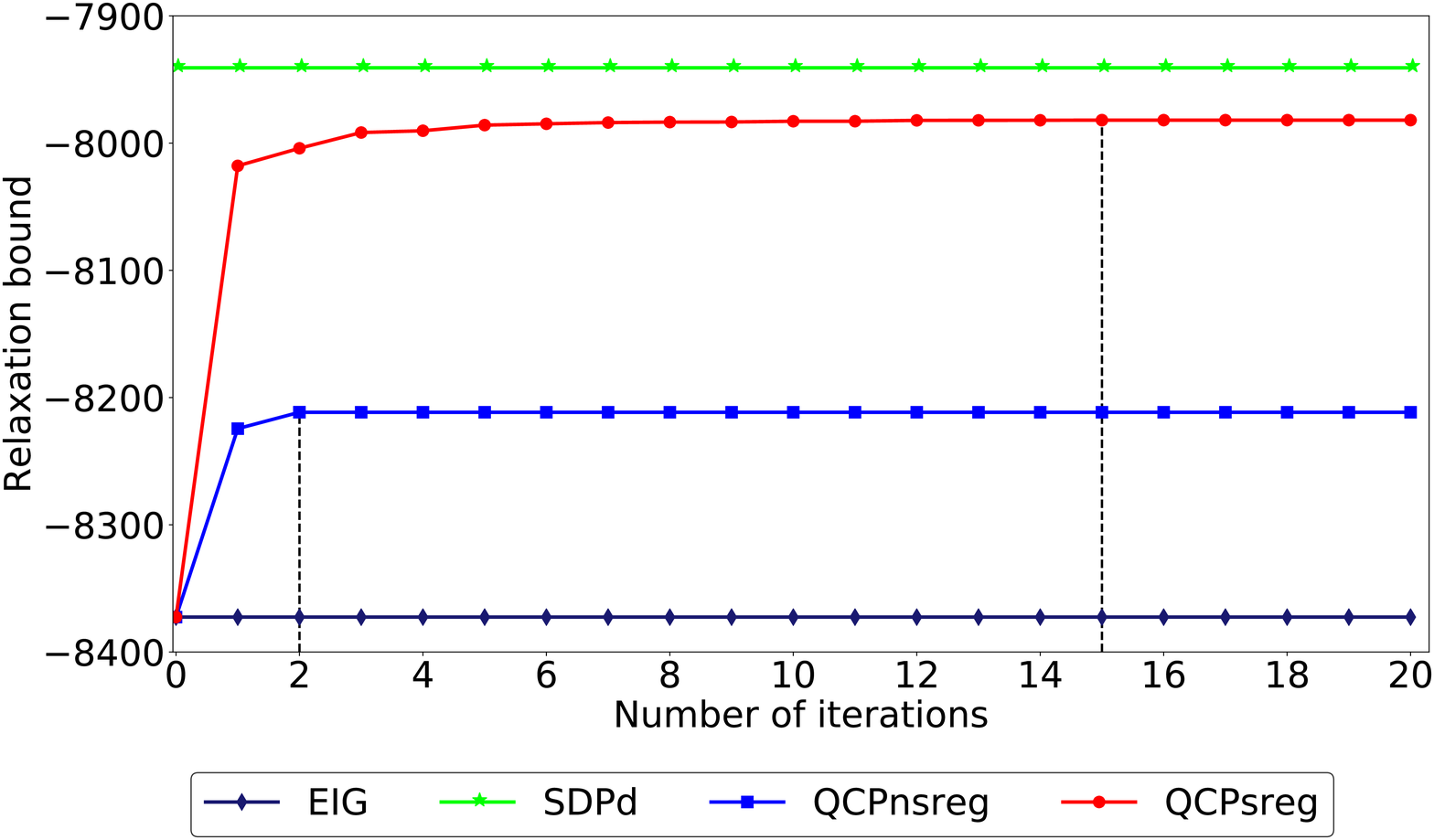}%
}
\subfloat[Selected EIQP instance.\label{fig:EIQP_example}]{%
  \includegraphics[scale=0.12]{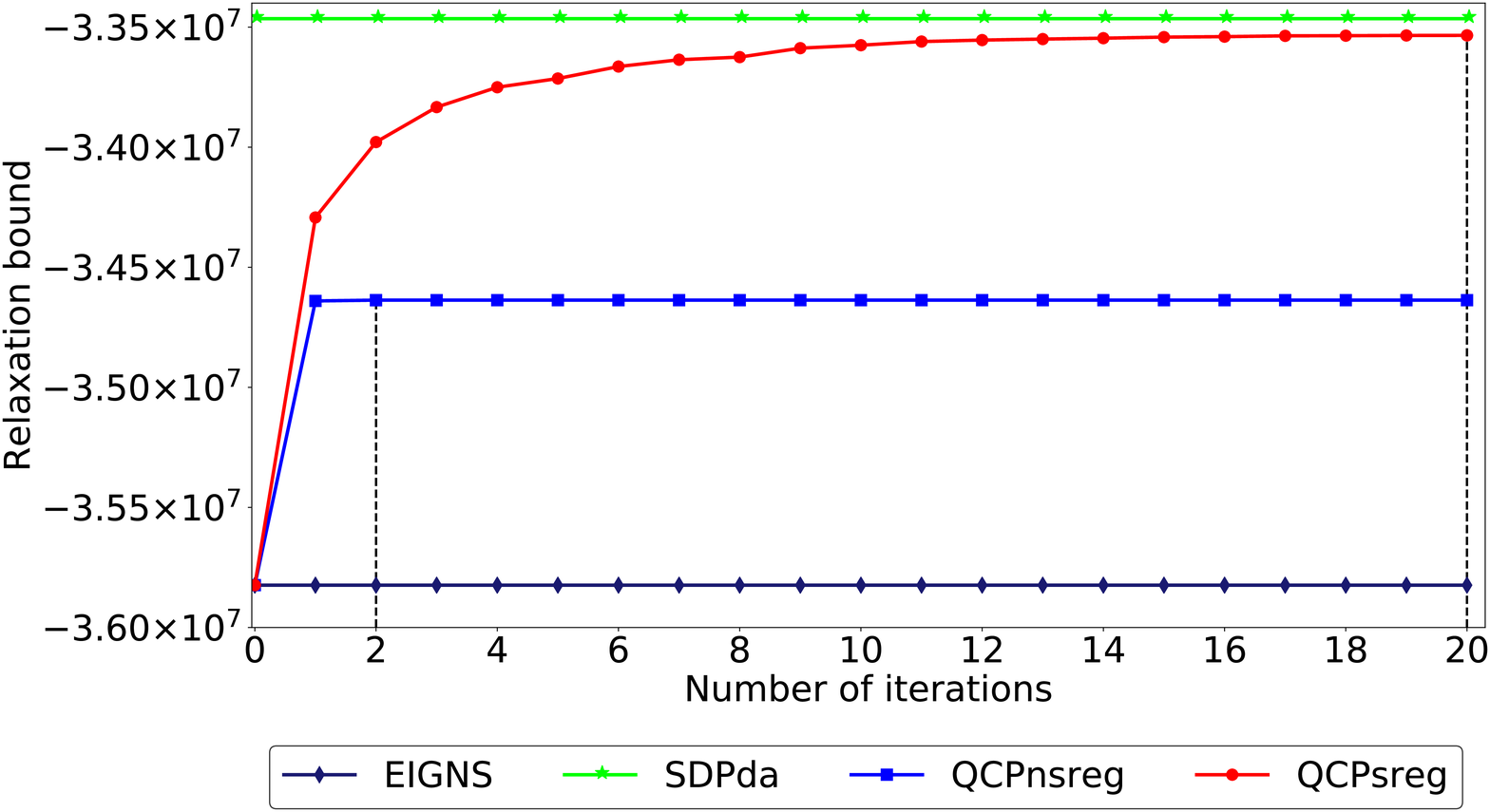}%
}
\caption{Comparison between the two versions of the cutting surface algorithm for selected test problems.}
\label{fig:root_node_gaps_selected_instances}
\end{figure}

Now, we compare the two versions of Algorithm~\ref{alg:cutting_surface_algorithm} by considering all the instances contained in each of the test libraries. To this end, we construct performance profiles based on the following root-node relaxation gap:
\begin{equation}
\label{root_node_gap}
{\text{Root gap}} = \left(\dfrac{\mu_{\text{SDP}} - \mu_{\text{QCP}}}{\mu_{\text{SDP}} - \mu_{\text{QP}}}\right) \times 100
\end{equation}
where $\mu_{\text{QCP}}$ is the lower bound given by the last QCP relaxation solved in a given version of Algorithm~\ref{alg:cutting_surface_algorithm}, and $\mu_{\text{QP}}$ and $\mu_{\text{SDP}}$ denote the lower bounds provided by the corresponding spectral and SDP relaxations. A smaller gap represents a better approximation of the corresponding SDP bound.

The performance profiles are presented in Figures~\ref{fig:CBQP_gaps}--\ref{fig:EIQP_gaps}. These profiles show the percentage of models for which the gap defined in~\eqref{root_node_gap} is below a certain threshold. Clearly, the QCP relaxations constructed via our separation procedure provide significantly smaller gaps than the QCP relaxations derived with the separation algorithm proposed in~\cite{d:16}. 

\begin{figure}[htp]
\centering
\subfloat[960 CBQP instances.\label{fig:CBQP_gaps}]{%
  \includegraphics[scale=0.14]{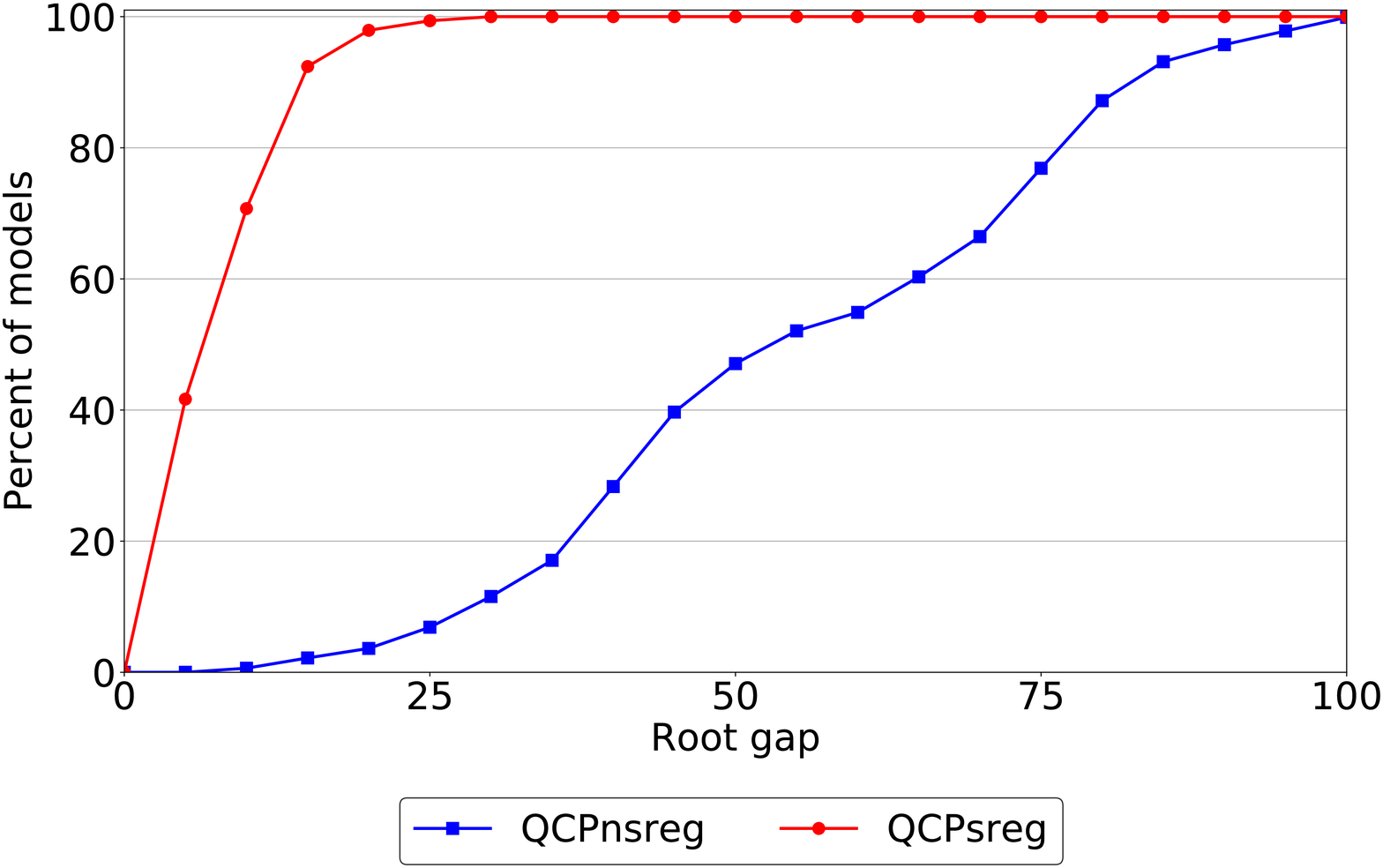}%
}
\subfloat[30 QSAP instances.\label{fig:QSAP_gaps}]{%
  \includegraphics[scale=0.14]{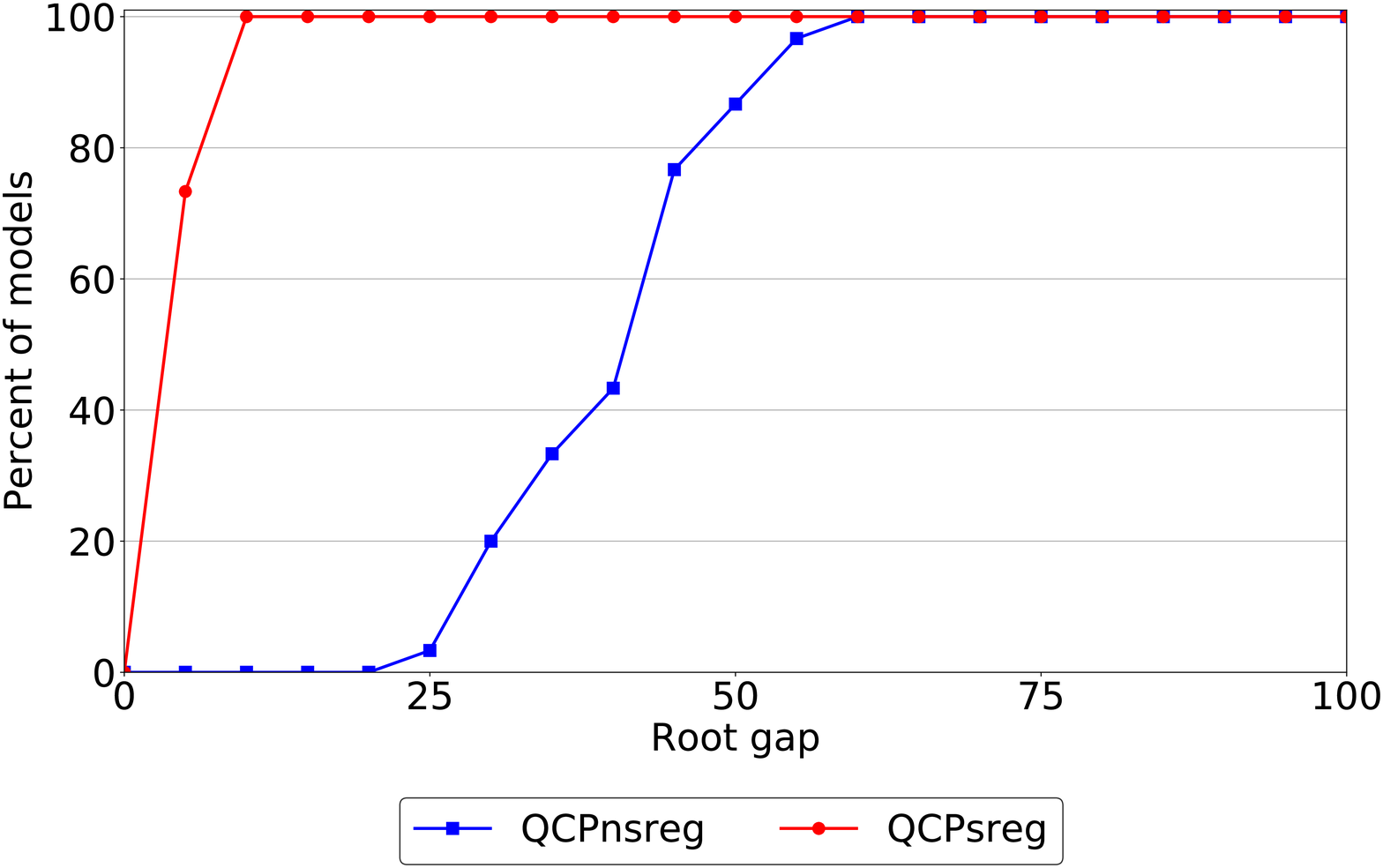}%
} \\
\subfloat[246 BoxQP instances.\label{fig:BoxQP_gaps}]{%
  \includegraphics[scale=0.14]{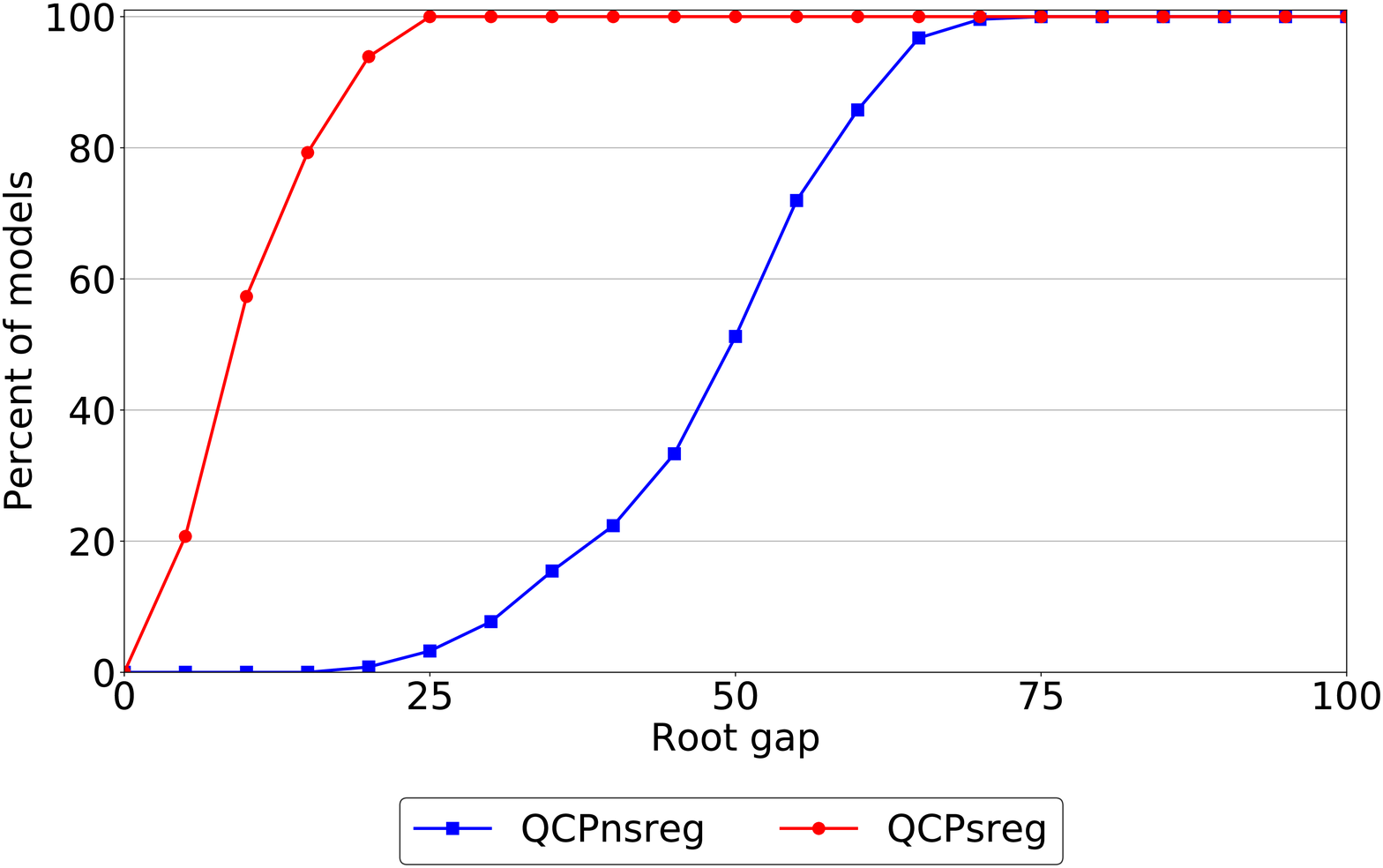}%
}
\subfloat[315 EIQP instances.\label{fig:EIQP_gaps}]{%
  \includegraphics[scale=0.14]{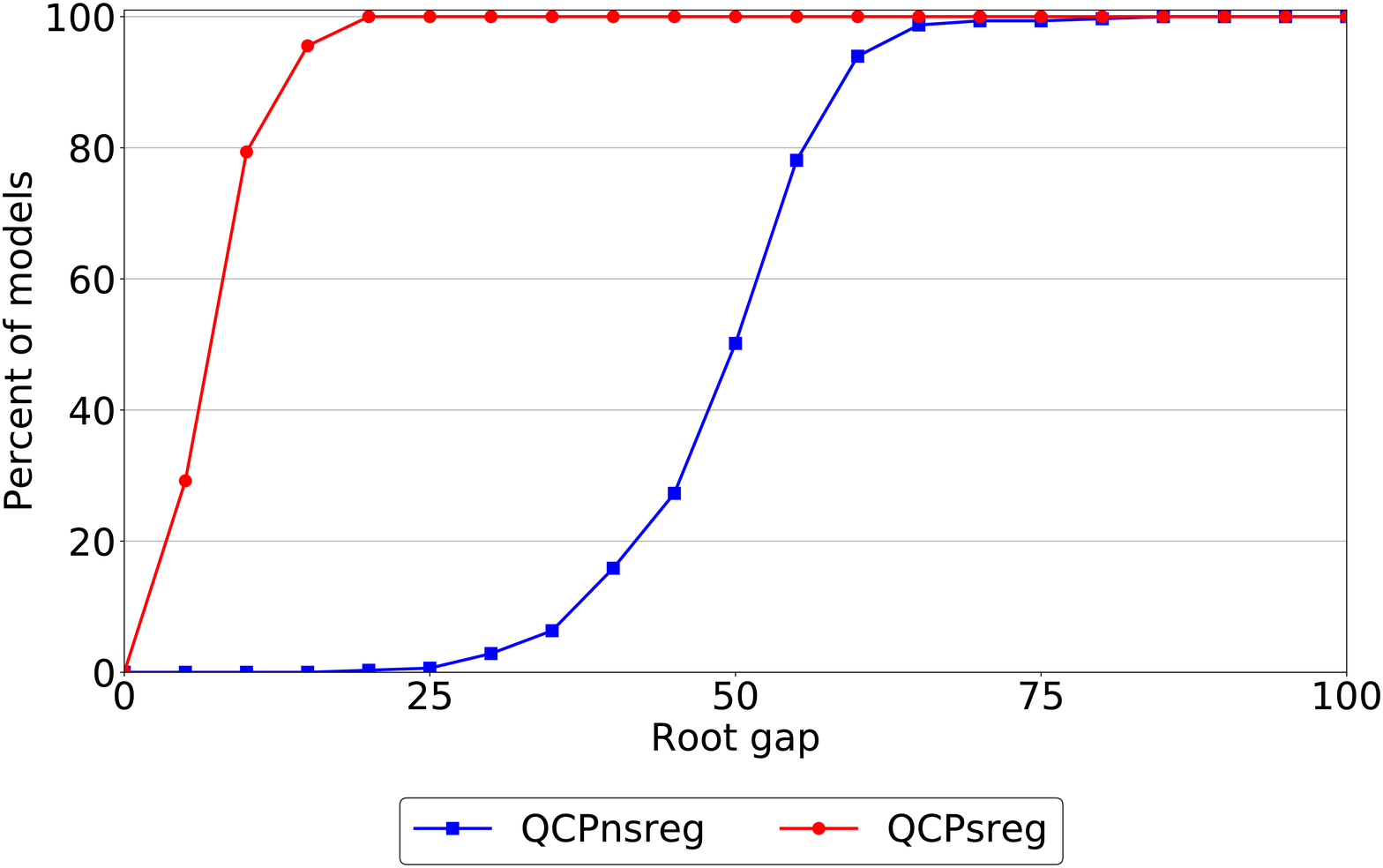}%
}
\caption{Comparison between the two versions of the cutting surface algorithm for all test problems.}
\label{fig:root_node_gaps_all_instances}
\end{figure}

\subsection{Impact of the implementation on BARON's performance}
\label{sec:results_baron}

In this section, we demonstrate the benefits the relaxations introduced in this paper on the performance of the global optimization solver BARON. In our experiments, we compare the following versions of BARON 20.4:

\begin{enumerate}[(i)]

\item BARONnoqc: Version of BARON for which we disable the quadratic relaxations proposed in this paper. Note that this version of BARON includes the spectral relaxations introduced in~\cite{nrs:20}.

\item BARON: Version of BARON which uses the quadratic relaxations proposed in this paper as described in \S\ref{implementation}.

\end{enumerate}

In this comparison, we exclude from the test set all problems for which the new quadratic relaxations are not activated by BARON during the branch-and-bound search (367 instances). We also eliminate problems that can be solved trivially by both solvers (62 instances). A problem is regarded as trivial if it can be solved by both solvers in less than one second. After eliminating all of these problems from the original test set, we obtain a new test set consisting of 1122 instances.

We first consider the nontrivial problems that are solved to global optimality by at least one of the two the versions of the solver (259 instances). For this analysis, we compare the performance of the two solvers by considering the following metrics: (i) CPU time, (ii) total number of nodes in the branch-and-bound tree (iterations), and (iii) maximum number of nodes stored in memory (memory). In this comparison, we say that the two solvers perform similarly if any of these metrics are within 10\% of each other. The results are presented in Figures~\ref{fig:baron_vs_baron_noqc_cpu_times}--\ref{fig:baron_vs_baron_noqc_memory}. As the figures indicate, for nearly 90\% of the problems considered in this comparison, our implementation leads to a significant reduction in CPU time, number of iterations, and memory requirements. There are a few instances for which BARONnoqc performs slightly better than BARON. These are relatively easy instances which can be solved to global optimality by both solvers in less than 10 seconds. For these instances, the relaxations proposed in this paper provide tighter lower bounds, but the increased computational cost associated with the construction and solution of these relaxations leads to some degradation in performance.

Now, we consider the nontrivial problems that neither of the two solvers are able to solve to global optimality within the time limit (863 instances). In this case, we analyze the performance of these solvers by comparing the relative gaps reported at termination, which are determined as:
\begin{equation}
\label{optimality_gap}
\text{Relative gap} = \left(\dfrac{\mu_{\text{UBD}} - \mu_{\text{LBD}}}{\max(|\mu_{\text{LBD}}|, 10^{-3})}\right) \times 100
\end{equation}
where $\mu_{\text{LBD}}$ and $\mu_{\text{UBD}}$ respectively denote the lower and upper bounds reported by a given solver at termination. In this comparison, we say that two solvers obtain similar gaps if their relative gaps are within 10\% of each other. The results are presented in Figure~\ref{fig:baron_vs_baron_noqc_gaps}. As seen in the figure, for more than 90\% of the problems considered in this comparison, BARON reports significantly smaller gaps than BARONnoqc.

\begin{figure}[htp]
\centering
\subfloat[CPU times (259 nontrivial instances).\label{fig:baron_vs_baron_noqc_cpu_times}]{%
  \includegraphics[scale=0.17]{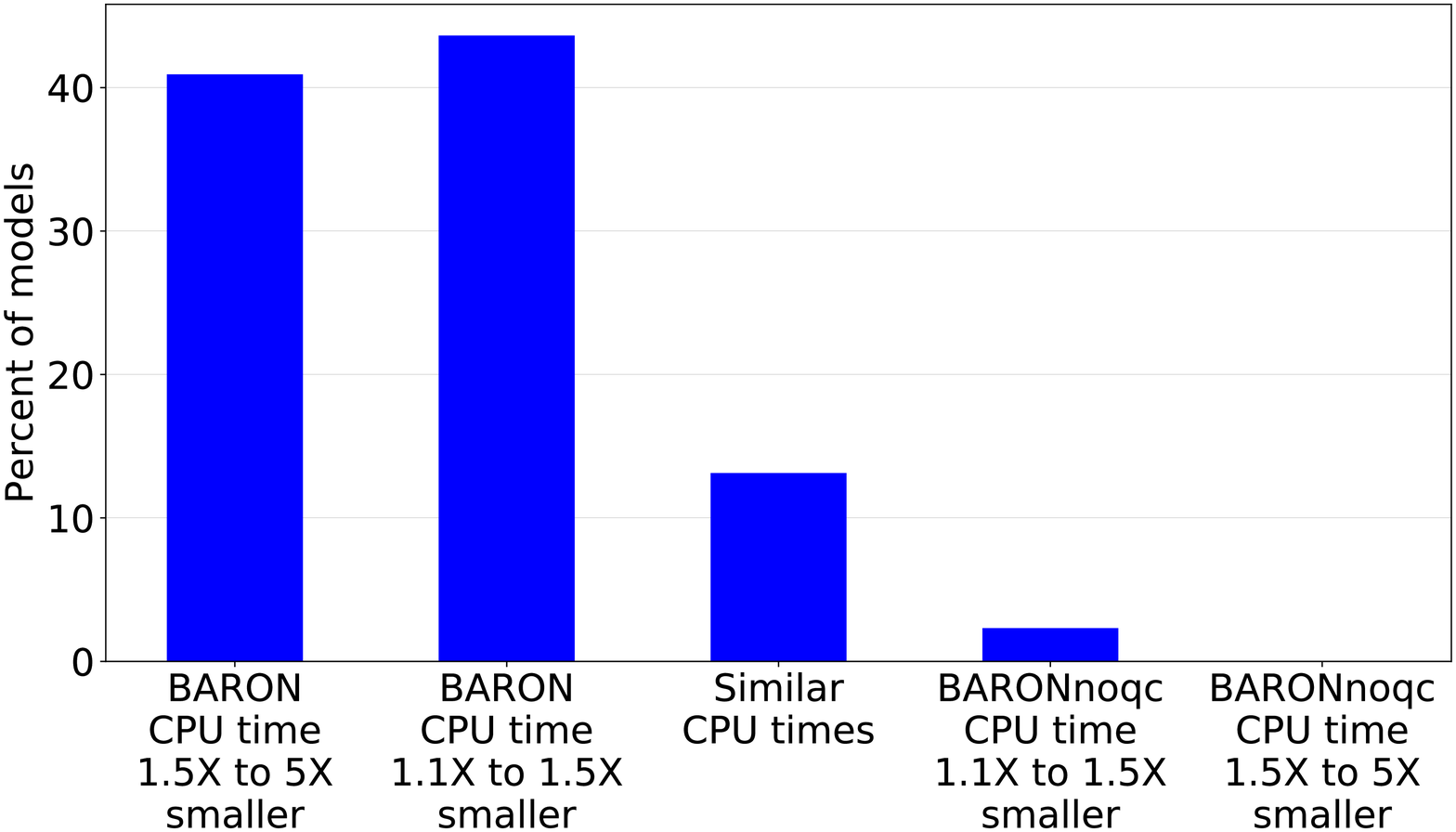}%
}
\subfloat[Iterations (259 nontrivial instances).\label{fig:baron_vs_baron_noqc_iterations}]{%
  \includegraphics[scale=0.17]{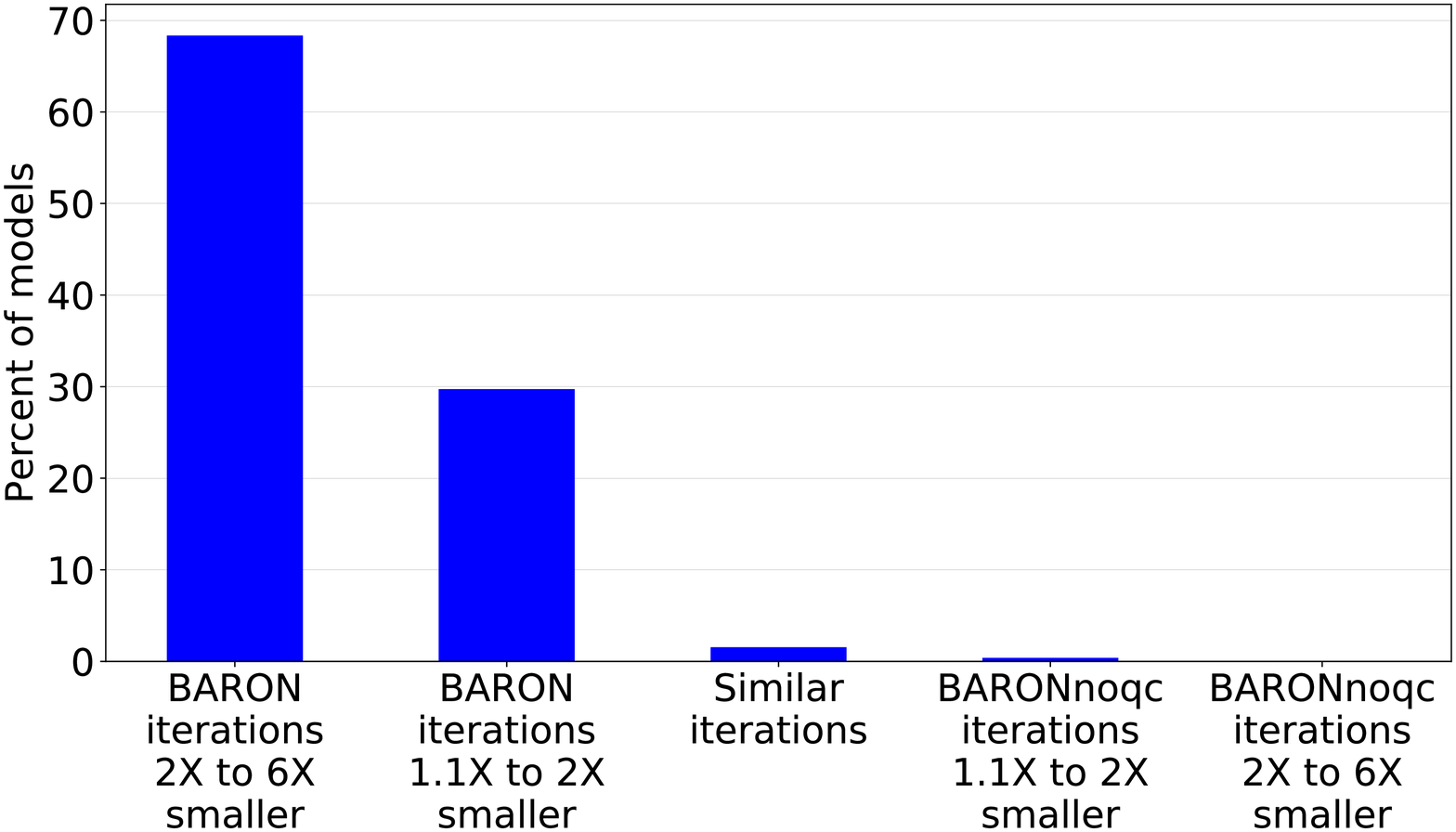}%
} \\
\subfloat[Memory (259 nontrivial instances).\label{fig:baron_vs_baron_noqc_memory}]{%
  \includegraphics[scale=0.17]{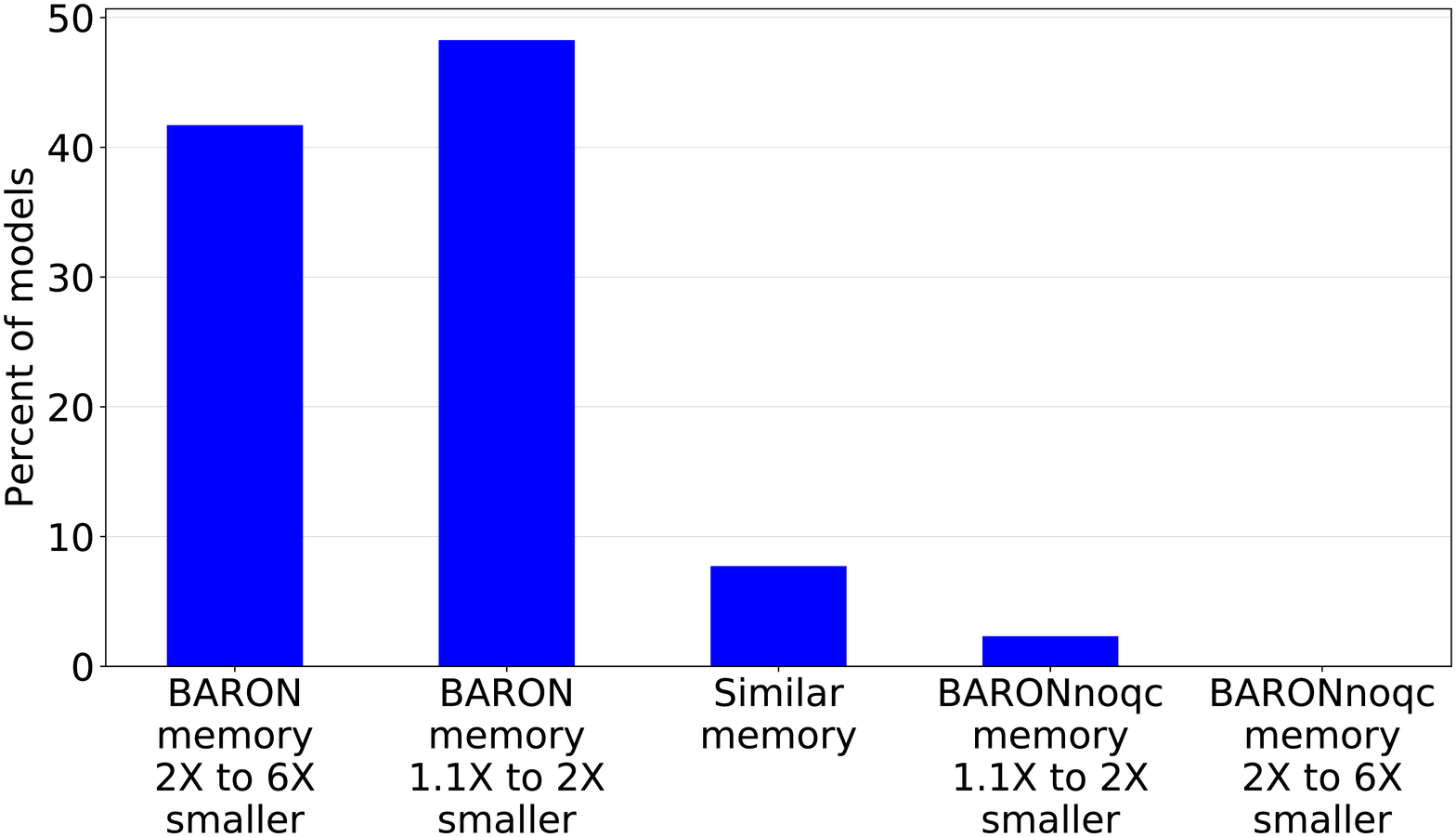}%
}
\subfloat[Relative gaps (863 nontrivial instances).\label{fig:baron_vs_baron_noqc_gaps}]{%
  \includegraphics[scale=0.17]{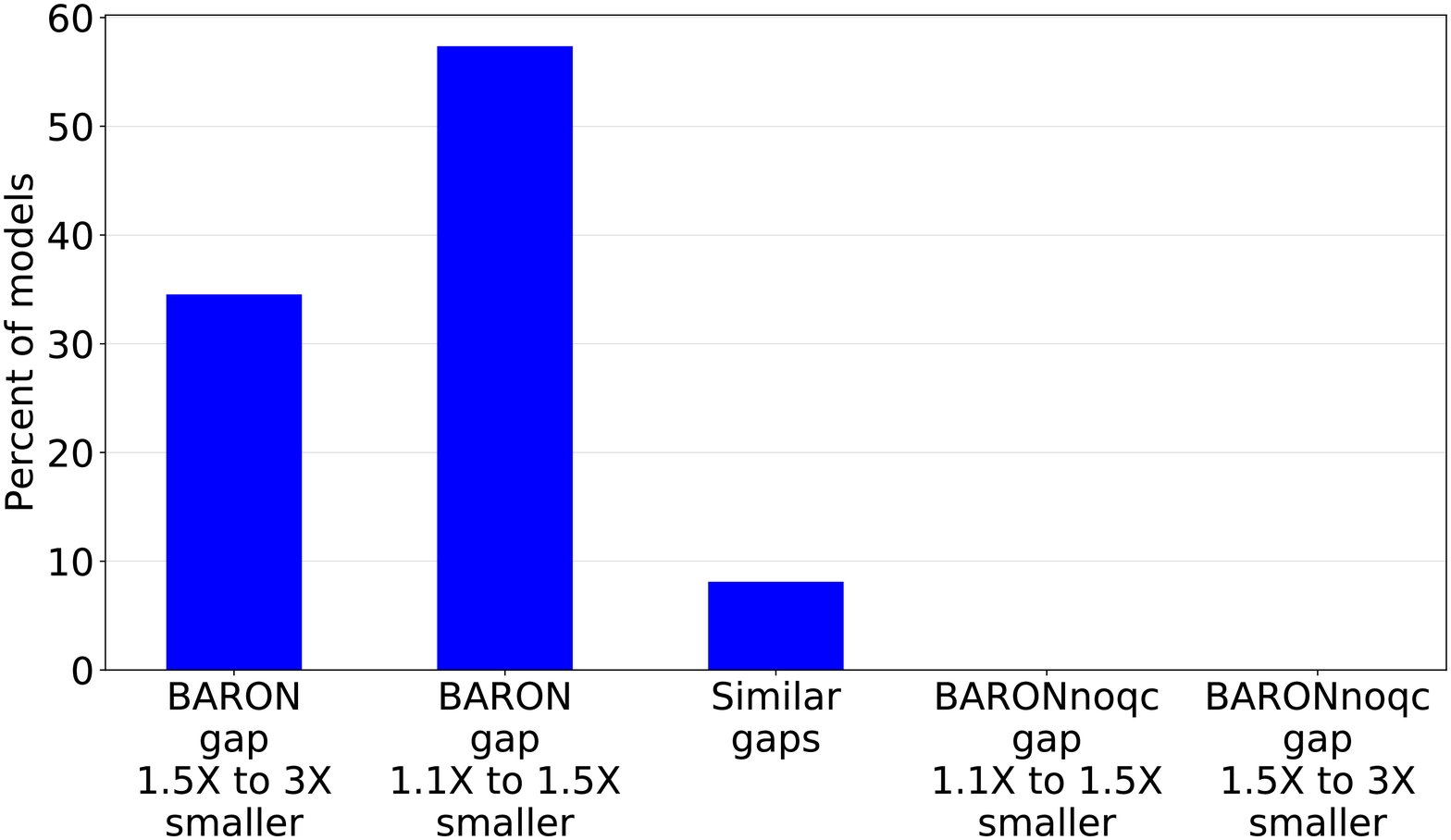}%
}
\caption{One-to-one comparison between BARON and BARONnoqc.}
\label{fig:baron_vs_baron_noqc}
\end{figure}

We finish this section by providing a more detailed analysis of the results presented in Figures~\ref{fig:baron_vs_baron_noqc_cpu_times}--\ref{fig:baron_vs_baron_noqc_gaps}. To this end, we calculate the shifted geometric means for each of the metrics considered in these figures. We use a shift factor of 1 for the CPU times and relative gaps, and a shift factor of 10 for the total number of nodes and maximum number of nodes stored in memory. The results are presented in Table~\ref{tab:baron_vs_baron_noqc}. As seen in the table, BARON significantly outperforms BARONnoqc for each of the considered metrics.

\begin{table}[htbp]
\centering
\caption{Shifted geometric means for BARON and BARONnoqc.}
\label{tab:baron_vs_baron_noqc}
\begin{tabular}{ccccc}
\toprule
Solver           & \begin{tabular}[c]{@{}c@{}}CPU Time\\ (259 instances)\end{tabular} & \begin{tabular}[c]{@{}c@{}}Iterations\\ (259 instances)\end{tabular} & \begin{tabular}[c]{@{}c@{}}Memory\\ (259 instances)\end{tabular} & \begin{tabular}[c]{@{}c@{}}Relative gaps\\ (863 instances)\end{tabular} \\
\midrule
BARONnoqc        & 14.0                                                               & 926.1                                                                & 25.9                                                             & 11.8                                                                    \\
BARON            & 9.9                                                                & 391.7                                                                & 13.5                                                             & 8.7                                                                     \\
\midrule
Improvement (\%) & 29.6                                                               & 57.7                                                                 & 47.7                                                             & 25.7                                                                   \\
\bottomrule          
\end{tabular}
\end{table}

\subsection{Comparison between global optimization solvers}
\label{sec:results_solvers}

We start this section by comparing several state-of-the-art global optimization solvers through performance profiles. For instances for which a solver can prove global optimality within the time limit, we plot the percentage of models solved within a certain amount of time. For problems for which a solver cannot prove global optimality within the time limit, we plot the percentage of models for which the relative gap defined in~\eqref{optimality_gap} is below a given threshold. These profiles are shown in Figures~\ref{fig:global_solvers_profiles_CBQP}--\ref{fig:global_solvers_profiles_EIQP}. As seen in these figures, BARON performs well relative to the other solvers. For the CBQP and QSAP instances, BARON is faster than the other solvers and solves many more problems to global optimality. For problems for which global optimality cannot be proven within the time limit, BARON terminates with smaller gaps than the other solvers.

\begin{figure}[htp]
\centering
\subfloat[960 CBQP instances. \label{fig:global_solvers_profiles_CBQP}]{%
  \includegraphics[scale=0.14]{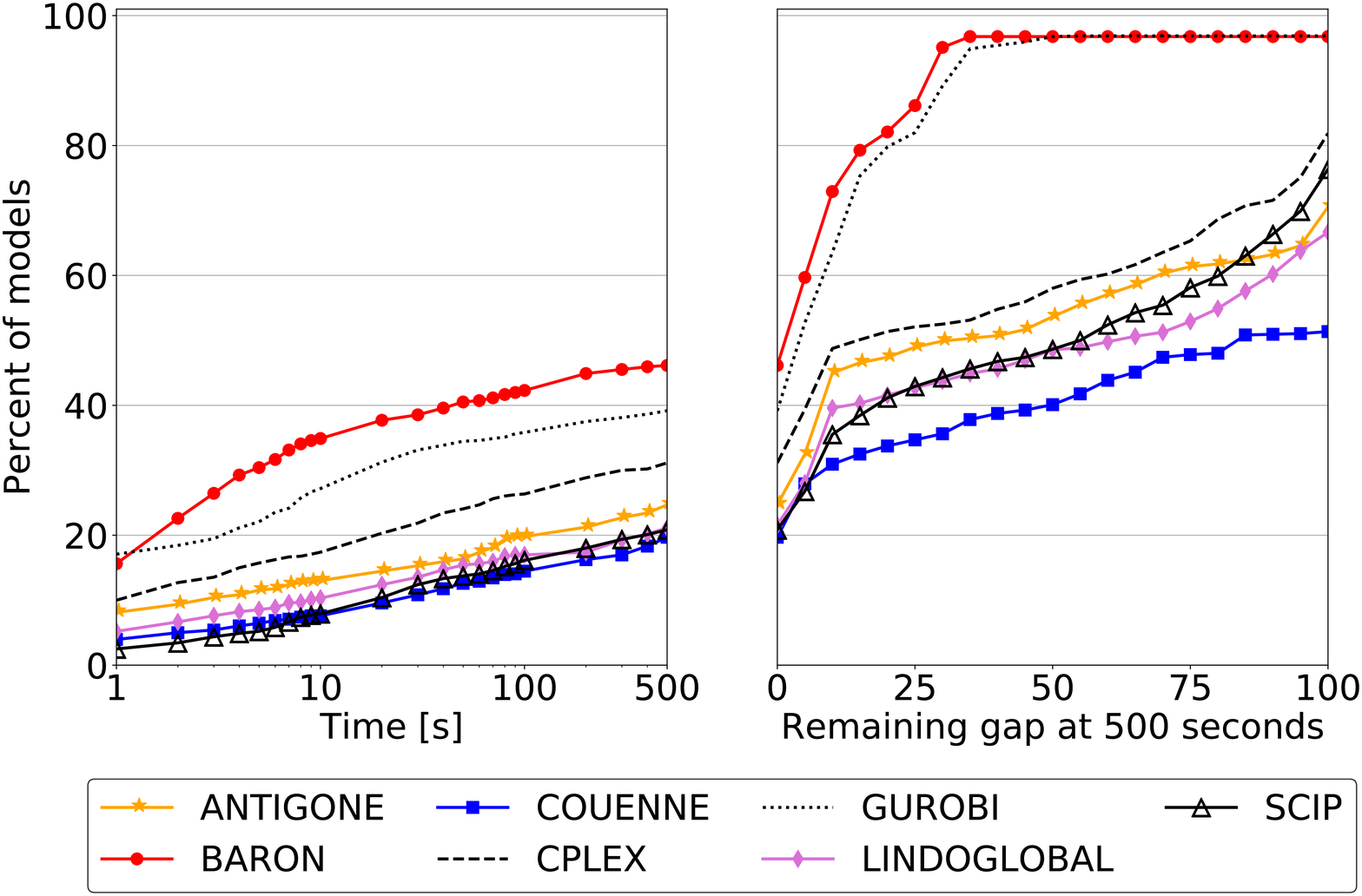}%
}
\subfloat[30 QSAP instances. \label{fig:global_solvers_profiles_QSAP}]{%
  \includegraphics[scale=0.14]{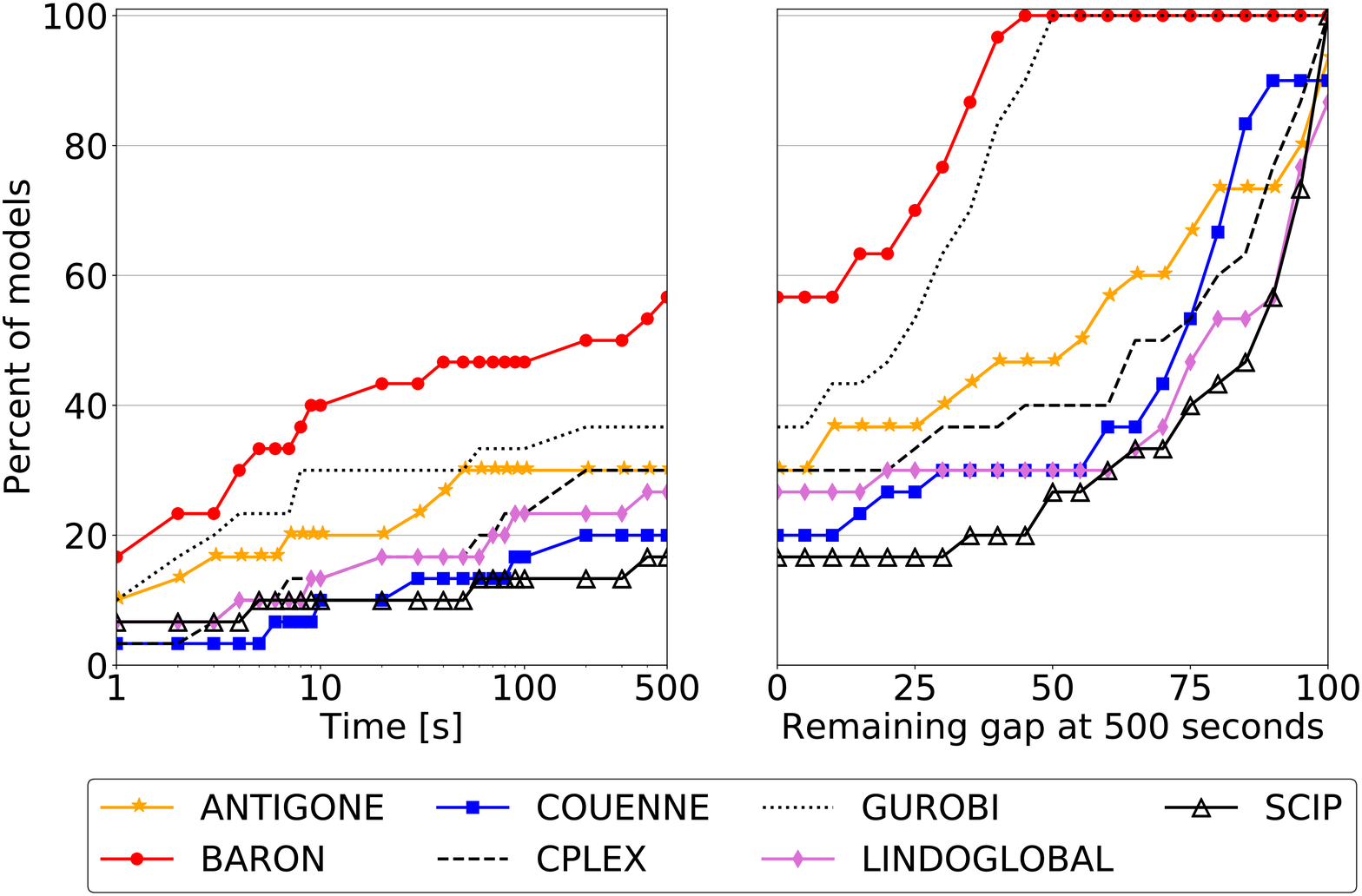}%
} \\
\subfloat[246 BoxQP instances. \label{fig:global_solvers_profiles_BoxQP}]{%
  \includegraphics[scale=0.14]{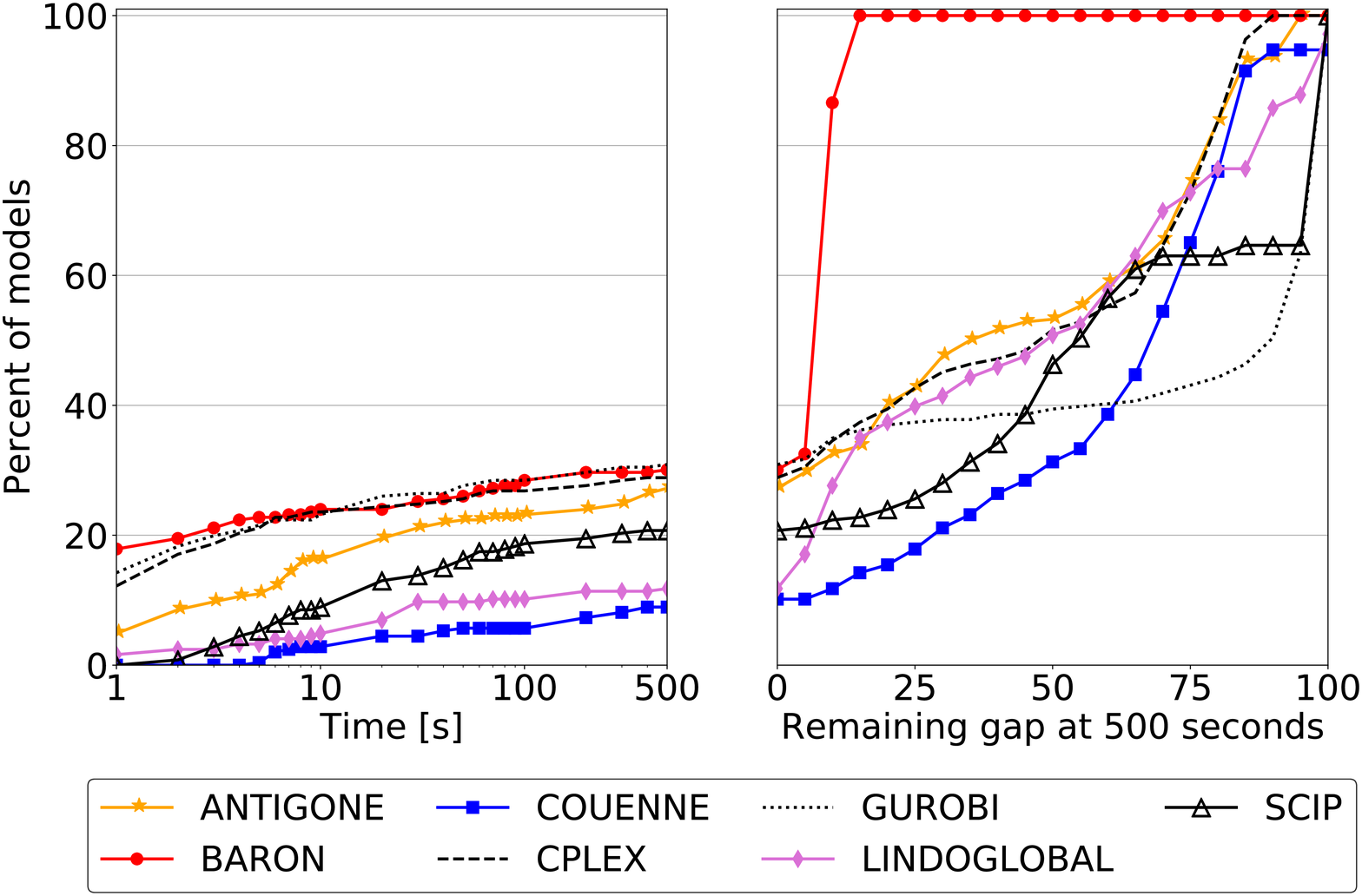}%
}
\subfloat[315 EIQP instances. \label{fig:global_solvers_profiles_EIQP}]{%
  \includegraphics[scale=0.14]{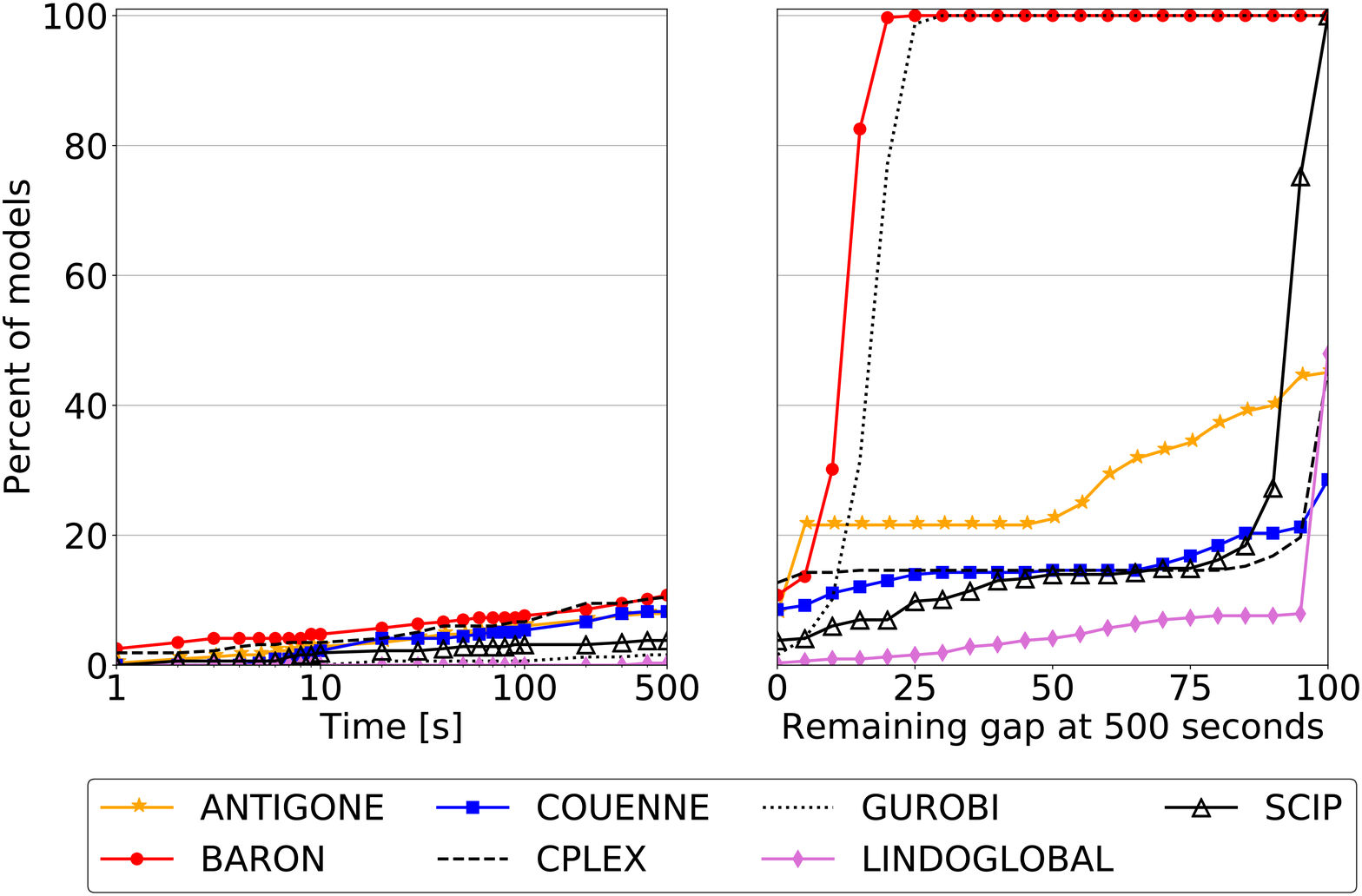}%
}
\caption{Comparison between global optimization solvers.}
\label{fig:global_solvers_profiles}
\end{figure}

Next, we provide a more detailed analysis involving BARON, CPLEX and GUROBI. We use the same type of bar plots employed in \S\ref{sec:results_baron}. For a one-to-one comparison between BARON and CPLEX, we eliminate from the test set all problems solved trivially by both solvers (124 instances), obtaining a new test set with 1427 instances. In Figure~\ref{fig:baron_vs_cplex_cpu_times}, we consider the nontrivial problems solved to global optimality by at least one of the two solvers (453 instances), whereas in Figure~\ref{fig:baron_vs_cplex_gaps}, we consider nontrivial problems that neither solver can solve to global optimality within the time limit (974 instances). As both figures indicate, BARON performs significantly better than CPLEX. For nearly 90\% of the instances considered in Figure~\ref{fig:baron_vs_cplex_cpu_times}, BARON is at least 1.1 times faster than CPLEX, whereas for more than 98\% of the instances considered in Figure~\ref{fig:baron_vs_cplex_gaps}, BARON reports significantly smaller gaps than CPLEX.

\begin{figure}[htp]
\centering
\subfloat[CPU times (453 nontrivial instances). \label{fig:baron_vs_cplex_cpu_times}]{%
  \includegraphics[scale=0.17]{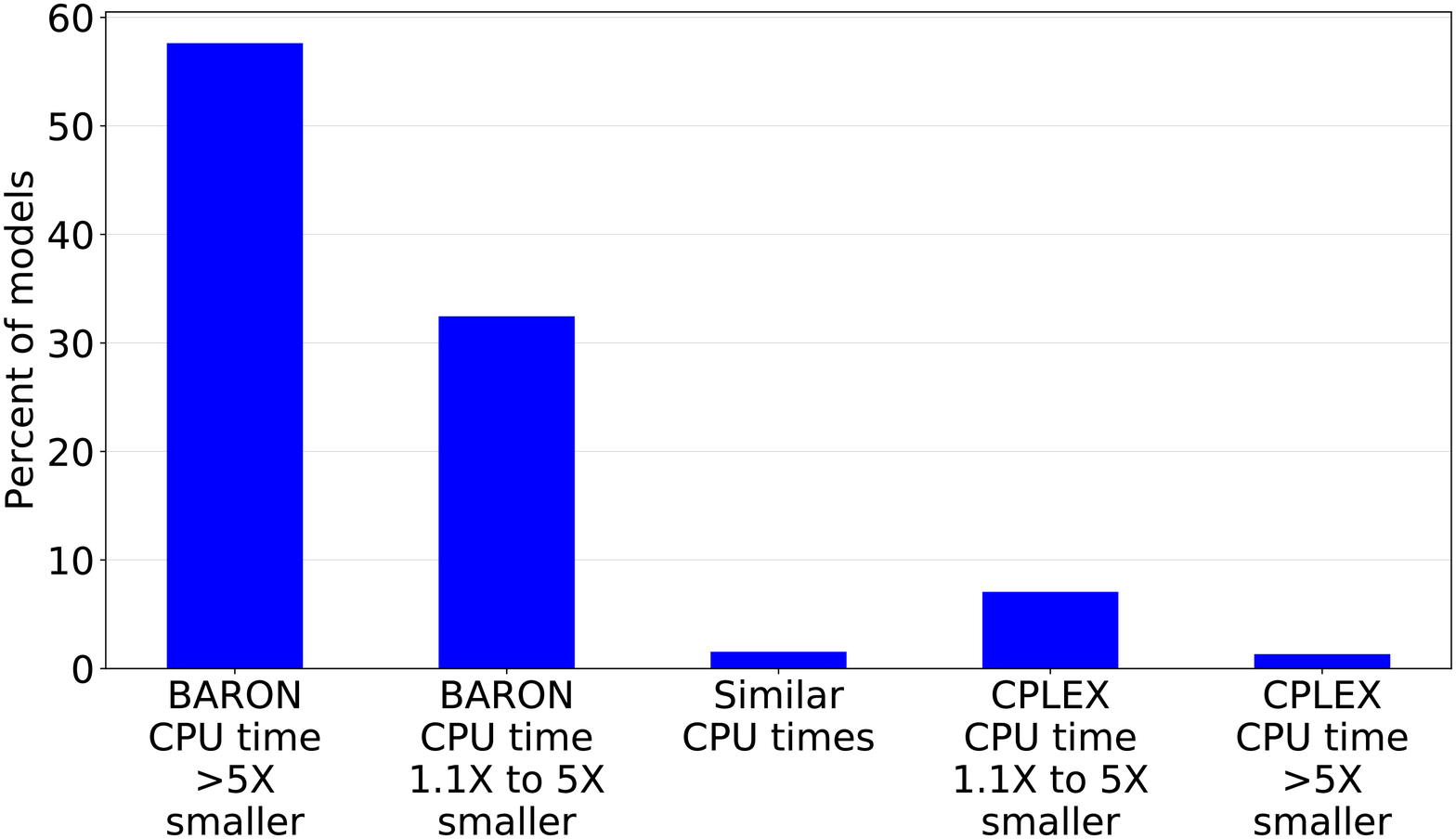}%
}
\subfloat[Relative gaps (974 nontrivial instances). \label{fig:baron_vs_cplex_gaps}]{%
  \includegraphics[scale=0.17]{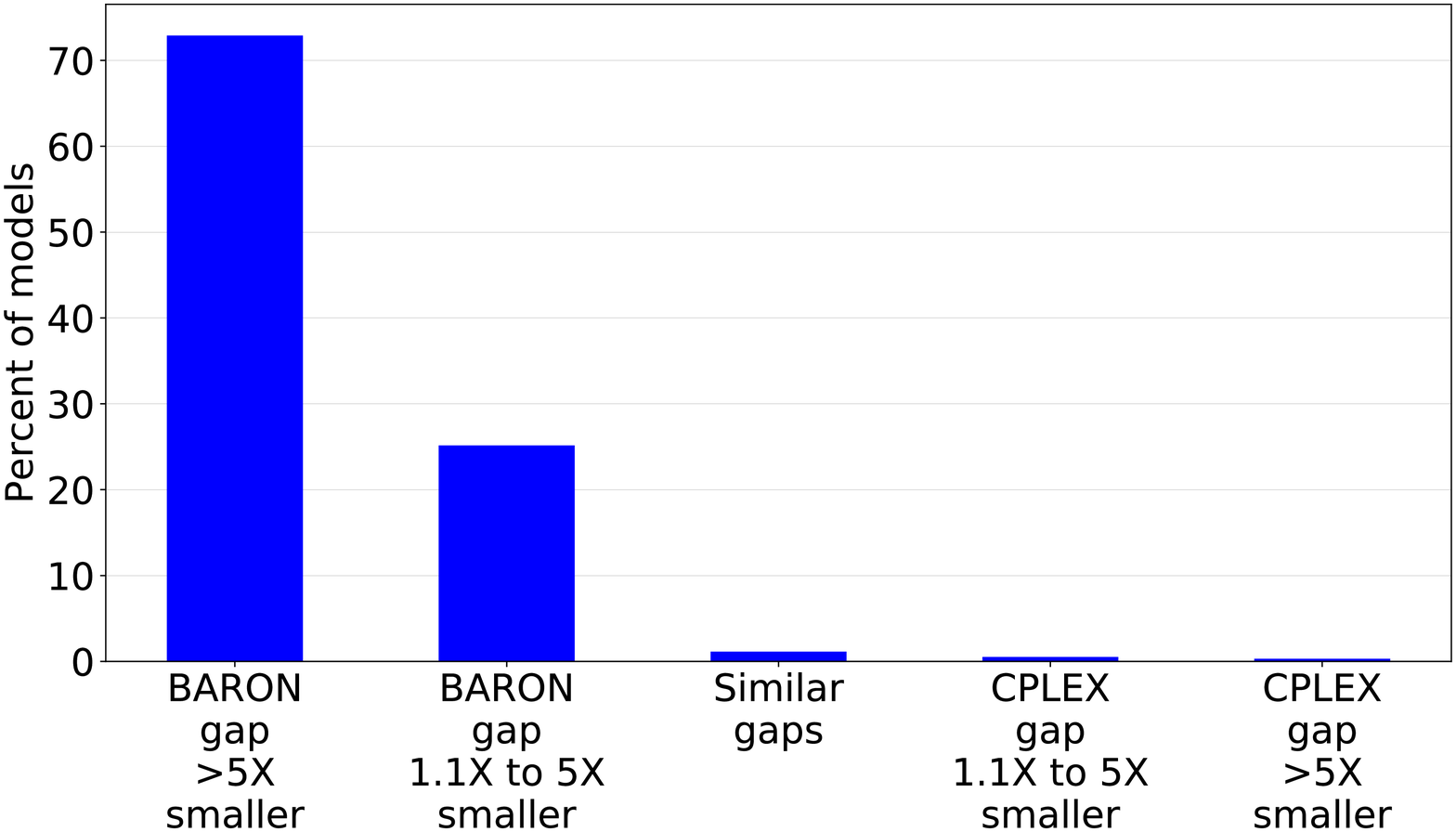}%
}
\caption{One-to-one comparison between BARON and CPLEX.}
\label{fig:baron_vs_cplex}
\end{figure}

Finally, we present a one-to-one comparison between BARON and GUROBI. Once again, we eliminate from the test set all the problems that can be solved trivially by both solvers (183 instances), which leads to a new test set with 1368 instances. In Figure~\ref{fig:baron_vs_gurobi_cpu_times}, we consider the nontrivial problems that are solved to global optimality by at least one of the two solvers (391 instances), whereas in Figure~\ref{fig:baron_vs_gurobi_gaps}, we consider nontrivial problems that neither of the two solvers are able to solve to global optimality within the time limit (977 instances). For more than 80\% of the instances considered in Figure~\ref{fig:baron_vs_gurobi_cpu_times}, BARON is at least 1.1 faster than GUROBI, whereas for nearly 90\% of the instances considered in Figure~\ref{fig:baron_vs_gurobi_gaps}, BARON terminates with considerably smaller gaps than GUROBI.

\begin{figure}[htp]
\centering
\subfloat[CPU times (391 nontrivial instances). \label{fig:baron_vs_gurobi_cpu_times}]{%
  \includegraphics[scale=0.17]{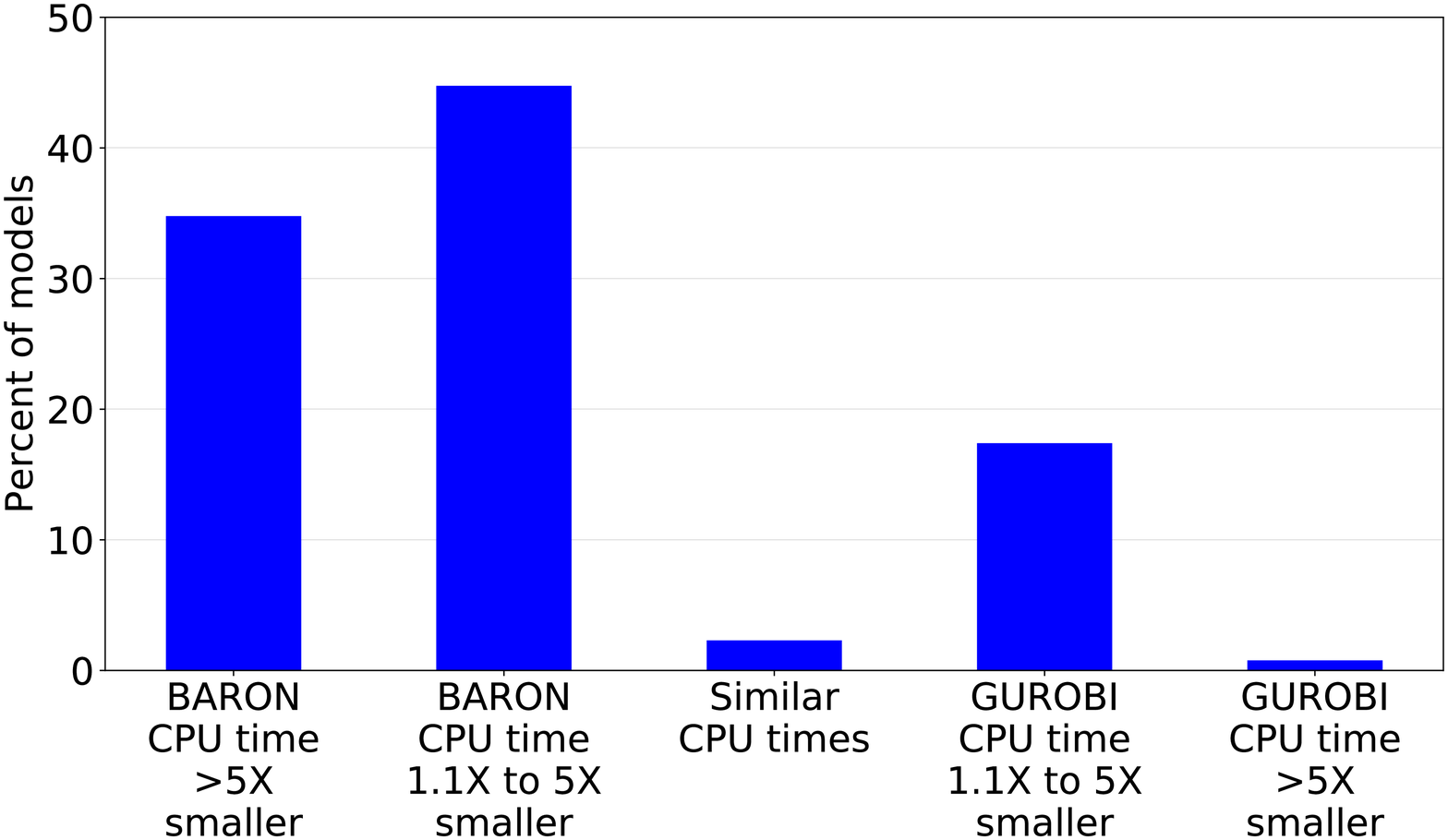}%
}
\subfloat[Relative gaps (977 nontrivial instances). \label{fig:baron_vs_gurobi_gaps}]{%
  \includegraphics[scale=0.17]{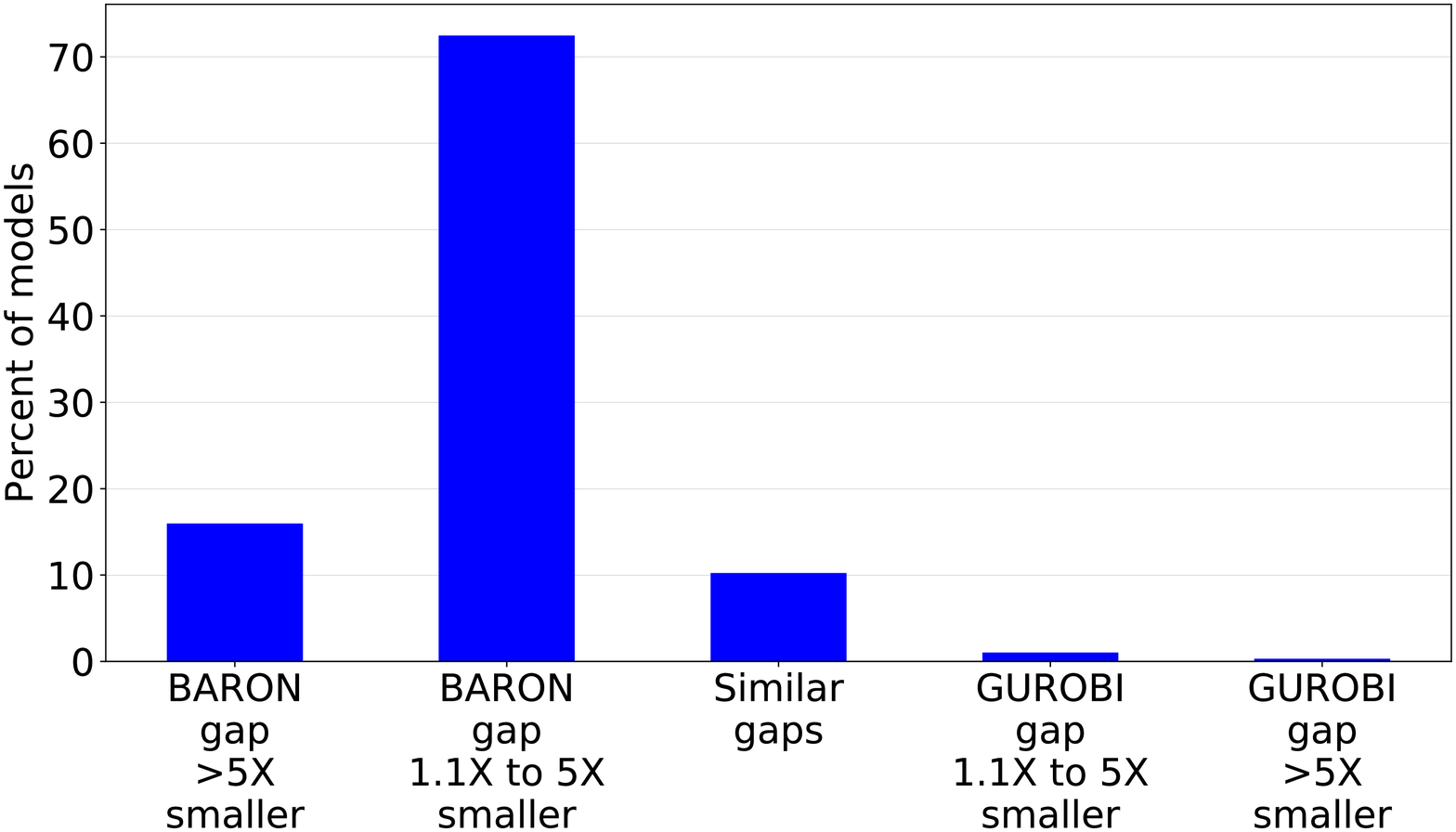}%
}
\caption{One-to-one comparison between BARON and GUROBI.}
\label{fig:baron_vs_gurobi}
\end{figure}

\section{Conclusions}
\label{conclusions}

We considered the global optimization of nonconvex MIQPs with linear constraints. We introduced a family of convex quadratic relaxations which are constructed via quadratic cuts. We investigated the theoretical properties of these relaxations and showed that they are an outer-approximation of a semi-infinite convex program which under certain conditions is equivalent to a semidefinite program. To assess the benefits of our approach, we incorporated the proposed relaxation techniques into the global optimization solver BARON, and tested our implementation on a large collection of problems. Results demonstrated that, for our test problems, our implementation leads to a very significant improvement in the performance of BARON.
\begin{appendices}
\section{Barrier  coordinate  minimization  algorithm used to solve the nonsmooth regularized separation problem}
\label{appendix}

In this appendix, we briefly describe the algorithm proposed by Dong~\cite{d:16} to solve the nonsmooth regularized separation problem~\eqref{separation_da_nonsmooth_reg}. We then describe our implementation of this algorithm. The algorithm operates on the following penalized log-det problem
\begin{equation}
\label{separation_da_nonsmooth_reg_log_det}
\begin{array}{cl}
\underset{d \in \R^n}{\text{inf}}\;\; & h(d; \sigma) := \sum\limits_{i = 1}^{n} r_i(d_i) - \sigma \text{log-det}\left(Q + \text{diag}(d) + \alpha A^T A \right) \\
  \st\;\;      & Q + \text{diag}(d) + \alpha A^T A \succ 0
\end{array}
\end{equation}
where $r_i(d_i) = \beta_i d_i$ for $d_i > 0$, $r_i(d_i) = \eta_i d_i$ for $d_i \leq 0$, $\beta_i = \eta_i + \lambda, \; \forall i \in [n]$, and $\sigma >0$. Each iteration of this algorithm involves the update of a feasible vector $\bar{d}$ and an inverse matrix $V:= {\left[ Q + \text{diag}(\bar{d}) + \alpha A^T A \right]}^{-1}$. Based on the optimality condition for~\eqref{separation_da_nonsmooth_reg_log_det}, this algorithm performs coordinate minimization by choosing an index $i$ determined as:
\begin{equation}
\label{index_choice_nonsmooth}
\begin{array}{cl}
i = \text{arg}\underset{j = 1, \dots, n}{\text{max}} \left\lbrace \left| {s(\bar{d})}_j \right| \right\rbrace, \;\; \text{with} \;\; s(\bar{d}) = \text{arg}\underset{u \in \R^n}{\text{min}} \left\lbrace {\Vert u \Vert}_2 \, : \, u \in \partial h(\bar{d}; \sigma) \right\rbrace.
\end{array}
\end{equation}
where $\partial h(\bar{d}; \sigma)$ is the subdifferential of $h(\bar{d}; \sigma)$. This choice of $i$ leads to a one-dimensional minimization problem 
similar to~\eqref{one_dimensional_problem} but involving $h(\bar{d} + \Delta d_i e_i; \sigma)$. This problem can be solved analytically to obtain the following formula for $\Delta d_i^*$ (see Section 4 in~\cite{d:16} for details):
\begin{equation}
\label{delta_di_nonsmooth}
\Delta d_i^* =
\left\{
\begin{array}{ll}
\frac{\sigma}{\beta_i} - \frac{1}{V_{ii}},\;
&{\rm if}\; -\bar{d}_i < - \frac{1}{V_{ii}} \;\;\; \text{or} \;\;\; \sigma \frac{V_{ii}}{1 - \bar{d}_i V_{ii}} > \beta_i,\\
-\bar{d}_i,\;
&{\rm if}\; -\bar{d}_i \geq - \frac{1}{V_{ii}} \;\;\; \text{and} \;\;\; \eta_i \leq \sigma \frac{V_{ii}}{1 - \bar{d}_i V_{ii}} \leq \beta_i,\\
\frac{\sigma}{\eta_i} - \frac{1}{V_{ii}},\;
&{\rm if}\; -\bar{d}_i \geq - \frac{1}{V_{ii}} \;\;\; \text{and} \;\;\; \sigma \frac{V_{ii}}{1 - \bar{d}_i V_{ii}} < \eta_i.
\end{array}\right.
\end{equation}
After calculating $\Delta d_i^*$ according to~\eqref{delta_di_nonsmooth}, $\bar{d}$ and $V$ are updated using~\eqref{update_d} and~\eqref{update_V}, respectively. Once~\eqref{separation_da_nonsmooth_reg_log_det} has been solved within a given precision, the penalty parameter $\sigma$ is adjusted through a rule similar to~\eqref{update_sigma}:
\begin{equation}
\label{update_sigma_nonsmooth}
\begin{aligned}
\sigma \leftarrow \max\{\sigma_{\text{min}}, \sigma_{\text{upd}} \cdot \sigma \} \;\;\;\; \text{if} \;\;\;\; \dfrac{ {\Vert s(\bar{d}) \Vert}_2 }{ {\Vert \beta \Vert}_2 } \leq \epsilon_{\text{upd}}
\end{aligned}
\end{equation}
where $s(\bar{d})$ is used as a measure of optimality. The relative improvement in the objective function of~\eqref{separation_da_nonsmooth_reg} is checked every $\omega_{\text{check}}  n$ iterations, and the algorithm terminates if this relative improvement is smaller than $\epsilon_{\text{check}}$.  

The entire procedure is summarized in Algorithm~\ref{alg:coordinate_minimization_nonsmooth}. In our implementation of this algorithm, we use an initial perturbation $\hat{d} = -1.5 \lambda_{\text{min}} (Q + \alpha A^T A)  \mathbbm{1}$. We set the following parameters by using the values recommended in~\cite{d:16}: $\lambda = \sum_{i = 1}^{n} \eta_i$, $\sigma_{\text{min}} = 10^{-5}$, $\sigma_{\text{upd}} = 0.8$, $\epsilon_{\text{upd}} = 0.03$. We use $\text{MaxIter} = 500 n$, $\omega_{\text{check}} = 10$ and $\epsilon_{\text{check}} = 10^{-4}$. The initial value of $\sigma_{\text{init}}$ is determined as:
\begin{equation}
\label{sigma_formula_nonsmooth}
\begin{aligned}
\sigma_{\text{init}} = {{\text{median}} \left\lbrace \left| \dfrac{u_i}{V_{ii}} \right| \right\rbrace}_{i = 1}^{n}
\end{aligned}
\end{equation}
where $ u_i \in \partial r_i(\hat{d}_i)$, and $\partial r_i(\hat{d}_i)$ is the subdifferential of $r_i(\hat{d}_i)$.

\begin{algorithm}
\caption{Barrier coordinate minimization algorithm used to solve the nonsmooth regularized separation problem~\eqref{separation_da_nonsmooth_reg}}
\label{alg:coordinate_minimization_nonsmooth}
\begin{algorithmic}[1]
\STATE \textbf{Input}: $Q$, $A$, $\alpha$, and an optimal solution $(\bar{x}, \bar{y}, \bar{v})$ to~\eqref{qcp_das}.
\STATE \textbf{Output}: A vector $\bar{d}$ that solves~\eqref{separation_da_nonsmooth_reg}.
\STATE Set $\lambda = \sum_{i = 1}^{n} \eta_i$.
\STATE Set $\bar{d} = \hat{d}$, where $\hat{d} = 1.5 \mu \mathbbm{1}$ and $\mu = -\lambda_{\text{min}} (Q + \alpha A^T A)$.
\STATE Set $\sigma = \sigma_{\text{init}}$, where $\sigma_{\text{init}}$ is calculated according to~\eqref{sigma_formula_nonsmooth}.
\STATE Calculate $V = {\left[ Q + \text{diag}(\bar{d}) + \alpha A^T A \right]}^{-1}$ and set $k = 0$.
\WHILE{$(k < \text{MaxIter})$}
\STATE Update $k \leftarrow k + 1$.
\STATE Determine an index $i$ according to~\eqref{index_choice_nonsmooth} and calculate $\Delta d_i^*$ using~\eqref{delta_di_nonsmooth}.
\STATE Update $\bar{d}$ according to~\eqref{update_d}.
\STATE Update $V$ according to~\eqref{update_V}.
\STATE Adjust $\sigma$ according to~\eqref{update_sigma_nonsmooth}.
\IF{$(k \bmod (\omega_{\text{check}} n) = 0)$}
\STATE Terminate if the improvement in the objective of~\eqref{separation_da_nonsmooth_reg} is smaller than $\epsilon_{\text{check}}$.
\ENDIF
\ENDWHILE
\end{algorithmic}
\end{algorithm}

\end{appendices}

\bibliographystyle{spmpsci}      
\bibliography{carlos,reference}   

%
%

\end{document}